\newtheorem{theorem}{Theorem}[section]
\newtheorem{corollary}[theorem]{Corollary}
\newtheorem{lemma}[theorem]{Lemma}
\newtheorem{problem}[theorem]{Problem}
\newtheorem{proposition}[theorem]{Proposition}
\newtheorem{remark}[theorem]{Remark}
\def\pair#1#2{\langle#1,#2\rangle}
\def\ip#1#2{(#1|#2)}
\def\E{{\widehat{E}}}
\def\J#1#2#3{ \left\{ #1,#2,#3 \right\} }
\def\Jc#1#2#3{ \left\{ #1,#2,#3 \right\} }
\def\NN{{\mathbb{N}}}
\def\11{\textbf{$1$}}
\def\CC{{\mathbb{C}}}
\begin{document}

\title{Ternary Weakly Amenable C$^*$-algebras and JB$^*$-triples}

\author[T. Ho]{Tony Ho}
\email{zh\underline{\ }ho01@yahoo.com}
\address{13416 Sheridan Ave,
Urbandale, IA  50323}

\author[A.M. Peralta]{Antonio M. Peralta}
\email{aperalta@ugr.es}
\address{Departamento de An{\'a}lisis Matem{\'a}tico, Facultad de
Ciencias, Universidad de Granada, 18071 Granada, Spain}

\author[B. Russo]{Bernard Russo}
\email{brusso@uci.edu}
\address{Department of Mathematics, University of California, Irvine CA, USA}

\thanks{Second author partially supported by
D.G.I. project no. MTM2008-02186, and Junta de Andaluc\'{\i}a
grants FQM0199 and FQM3737.}

\date{}

\begin{abstract}
A well known result of  Haagerup from 1983 states that every C$^*$-algebra, $A$, is
weakly amenable, that is, every (associative) derivation from $A$ into its dual is inner.
A Banach algebra, $B$, is said to be ternary weakly amenable if every continuous Jordan triple derivation
from $B$ into its dual is inner. We show that commutative C$^*$-algebras are ternary weakly amenable,
but that $B(H)$ and $K(H)$ are not, unless $H$ is finite dimensional. More generally, we inaugurate
the study of weak amenability for Jordan Banach triples, focussing on commutative JB$^*$-triples
and some Cartan factors.
\end{abstract}

\maketitle
 \thispagestyle{empty}


\section{Introduction}

Two fundamental questions concerning derivations from a Banach algebra $A$ into a Banach $A$-bimodule $M$ are:
\begin{itemize}
\item Is an everywhere defined derivation automatically continuous?
\item Are all continuous derivations inner?
If not, can every continuous derivation be approximated by inner derivations?
\end{itemize}

One can ask the same questions in the setting of Jordan Banach algebras (and Jordan modules),
and more generally for Jordan Banach triple systems (and Jordan Banach triple modules).
Significant special cases occur in each context when $M=A$ or when $M=A^*$.\smallskip

In order to obtain a better perspective on the objectives of this paper, we shall give here a comprehensive review of the major existing results on these two problems in the contexts in which we will be interested, namely, C$^*$-algebras, JB$^*$-algebras, and JB$^*$-triples.
Although we will be dealing with both the real and complex cases in this paper, in the interest of space this review will be confined to the complex case.
\smallskip

A derivation on a Banach algebra, $A,$ into a Banach $A$-bimodule, $M,$ is a linear mapping
$D:A\rightarrow M$ such that $D(ab)=a\cdot D(b) + D(a)\cdot b$. An inner derivation, in this context,
is a derivation of the form:  $\hbox{ad}_x (a)=x\cdot a-a\cdot x$ ($x\in M$, $a\in A$).\smallskip

In the context of C$^*$-algebras, automatic continuity results were
initiated by Kaplansky before 1950 (see \cite{Kaplansky58}) and culminated in the following series of results:
Every derivation from a C$^*$-algebra into itself is continuous (Sakai, 1960 \cite{Sak60}); Every derivation from a
C$^*$-algebra $A$  into a Banach $A$-bimodule is continuous (Ringrose, 1972 \cite{Ringrose72}).\smallskip

The major results for C$^*$-algebras regarding inner derivations read as follows: Every derivation from a
C$^*$-algebra on a Hilbert space $H$ into itself is of the form $x\mapsto ax-xa$ for some $a$ in the weak closure of the
C$^*$-algebra in $L(H)$ (Sakai \cite{sakai}, Kadison \cite{Kad}, 1966); Every amenable
C$^*$-algebra is nuclear (Connes, 1976 \cite{connes}); Every nuclear C$^*$-algebra is amenable (Haagerup, 1983 \cite{Haa83});
Every C$^*$-algebra is weakly amenable (Haagerup, 1983 \cite{Haa83} and Haagerup-Laustsen, 1998 \cite{HaaLaust}).
For finite dimensional C$^*$-algebras, the last result follows from the work of Hochschild  in 1942 \cite{Hochs}.
 \smallskip

As a bridge to the Jordan algebra setting, we make a slight digression.
Sinclair proved in 1970 (cf. \cite{Sinclair70}) that a continuous Jordan
derivation from a semisimple Banach algebra to itself is a
derivation, although this result fails for derivations of
semisimple Banach algebras into a Banach bi-module.  (A {\it Jordan derivation} from a Banach algebra $A$ into a Banach
$A$-module is a linear map $D$ satisfying $D(a^2) = a D(a) + D(a)
a,$ ($a\in A$), or equivalently, $D(ab+ba)=aD(b)+D(b)a + D(a)b
+bD(a),$ ($a,b\in A$).)
Nevertheless, Johnson proved in 1996 (cf. \cite{John96}) that every continuous Jordan derivation from a
C$^*$-algebra $A$ to a Banach $A$-bimodule is a
derivation.  A new proof of this fact was
presented by U. Haagerup and N.J. Laustsen in \cite{HaaLaust}.\smallskip

The following  subsequent result partially  removed the assumption of continuity from this theorem of Johnson: Every Jordan derivation from a
von Neumann algebra, or from a commutative C$^*$-algebra, into a Banach bimodule  is continuous (Alaminos-Bre\v{s}ar-Villena 2004 \cite{AlBreVill}).
More recently, the assumption was completely removed: Every Jordan derivation from an arbitrary
C$^*$-algebra  into a Banach bimodule  is continuous (Peralta-Russo, 2010 \cite[Cor. 22]{PeRu}).
Earlier, Cusack, completing a study of Sinclair, showed that every Jordan derivation on a semisimple Banach algebra is continuous \cite{Cusack75},
and Villena extended this result to semisimple Jordan Banach algebras \cite{Villena96}.\smallskip

We now move to the context of Jordan Banach algebras. A derivation from a Jordan Banach algebra
$A$ into a Jordan Banach module $M$ is a linear mapping $D:A\rightarrow M$ such that $D(a\circ b)=a\circ Db+Da\circ b$,
where $\circ$ denotes both the product in the Jordan algebra and the module action.
(Jordan Banach algebras and Jordan Banach modules will be defined below.) An inner derivation in this context is a
derivation of the form:  $\sum_{i=1}^{m} \left(L(x_i)L(a_i)-L(a_i)L(x_i)\right)$, ($x_i\in M, a_i\in A$).
Here, $L(x)$ is the operator $a\mapsto a\circ x$ from $A$ to $M$ and $L(a)$ is either the operator $b\mapsto b\circ a$
from $A$ to $A$ or $x\mapsto a\circ x$ from $M$ to $M$. \smallskip

In the context of JB$^*$-algebras, the major automatic continuity results consist of the following.
Every (Jordan) derivation of a reversible JC$^*$-algebra extends to a derivation (associative)
of its enveloping C$^*$-algebra (Upmeier, 1980 \cite{upmeier}---this recovers
Sinclair's result in the case of C$^*$-algebras);
Every Jordan derivation from a JB$^*$-algebra $A$ into $A$ or into $A^*$ is continuous and
every Jordan derivation from a commutative or a compact C$^*$-algebra into a Jordan Banach bimodule,
is continuous (Hejazian-Niknam, 1996 \cite{HejNik96}).
This latter result was also extended to arbitrary C$^*$-algebras in \cite[Cor. 21]{PeRu}:
Every Jordan derivation from an arbitrary C$^*$-algebra $A$ into a Jordan Banach $A$-bimodule is continuous.\smallskip

The major results for JB$^*$-algebras regarding inner derivations are the following: Every Jordan derivation from a finite dimensional JB$^*$-algebra
into a Jordan Banach module is inner (follows from Jacobson, 1951 \cite{Jacobson51}, \cite{Jac}); Every Jordan derivation of a purely exceptional or a reversible JBW-algebra is inner (Upmeier, 1980 \cite{upmeier}); Every Jordan derivation of $\oplus L^\infty(S_j,U_j)$ ($U_j$ spin factors) is inner if and only if $\sup_j\dim U_j<\infty$, \cite{upmeier}. By a structure theorem for JBW-algebras, these theorems of Upmeier completely determine whether a given JBW-algebra has only inner derivations.\smallskip

Finally, we move to a discussion of Jordan Banach triples, which is the proper setting for this paper. A (triple or ternary) derivation on a Jordan Banach triple $A$ into a Jordan Banach triple module $M$ is a {\it conjugate} linear mapping $D:A\rightarrow M$ such that  $D\{a,b,c\}=\{Da,b,c\}+\{a,Db,c\}+\{a,b,Dc\}$. An
inner derivation in this context is a derivation of the form:   $\sum_i^{m} \left(L(x_i,a_i)-L(a_i,x_i)\right)$, ($x_i\in M, a_i\in A$), where $L(x,a)$ and $L(a,x)$ denote the maps $b\mapsto \J{x}{a}{b}$ and $b\mapsto\J{a}{x}{b}$ arising from the module action. (Jordan Banach triple and Jordan Banach triple module will be defined below, after which the reason for the conjugate linearity in the complex case of derivations into a module, as opposed to linearity,  will be more transparent.)\smallskip

In the context of JB$^*$-triples, automatic continuity results were initiated by Barton and Friedman in 1990 (cf.\  \cite{BarFri}) who showed that every triple derivation of a JB$^*$-triple is continuous.  Peralta and Russo in 2010 (see \cite[Theorem 13]{PeRu}) gave necessary and sufficient conditions under which a derivation of a JB$^*$-triple into a Jordan Banach triple module is continuous. As shown in \cite{PeRu}, these conditions are automatically satisfied in the case that the JB$^*$-triple is actually a C$^*$-algebra with the triple product $(xy^*z+zy^*x)/2$, leading to a new proof (cf. \cite[Cor. 23]{PeRu}) of the theorem of Ringrose quoted above as well as the results of Alaminos-Bre\v{s}ar-Villena and Hejazian-Niknam, also quoted above.\smallskip

The known results for JB$^*$-triples regarding inner derivations are surveyed in the following statements: Every derivation from a finite dimensional JB$^*$-triple into itself is inner (follows from Meyberg, 1972 \cite{Meyberg72}); Every derivation from a finite dimensional JB$^*$-triple into a Jordan Banach triple module is inner (follows from K\"uhn-Rosendahl, 1978 \cite{KR78}); Every derivation of a Cartan factor of type $I_{n,n}$ ($n$ finite or infinite), type II (with underlying Hilbert space of even or infinite dimension) or type III is inner (Ho-Martinez-Peralta-Russo, 2002 \cite{HoMarPeRu}); Infinite dimensional Cartan factors of type $I_{m,n}, m\ne n$, and type IV have derivations into themselves which are not inner (cf. \cite{HoMarPeRu}).\smallskip

It is worth noting that, besides the consequences for C$^*$-algebras of the main result of \cite{PeRu} noted above, another consequence is the automatic continuity of derivations of a JB$^*$-triple into its dual \cite[Cor. 15]{PeRu}, leading us to the study of weak amenability for JB$^*$-triples,
which is the main focus of this paper. \smallskip

We conclude this review introduction by describing the contents of this paper. Section 2 sets down the definitions and basic properties of Jordan triples, Jordan triple modules, derivations and (ternary) weak amenability that we shall use.  Sections 3 and 4 are concerned  with C$^*$-algebras, considered as JB$^*$-triples with the triple product $(xy^*z+zy^*x)/2$. It is proved that commutative C$^*$-algebras are ternary weakly amenable, and that the compact operators, as well as all bounded operators on a Hilbert space $H$ are ternary weakly amenable if and only if $H$ is finite dimensional.\smallskip

Sections 5 and 6 are concerned with more general JB$^*$-triples.  It is proved that certain Cartan factors (Hilbert spaces and spin factors) are ternary weakly amenable if and only if they are finite dimensional, that infinite dimensional finite rank Cartan factors of type 1 are not ternary weakly amenable,  and that commutative JB$^*$-triples are almost weakly amenable in the sense that the inner derivations into the dual are norm dense in the set of all derivations into the dual.  By comparison, the existing forerunners on approximation of derivations on C$^*$-algebras by inner derivations (immediate consequence of the Sakai-Kadison results (\cite{Kad},\cite{sakai}), JB$^*$-algebras (\cite{upmeier}), and JB$^*$-triples (\cite{BarFri})  involved the topology of pointwise convergence and not the norm topology.

The authors wish to acknowledge the helpful comments and suggestions made by the referee.

\section{Derivations on Jordan triples; Jordan triple modules}

\subsection{Jordan triples}

A complex (resp., real) \emph{Jordan triple} is a complex (resp., real)
vector space $E$ equipped with a non-trivial  triple
product $$ E \times E \times E \rightarrow E$$
$$(xyz) \mapsto \J xyz $$
which is bilinear and symmetric in the outer variables and
conjugate linear (resp., linear) in the middle one satisfying the
so-called \emph{``Jordan Identity''}:
$$L(a,b) L(x,y) -  L(x,y) L(a,b) = L(L(a,b)x,y) - L(x,L(b,a)y),$$
for all $a,b,x,y$ in $E$, where $L(x,y) z := \J xyz$. When $E$ is
a normed space and the triple product of $E$ is continuous, we say
that $E$ is a \emph{normed Jordan triple}. If a normed Jordan
triple $E$  is complete with respect to the norm (i.e. if $E$ is a
Banach space), then it is called a
\emph{Jordan Banach triple}. Every normed Jordan triple can be
completed in the usual way to become a Jordan Banach triple.
Unless otherwise specified, the term ``normed Jordan triple''
(resp., ``Jordan Banach triple'') will always mean a real or
complex normed Jordan triple (resp., a real or complex
Jordan Banach triple).\smallskip


A subspace $F$ of a Jordan triple $E$ is said to be a
\emph{subtriple} if $\J FFF \subseteq F$. We recall that a
subspace $J$ of $E$ is said to be a \emph{triple ideal} if
$\{E,E,J\}+\{E,J,E\} \subseteq J.$ When $\J JEJ \subseteq J$ we say
that $J$ is an \emph{inner ideal} of $E$.\smallskip

A real (resp., complex) \emph{Jordan algebra} is a
(non-necessarily associative) algebra over the real (resp.,
complex) field whose product is abelian and satisfies $(a \circ
b)\circ a^2 = a\circ (b \circ a^2)$. A normed Jordan algebra is a
Jordan algebra $A$ equipped with a norm, $\|.\|$, satisfying $\|
a\circ b\| \leq \|a\| \ \|b\|$, $a,b\in A$. A \emph{Jordan Banach
algebra} is a normed Jordan algebra whose norm is
complete.\smallskip

A Jordan algebra is called {\it special} if it is isomorphic to a subspace of an associative
algebra which is closed under $ab+ba$.
Every Jordan algebra is a Jordan triple with respect to
$$\J abc := (a\circ b) \circ c + (c\circ b) \circ a - (a\circ c) \circ b.$$

If a  Jordan triple arises from a special Jordan algebra, then the
triple product reduces to $\J abc = \frac12 (a bc +cba)$.
Thus, every real or complex associative Banach algebra (resp., Jordan
Banach algebra) is a real Jordan-Banach triple with respect to the
product $\J abc = \frac12 (a bc +cba)$ (resp., $\J abc = (a\circ
b) \circ c + (c\circ b) \circ a - (a\circ c) \circ b$).\smallskip

A real or complex Jordan-Banach triple $E$ is said to be \emph{commutative}
or \emph{abelian} if the identity $$ \J {\J xyz}ab = \J xy{\J zab} = \J x{\J yza}b$$ holds for all
$x,y,z,a,b \in E$, equivalently, $L(a,b) L(c,d) = L(c,d) L(a,b)$, for every $a,b,c,d\in E$.\smallskip

A JB$^*$-algebra is a complex Jordan Banach algebra $A$ equipped
with an algebra involution $^*$ satisfying  $\|\J a{a^*}a \|= \|a\|^3$, $a\in
A$.  (Recall that $\J a{a^*}a  =
 2 (a\circ a^*) \circ a - a^2 \circ a^*$.)\smallskip

A \emph{(complex) JB$^*$-triple} is a complex Jordan Banach triple
${E}$ satisfying the following axioms: \begin{enumerate}[($JB^*
1$)] \item For each $a$ in ${E}$ the map $L(a,a)$ is an hermitian
operator on $E$ with non negative spectrum; \item  $\left\|
\{a,a,a\}\right\| =\left\| a\right\| ^3$ for all $a$ in ${A}.$
\end{enumerate}\smallskip

Every C$^*$-algebra (resp., every JB$^*$-algebra) is a JB$^*$-triple with respect to the product
$\J abc = \frac12 \ ( a b^* c + cb^* a) $ (resp., $\J abc := (a\circ b^*) \circ c + (c\circ b^*) \circ a - (a\circ c) \circ b^*$).\smallskip

A summary of the basic facts about JB$^*$-triples, an important and well-understood
class of Jordan Banach triples, some of which are recalled here, can be
found in \cite{Russo94} and some of the references therein, such
as \cite{Ka}, \cite{FriRu85}, \cite{FriRus86bis}, \cite{U1} and \cite{U2}.\smallskip

We recall that a \emph{real JB$^*$-triple} is a norm-closed real
subtriple of a complex JB$^*$-triple (compare
\cite{IsKaRo95}). The class of real JB$^*$-triples includes all complex JB$^*$-triples,
all real and complex C$^*$- and JB$^*$-algebras and all JB-algebras.\smallskip

A complex (resp., real) \emph{JBW$^*$-triple} is a complex (resp.,
real) JB$^*$-triple which is also a dual Banach space (with a
unique isometric predual \cite{BarTi,MarPe}). It is known that the
triple product of a real or complex JBW$^*$-triple is separately weak$^*$
continuous (cf. \cite{BarTi} and \cite{MarPe}). The second dual
of a JB$^*$-triple $E$ is a JBW$^*$-triple with a product
extending the product of $E$ \cite{Di86b,IsKaRo95}.\smallskip

JB-algebras are precisely the self adjoint parts
of JB$^*$-algebras \cite{Wright77}, and a JBW-algebra is a JB-algebra which is
a dual space.\smallskip

When $E$ is a (complex) JB$^*$-triple or a real JB$^*$-triple, a
subtriple $I$ of $E$ is a triple ideal if and only if $\J EEI
\subseteq I$ or $\J EIE \subseteq I$
 or $\J EII\subseteq I$ (compare \cite[Proposition 1.3]{BuChu}).\smallskip

\subsection{Jordan triple modules}

Let $A$ be an associative algebra. Let us recall that an
\emph{$A$-bimodule} is a vector space $X$, equipped with two
bilinear products $(a,x)\mapsto a x$ and $(a,x)\mapsto x a$ from
$A\times X$ to $X$ satisfying the following axioms: $$a (b x) = (a
b) x ,\ \ a (x b) = (a x) b, \hbox{ and, }(xa) b = x (a b),$$ for
every $a,b\in A$ and $x\in X$.\smallskip

Let $A$ be a Jordan algebra. A \emph{Jordan $A$-module} is a
vector space $X$, equipped with two bilinear products
$(a,x)\mapsto a \circ x$ and $(x,a)\mapsto x \circ a$ from
$A\times X$ to $X$, satisfying: $$a \circ x = x\circ a,\ \ a^2
\circ (x \circ a) = (a^2\circ  x)\circ a, \hbox{ and, }$$ $$2(
(x\circ a)\circ  b) \circ a + x\circ (a^2 \circ b) = 2 (x\circ
a)\circ  (a\circ b) + (x\circ b)\circ a^2,$$ for every $a,b\in A$
and $x\in X$. The space $A\oplus X$ is a Jordan algebra with respect
to the product $$(a,x)\circ (b,y) := (a\circ b, a \circ y + b\circ x).$$
The Jordan algebra $(A\oplus X, \circ)$ is called the Jordan split null
extension of $A$ and $X$ (compare \cite[\S II.5,p.82]{Jac}).
When $A$ is a Jordan Banach algebra, $X$ is
a Banach space and the mapping $A\times X \to X$, $(a,x) \mapsto a\circ x$ is continuous,
then $X$ is said to be a Jordan Banach module.  The Jordan split null
extension is never a $JB$-algebra since $(0,x)^2=0$. \smallskip

Let $E$ be a complex (resp. real) Jordan triple.  A \emph{Jordan
triple $E$-module}  (also called \emph{triple $E$-module}) is a
vector space $X$ equipped with three mappings $$\{.,.,.\}_1 :
X\times E\times E \to X, \quad \{.,.,.\}_2 : E\times X\times E \to
X$$ $$ \hbox{ and } \{.,.,.\}_3: E\times E\times X \to X$$
satisfying  the following axioms:
\begin{enumerate}[{$(JTM1)$}]
\item $\{ x,a,b \}_1$ is linear in $a$ and $x$ and conjugate
linear in $b$ (resp., trilinear), $\{ abx \}_3$ is linear in $b$
and $x$ and conjugate linear in $a$ (resp., trilinear) and
$\{a,x,b\}_2$ is conjugate linear in $a,b,x$ (resp., trilinear)
\item  $\{ x,b,a \}_1 = \{ a,b,x \}_3$, and $\{ a,x,b \}_2  = \{
b,x,a \}_2$  for every $a,b\in E$ and $x\in X$. \item Denoting by
$\J ...$ any of the products $\{ .,.,. \}_1$, $\{ .,.,. \}_2$ or
$\{ .,.,. \}_3$, the identity $\J {a}{b}{\J cde} = \J{\J abc}de $
$- \J c{\J bad}e +\J cd{\J abe},$ holds whenever one of the
elements  $a,b,c,d,e$ is in $X$ and the rest are in $E$.
\end{enumerate}

It is obvious that every real or complex Jordan triple $E$ is a
{\it real}  triple $E$-module.  It is problematical whether
every complex Jordan triple $E$ is a complex triple $E$-module for
a suitable triple product.  This is partly why we have defined (in section 1 and subsection 2.3) a derivation of a complex
JB$^*$-triple into a Jordan Banach triple module to be conjugate linear.\smallskip

When $E$ is a Jordan Banach triple and $X$ is a triple $E$-module
which is  also a Banach space and, for each $a,b$ in $E$, the
mappings $x\mapsto \J abx_3$ and $x\mapsto \J axb_2$ are
continuous, we shall say that $X$ is a triple $E$-module with
\emph{continuous module operations}. When the products $\{ .,.,.
\}_1$, $\{ .,.,. \}_2$ and $\{ .,.,. \}_3$ are (jointly)
continuous we shall say that $X$ is a \emph{Banach (Jordan) triple
$E$-module}. \smallskip

Hereafter, the triple products $\J \cdot\cdot\cdot_j$, $ j=1,2,3$,
will be simply denoted by $\J \cdot\cdot\cdot$ whenever the
meaning is clear from the context.
\smallskip

Every (associative) Banach $A$-bimodule (resp., Jordan Banach $A$-module)
$X$ over an associative Banach algebra $A$ (resp., Jordan Banach algebra $A$)
is a real Banach triple $A$-module (resp., $A$-module) with respect to the ``\emph{elementary}''
product $$\J abc := \frac12 ( a b c+cba)$$ (resp., $\J abc= (a\circ b) \circ c + (c\circ b) \circ a
- (a\circ c) \circ b$), where one element of $a,b,c$ is in $X$ and the other two are in $A$.\smallskip

It is easy but  laborious to check that the dual space,
$E^*$, of a complex (resp., real) Jordan Banach triple $E$ is  a
complex (resp., real) triple $E$-module with respect to the
products: \begin{equation}\label{eq module product dual 1}   \J
ab{\varphi} (x) = \J {\varphi}ba (x) := \varphi \J bax
\end{equation} and \begin{equation}\label{eq module product dual
2} \J a{\varphi}b (x) := \overline{ \varphi \J axb }, \forall x\in
X, a,b\in E. \end{equation}

Given a triple $E$-module $X$  over a Jordan triple $E$, the
space $E\oplus X$ can be equipped with a structure of real Jordan
triple with respect to the product $\J {a_1+x_1}{a_2+x_2}{a_3+x_3}
= \J {a_1}{a_2}{a_3} +\J  {x_1}{a_2}{a_3}+\J  {a_1}{x_2}{a_3} + \J
{a_1}{a_2}{x_3}$. Consistent with the terminology in \cite[\S
II.5]{Jac}, $E\oplus X$ will be called the \emph{triple split null extension}
of $E$ and $X$. It is never a $JB^*$-triple. \smallskip

A subspace $S$ of a triple $E$-module $X$ is said to be a
\emph{Jordan triple submodule} or a \emph{triple submodule} if and
only if $\J EES + \J ESE \subseteq S$. Every triple ideal $J$ of
$E$ is a Jordan triple $E$-submodule of $E$.\smallskip

\subsection{Derivations}

Let $X$ be a Banach $A$-bimodule over an (associative) Banach algebra $A$. A linear mapping $D : A \to X$ is said to be
a \emph{(binary or associative) derivation} if $D(a b) = D(a) b + a D(b)$, for every $a,b$ in $A$. The symbol $\mathcal{D}_b(A,X)$
will denote the set of all continuous binary derivations from $A$ to $X$.
\smallskip

When $X$ is a Jordan Banach module over a Jordan Banach algebra $A$, a linear mapping $D : A \to X$ is said to be
a \emph{Jordan derivation} if $D(a \circ b) = D(a) \circ b + a \circ D(b)$, for every $a,b$ in $A$.
We denote the set of continuous Jordan derivations from $A$ to $X$ by  $\mathcal{D}_J(A,X)$. Although
Jordan derivations also are binary derivations, we use the word ``binary'' only for associative derivations.
\smallskip



In the setting of Jordan Banach triples, a \emph{triple} or \emph{ternary derivation} from a (real or complex)
Jordan Banach triple, $E,$ into a Banach triple $E$-module, $X$,
is a conjugate linear mapping $\delta: E \to X$ satisfying \begin{equation}\label{eq triple derivation} \delta \J abc =
\J {\delta (a)}bc +  \J a{\delta (b)}c + \J ab{\delta (c)},\end{equation} for every $a,b,c$ in $E$. The set of all continuous
ternary derivations from $E$ to $X$ will be denoted by $\mathcal{D}_t(E,X)$. According to \cite{BarFri} and \cite{HoMarPeRu},
a \emph{ternary derivation} on $E$ is a linear mapping $\delta : E\to E$ satisfying the identity $(\ref{eq triple derivation})$.
It should be remarked here that, unlike derivations from $E$ to itself, derivations from $E$ to $E^*$, when the latter is regarded as
a Jordan triple $E$-module, are defined to be conjugate linear maps (in the complex case). The words Jordan or ternary may seem
redundant in the expressions ``Jordan derivation on a Jordan algebra'' and ``ternary derivation on a
Jordan triple'', nevertheless,  we shall make use of them for clarity.\smallskip

If $E$ is a real or complex Jordan Banach triple,
we can easily conclude, from the Jordan identity,
that $\delta (a,b) := L(a,b)-L(b,a)$ is a
ternary derivation on $E$, for every $a,b\in E$. A {triple or ternary derivation}
$\delta$ on $E$ is said to be {\it inner} if it can be written as a finite sum of
derivations of the form $\delta(a,b)$ $(a,b\in E)$. Following \cite{BarFri} and \cite{HoMarPeRu},
we shall say that $E$ has the \emph{inner derivation property} if
every ternary derivation on $E$ is inner. The just quoted papers study the inner derivation property
in the setting of real and complex JB$^*$-triples.\smallskip

The following technical result will be needed later.

\begin{proposition}\label{p bidual extensions}
 Let $E$ be a real or complex JB$^*$-triple
and let $\delta : E \to E^*$ be a ternary derivation. Then $\delta^{**} : E^{**} \to E^{***}$
is a weak$^*$-continuous ternary derivation satisfying $\delta^{**} (E^{**}) \subseteq E^{*}$.
\end{proposition}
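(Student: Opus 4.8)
The plan is to analyze the bitranspose $\delta^{**}$ of a ternary derivation $\delta : E \to E^*$ and verify, in turn, that it is weak$^*$-continuous, that it remains a ternary derivation for the JBW$^*$-triple structure on $E^{**}$ and the natural module structure on $E^{***}$, and finally that its range lands inside the canonically embedded copy of $E^*$. The starting observation is standard Banach-space duality: for any bounded linear (here conjugate linear) operator $T : X \to Y$, the second transpose $T^{**} : X^{**} \to Y^{**}$ is automatically weak$^*$-to-weak$^*$ continuous and extends $T$ in the sense that it agrees with $T$ on the canonical image of $X$. Applying this with $T = \delta$, $X = E$, $Y = E^*$ gives at once that $\delta^{**} : E^{**} \to E^{***}$ is weak$^*$-continuous and restricts to $\delta$ on $E$.

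Next I would establish the derivation identity $(\ref{eq triple derivation})$ for $\delta^{**}$ on $E^{**}$. The mechanism here is separate weak$^*$-continuity: recall (as stated in the excerpt) that the triple product of the JBW$^*$-triple $E^{**}$ is separately weak$^*$-continuous in each variable, and that the module operations defining $E^{***}$ as a triple $E^{**}$-module are likewise weak$^*$-continuous in the appropriate slots. Fixing $b,c \in E$ and letting $a$ range over $E$, the identity holds by hypothesis; I would then fix $a \in E^{**}$, approximate it weak$^*$ by a net from $E$ (Goldstine), and pass to the weak$^*$-limit on both sides, using the continuity of $\delta^{**}$ together with separate weak$^*$-continuity of the products. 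Iterating this argument one variable at a time — extending first in the first variable, then the second, then the third — propagates the identity from $E \times E \times E$ to all of $E^{**} \times E^{**} \times E^{**}$. Care is needed with the conjugate-linearity and with which module product $\{\cdot,\cdot,\cdot\}_j$ appears in each slot, but each such limit is a routine net computation once the right continuity statement is invoked.

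The main obstacle, and the one deserving the most attention, is the range condition $\delta^{**}(E^{**}) \subseteq E^*$. This does not follow from soft duality alone — \emph{a priori} $\delta^{**}$ only maps into $E^{***}$ — so some genuine input from the JB$^*$-triple structure is required. The natural route is to exhibit a weak$^*$-continuous projection (or a suitable averaging / local-reflexivity argument) that identifies where the image sits, or to use that $\delta^{**}$, being the bitranspose of a map already \emph{valued in the predual} $E^*$ of $E^{**}$, carries extra structure. Concretely, $E^*$ sits inside $E^{***}$ as the range of the canonical weak$^*$-continuous projection $\pi : E^{***} \to E^*$ dual to the inclusion $E \hookrightarrow E^{**}$; the task is to show $\pi \circ \delta^{**} = \delta^{**}$. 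I would attempt this by checking that $\delta^{**}(a)$ annihilates the annihilator of $E$ in $E^{**}$, equivalently that $\delta^{**}(a)$ is weak$^*$-continuous as a functional on $E^{**}$, which one tries to deduce from the derivation identity by expressing $\delta^{**}(a)$ through triple products landing in $E^*$.

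Assembling these three pieces gives the proposition. I expect the first two steps (weak$^*$-continuity and the extended derivation identity) to be mechanical consequences of duality and separate weak$^*$-continuity, while the containment $\delta^{**}(E^{**}) \subseteq E^*$ is where the real work lies and where I would expect the authors to invoke a structural fact — most plausibly a predual-separation or local-reflexivity argument — that forces the image back into $E^*$.
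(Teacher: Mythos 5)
Your first two steps agree with the paper's argument: weak$^*$-continuity of $\delta^{**}$ is soft duality, and the derivation identity on $E^{**}$ is obtained exactly as you describe — Goldstine plus separate weak$^*$-continuity of the triple product of $E^{**}$ (together with the observation that the module products $\J{\phi}{a}{b}$ and $\J{a}{\phi}{b}$ are weak$^*$-continuous in the functional variable), passing to weak$^*$-limits one variable at a time. One small omission there: the statement does not assume $\delta$ bounded, so before transposing one must invoke automatic continuity, which the paper does via \cite[Cor. 15]{PeRu}.

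The genuine gap is exactly at the step you flagged as ``the real work'': you never prove $\delta^{**}(E^{**})\subseteq E^{*}$, only sketch a strategy, and that strategy does not go through. You propose to show $\pi\circ\delta^{**}=\delta^{**}$ by deducing from the derivation identity that each $\delta^{**}(a)$ is weak$^*$-continuous on $E^{**}$, ``expressing $\delta^{**}(a)$ through triple products landing in $E^{*}$.'' But the derivation identity only expresses $\delta^{**}$ of a triple product in terms of module products involving $\delta^{**}$ of the factors, which are themselves unknown elements of $E^{***}$, so there is nothing to bootstrap from: a general $a\in E^{**}$ cannot be written as a triple product whose $\delta^{**}$-images are already known to lie in $E^{*}$. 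For instance, writing $a=\J{u}{u}{a}$ for a unitary or complete tripotent $u\in E^{**}$ merely reproduces the term $\J{u}{u}{\delta^{**}(a)}$ on the right-hand side, and the singular part of a functional is stable under such module products (this is precisely the normal/singular decomposition exploited later in the paper for $L(H)$); moreover $u$ need not lie in $E$, so $\delta^{**}(u)$ is not controlled either, and general JBW$^*$-triples need not contain unitaries at all. The paper's actual input is operator-theoretic rather than a predual-separation or local-reflexivity argument: by \cite[Lemma 5]{PeRo}, a consequence of the Grothendieck-type inequalities for JB$^*$-triples, every bounded linear operator from a real JB$^*$-triple into the dual of another one factors through a real Hilbert space, hence $\delta$ is weakly compact; the classical fact \cite[Lemma 2.13.1]{HiPhi} (an operator is weakly compact if and only if its bitranspose maps the bidual into the canonical image of the codomain) then yields $\delta^{**}(E^{**})\subseteq E^{*}$ at once, with no use of the derivation identity. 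Without this weak-compactness input, or an equivalent, your proof of the range condition is incomplete.
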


\begin{proof} Let $\delta$ be a ternary derivation from a real (or complex) JB$^*$-triple to its dual, which is  automatically bounded by
\cite[Cor. 15]{PeRu}.
It is known that every bounded linear operator
from a real JB$^*$-triple to the dual of another real JB$^*$-triple factors through a real Hilbert space (cf. \cite[Lemma 5]{PeRo}).
Thus $\delta$ factors though a real Hilbert space and hence it is
weakly compact. By  \cite[Lemma 2.13.1]{HiPhi}, we have $\delta^{**} ({E}^{**}) \subset {E}^{*}$.\smallskip

We shall prove now that $\delta^{**}$ is a ternary derivation. Clearly, the mapping $\delta^{**} : E^{**} \to E^{***}$ is $\sigma(E^{**},E^*)$-to-$\sigma(E^{***},E^{**})$-continuous.
Let $a,b$ and $c$ be elements in $E^{**}$. By Goldstine's Theorem,
there exist (bounded) nets $(a_\lambda)$, $(b_\mu)$ and $(c_\beta)$ in $E$ such that $(a_\lambda)\to a$, $(b_\mu)\to b$ and $(c_\beta)\to c$
in the weak$^*$-topology of $E^{**}$.\smallskip

It should be noticed here that for every net $(\phi_{\lambda})$ in $E^{***}$,
converging to some $\phi\in E^{***}$ in the $\sigma (E^{***},E^{**})$-topology,
the nets $(\J {\phi_{\lambda}}ab)$ and $(\J a{\phi_{\lambda}}b)$ converge in the
$\sigma (E^{***},E^{**})$-topology to $(\J {\phi}ab)$ and $(\J a{\phi}b)$, respectively. Having this fact in mind,
it follows from the separate weak$^*$-continuity of the triple product in $E^{**}$ together with the weak$^*$-continuity of $\delta^{**}$ that $$\delta^{**} \J a{b_{\mu}}{c_{\beta}} = w^*\hbox{-}\lim_{\lambda} \delta \J {a_{\lambda}}{b_{\mu}}{c_{\beta}}$$
$$ = w^*\hbox{-}\lim_{\lambda} \J { \delta(a_{\lambda})}{b_{\mu}}{c_{\beta}}+ \J { a_{\lambda}}{\delta(b_{\mu})}{c_{\beta}} + \J { a_{\lambda}}{b_{\mu}}{\delta(c_{\beta})},$$
$$w^*\hbox{-}\lim_{\lambda} \J { \delta(a_{\lambda})}{b_{\mu}}{c_{\beta}} = \J { \delta^{**}(a)}{b_{\mu}}{c_{\delta}},$$ $$ w^*\hbox{-}\lim_{\lambda} \J { a_{\lambda}}{\delta(b_{\mu})}{c_{\beta}} = \J {a}{\delta(b_{\mu})}{c_{\beta}},$$ and
$$w^*\hbox{-}\lim_{\lambda} \J { a_{\lambda}}{b_{\mu}}{\delta(c_{\beta})} = \J { a}{b_{\mu}}{\delta(c_{\beta})},$$ for every $\mu$ and $\beta$. Therefore \begin{equation}\label{eq 1 prop biduals} \delta^{**} \J a{b_{\mu}}{c_{\beta}} = \J { \delta^{**}(a)}{b_{\mu}}{c_{\beta}} + \J {a}{\delta(b_{\mu})}{c_{\beta}}+\J { a}{b_{\mu}}{\delta(c_{\beta})},
\end{equation}  for every $\mu$ and $\beta$. By a similar argument, taking weak$^*$-limits in $(\ref{eq 1 prop biduals})$ first in $\mu$ and later in $\beta$, we get $$ \delta^{**} \J a{b}{c} = \J { \delta^{**}(a)}{b}{c} + \J {a}{\delta^{**}(b)}{c}+\J { a}{b}{\delta^{**}(c)},$$ which concludes the proof.
\end{proof}

\subsection{Weakly amenable Jordan Banach triples}

Let $X$ be a Banach $A$-bimodule over an associative Banach algebra $A$.
Given $x_{_0}$ in $X$, the mapping $D_{x_{_0}} : A \to X$,
$D_{x_{_0}} (a) = x_{_0} a - a x_{_0}$ is a bounded
(associative or binary) derivation. Derivations of this form are called
\emph{inner}. The set of all inner derivations from $A$ to $X$
will be denoted by $\mathcal{I}nn_{b} (A,X).$\smallskip

Recall that a Banach algebra $A$ is said to be \emph{amenable} if every bounded
derivation of $A$ into a dual $A$-module is inner, and  \emph{weakly amenable} if
every (bounded) derivation from $A$ to $A^*$ is inner. In
\cite{Haa83}, U. Haagerup, making use of preliminary work of J.W.
Bunce and W.L. Paschke \cite{BunPasch} and the
Pisier-Haagerup Grothendieck's inequality for general
C$^*$-algebras, proved that every C$^*$-algebra is weakly
amenable. In \cite{HaaLaust}, U. Haagerup and N.J. Laustsen gave a
simplified proof of this result.\smallskip

When $x_{_0}$ is an element in a Jordan Banach $A$-module, $X,$
over a Jordan Banach algebra, $A$, for each $b\in A$, the mapping
$\delta_{x_{_0},b} : A \to X$, $$\delta_{x_{_0},b} (a) := (x_{_0}
\circ a)\circ b - (b \circ a)\circ x_{_0}, \ (a\in A),$$ is a
bounded derivation. Finite sums of derivations of this form are
called \emph{inner}. The symbol $\mathcal{I}nn_{J} (A,X)$ will
stand for the set of all inner Jordan derivations from $A$ to
$X$.\smallskip

The Jordan Banach algebra $A$ is said to be \emph{weakly amenable},
or \emph{Jordan weakly amenable}, if every (bounded) derivation
from $A$ to $A^*$ is inner. It is
natural to ask whether every JB$^*$-algebra is weakly amenable.
The answer is no, as Lemma~\ref{lem:4.11} or Lemma~\ref{lem:4.11 L(H)} shows.\smallskip

In the more general setting of Jordan Banach triples the corresponding
definitions read as follows: Let $E$ be a complex (resp., real)
Jordan triple and let $X$ be a triple $E$-module. For each $b\in E$ and each $x_{_0}\in X,$
we conclude, via the main identity for Jordan triple modules $(JTM3)$, that
the mapping $\delta=\delta(b,x_{_0}) : E  \to X,$ defined by
\begin{equation}\label{eq:0308111}
\delta (a)=\delta(b,x_{_0}) (a):=\J{b}{x_{_0}}{a}-\J{x_{_0}}{b}{a} \ \ (a\in E),
\end{equation}
is a ternary derivation from $E$ into $X$.
Finite sums of derivations of the form $\delta(b,x_{_0})$ are
called \emph{(ternary) inner derivations}. Henceforth, we shall write
$\mathcal{I}nn_{t} (E,X)$ for the set of all inner ternary
derivations from $E$ to $X$. 
\smallskip

A Jordan Banach triple $E$ is said to be \emph{weakly amenable} or
\emph{ternary weakly amenable} if every continuous triple derivation from
$E$ into its dual space is necessarily inner.  \smallskip


In the next step we explore the connections between
ternary weak amenability\hyphenation{amena-bility} in a real JB$^*$-triple
and its complexification. Let $E$ be a real JB$^*$-triple.
By \cite[Proposition 2.2]{IsKaRo95}, there exists a unique complex JB$^*$-triple
structure on the complexification $\widehat {E} = E\oplus i\ E$,
and a unique conjugation (i.e., conjugate-linear isometry of
period 2) $\tau$ on $\widehat {E}$ such that $E=\widehat {E}
^{\tau} := \{x\in \widehat {E} : \tau (x)=x \}$, that is, $E$ is a
real form of a complex JB$^*$-triple. Let us consider $$
\tau^\sharp : \E^{*} \rightarrow \E^{*}$$ defined by
$$\tau^\sharp (\phi) (z) = \overline{\phi (\tau (z))}.$$
The mapping $\tau^\sharp$ is a conjugation on $\E^{*}$.
Furthermore the map $$(\E^{*})^{\tau^\sharp}
\longrightarrow (\E^{\tau})^{*}\ (=E^*)$$ $$\phi \mapsto \phi|_{E}$$ is an
isometric bijection, where $(\E^{*})^{\tau^\sharp} := \{ \phi \in
\E^{*} : \tau^\sharp (\phi)= \phi \},$ and thus $\widehat{E}^{*} = E^{*} \oplus i E^*$ (compare \cite[Page
316]{IsKaRo95}).\smallskip

Our next result is a module version of \cite[Proposition 1]{HoMarPeRu}.
We shall only include a sketch of the proof.

\begin{proposition}\label{p complexification}
A real JB$^*$-triple is ternary weakly amenable if and only if
its complexification has the same property.
\end{proposition}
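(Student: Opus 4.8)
The plan is to exploit the explicit description of the complexification $\widehat{E}=E\oplus iE$ together with the identifications $\widehat{E}^*=E^*\oplus iE^*$ and $(\widehat{E}^*)^{\tau^\sharp}\cong E^*$ recorded just above the statement, and to transport derivations back and forth between $E$ and $\widehat{E}$ in a way that respects innerness. The two conjugations $\tau$ on $\widehat{E}$ and $\tau^\sharp$ on $\widehat{E}^*$ give a $\mathbb{Z}_2$-action, and the whole proof should amount to checking that the fixed-point (or averaging) constructions for these actions carry ternary derivations to ternary derivations and inner ones to inner ones.

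For the forward direction I would start from a continuous ternary derivation $\delta:\widehat{E}\to\widehat{E}^*$ on the complexification and restrict it to $E$. The first thing to verify is that, after composing appropriately with $\tau$ and $\tau^\sharp$, the restriction lands in $E^*=(\widehat{E}^*)^{\tau^\sharp}$ and is again a ternary derivation for the real triple structure; since $\tau$ is a triple isomorphism up to conjugation and $\tau^\sharp$ is built to be compatible with the module products $(\ref{eq module product dual 1})$ and $(\ref{eq module product dual 2})$, the defining identity $(\ref{eq triple derivation})$ should be preserved. If $\delta$ is inner on $\widehat{E}$, say a finite sum of terms $\delta(b,x_0)=\J{b}{x_0}{\cdot}-\J{x_0}{b}{\cdot}$ with $b\in\widehat{E}$ and $x_0\in\widehat{E}^*$, then expanding $b=b_1+ib_2$, $x_0=\phi_1+i\phi_2$ in real and imaginary parts and using bilinearity/conjugate-linearity should express the restricted derivation as a finite sum of inner derivations with parameters in $E$ and $E^*$, giving innerness on $E$.

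For the converse I would take a continuous ternary derivation $\delta:E\to E^*$ and extend it to $\widehat{E}$. The natural candidate is the map $\widehat\delta(x)=\delta(\mathrm{Re}\,x)+\tau^\sharp$-twisted correction on the imaginary part, i.e. the unique $\tau^\sharp$-compatible conjugate-linear extension determined by $\delta$ on the real form; concretely one writes $\widehat\delta(a+ib)$ using the decomposition $\widehat{E}^*=E^*\oplus iE^*$ and checks it is conjugate linear and satisfies $(\ref{eq triple derivation})$ on all of $\widehat{E}$ by linearity of both sides in each slot once it holds on the real generators. Innerness is transported the same way: an inner $\delta(b,x_0)$ with $b\in E$, $x_0\in E^*$ is still inner when $b$ and $x_0$ are reread as elements of $\widehat{E}$ and $\widehat{E}^*$ respectively.

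The main obstacle I expect is bookkeeping the conjugate-linearity and the interaction of the two conjugations with the two distinct module products in $(\ref{eq module product dual 1})$ and $(\ref{eq module product dual 2})$: because derivations into the dual are conjugate linear and the middle-variable module product involves a complex conjugation, one must be careful that the twist by $\tau^\sharp$ lands in the correct real form and that the outer-versus-middle slots of $(\ref{eq triple derivation})$ transform consistently. Verifying that the extension is well defined and $\tau^\sharp$-equivariant, rather than merely $\mathbb{R}$-linear, is the delicate point; everything else is the routine real/imaginary part expansion. Since the statement explicitly says only a sketch is included, I would present the forward restriction and the backward extension as the two halves, flag the conjugation compatibility as the step to check, and leave the componentwise verifications to the reader by analogy with \cite[Proposition 1]{HoMarPeRu}.
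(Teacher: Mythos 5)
Your toolkit is the same as the paper's --- the conjugate-linear extension $\widehat{\delta}(x+iy)=\delta(x)-i\delta(y)$, the $\tau^\sharp$-averaging projections, and the expansion of the parameters of an inner derivation into real and imaginary parts --- and you correctly flag the conjugate-linearity of the middle module slot as the delicate point. But the two halves of your argument are assembled crosswise, and as organized neither implication is actually established. To prove that $\widehat{E}$ ternary weakly amenable implies $E$ ternary weakly amenable, the arbitrary object is a derivation $\delta:E\to E^*$; starting, as you do, from a derivation on $\widehat{E}$ and restricting only shows that restrictions of inner derivations are inner, and says nothing about a given $\delta$ on $E$ unless you first \emph{extend} it to $\widehat{E}$ --- the extension you build, but deploy only in your converse. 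Symmetrically, to prove that $E$ ternary weakly amenable implies $\widehat{E}$ ternary weakly amenable, the arbitrary object is a derivation $\widehat{\delta}$ on $\widehat{E}$; extending a derivation given on $E$ and noting that innerness persists upstairs only shows that extensions of inner derivations are inner, and does not treat an arbitrary $\widehat{\delta}$ unless you first \emph{project} it down to $E$ --- the restriction machinery you develop, but deploy only in your forward half. The repair is to swap the transports between the two halves: in the paper, the first implication goes by extending $\delta$ to $\widehat{\delta}$ (a derivation by \cite[Remark 13]{PeRu}), invoking innerness on $\widehat{E}$, and decomposing the complex parameters; the second goes by composing $\widehat{\delta}|_{E}$ with $P=\frac{Id_{E^*}+\tau^{\sharp}}{2}$ and $\frac{Id_{E^*}-\tau^{\sharp}}{2i}$ to obtain derivations on $E$, invoking innerness there, and rereading real parameters in $\widehat{E}$.

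A second, substantive point: even after the swap, the converse requires reconstructing $\widehat{\delta}$ from its two projected parts, and since the middle module product is conjugate linear, multiplication by $i$ does \emph{not} carry ternary derivations into ternary derivations; so the $i$-twisted component of $\widehat{\delta}|_{E}$ cannot be handled merely by ``rereading $b\in E$, $x_0\in E^*$ as elements of $\widehat{E}$, $\widehat{E}^*$,'' which is the only upward transfer your converse provides. This is precisely the twist you flagged in your final paragraph, and it is where the bookkeeping genuinely matters rather than being routine; the paper's device for it is exactly the second projection $\frac{Id_{E^*}-\tau^{\sharp}}{2i}$ applied to $\widehat{\delta}|_{E}$.
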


\begin{proof} Let $E$ be a real JB$^*$-triple, whose complexification
is denoted by $\widehat {E} = E\oplus i\ E$, and let $\tau$ denote the conjugation
on $\widehat{E}$ satisfying $E=\widehat {E}^{\tau}$.\smallskip

According to \cite[Remark 13]{PeRu}, given a triple derivation
$\delta : E \to E^*$, the mapping $\widehat{\delta} : \widehat {E}
\to \widehat{E}^*$, $\widehat{\delta} (x+i y) := \delta (x) - i
\delta (y)$ is a (conjugate-linear) triple derivation from
$\widehat {E}$ into $\widehat{E}^*$. It can be easily checked that
the identity
\begin{equation}\label{eq degree 1 derivation complexification}
\delta (a+i b, \phi_1 +i \phi_2) = \delta (a, \phi_1)-
\delta (b, \phi_2) -i \delta (a, \phi_2) -i \delta (b, \phi_1),
\end{equation} holds for every $a,b\in E\subseteq \E$ and
$\phi_1,\phi_2\in E^*\subseteq \E^*$.\smallskip

Having in mind the facts proved in the above paragraph,
we can see that $E$ is ternary weakly amenable whenever
$\widehat{E}$ satisfies the same property.\smallskip

For the reciprocal implication, we notice that if $\widehat{\delta} : \widehat {E}
\to \widehat{E}^*$ is a ternary derivation from $\widehat {E} = E\otimes i E$ to
$\widehat{E}^* = E^* \otimes i E^*,$ it can be easily checked that the identity
$$(P\circ \widehat{\delta}) \J {a}{b}{c} =  \J {(P\circ \widehat{\delta})a}{b}{c}+
\J {a}{(P\circ \widehat{\delta})b}{c} + \J {a}{b}{(P\circ \widehat{\delta})c},$$
holds for every $a, b, c\in E = \widehat{E}^{\tau}$,
where $P$ denotes $\frac{Id_{E^*} + \tau^{\sharp}}{2}$ or $\frac{Id_{E^*} - \tau^{\sharp}}{2 i}$.
Therefore the mapping  $P \circ \widehat{\delta}|_{E}: E \to E^*$ is a ternary derivation.
Since every ternary inner derivation $\delta$ from $E$ to $E^*$
defines a ternary inner derivation from $\widehat {E}$ to
$\widehat{E}^*,$ we can guarantee that $\widehat {E}$ is ternary weakly amenable
when $E$ has this property.
\end{proof}

Note that if $A$ is a Banach $^*$-algebra, $A$ is ternary weakly amenable if each continuous
ternary derivation from $A$ (considered as a Jordan Banach triple system) into $A^*$ is inner.
Thus a Banach $^*$-algebra can be weakly amenable and/or ternary weakly amenable,
and the two concepts do not necessarily coincide (compare Proposition \ref{p compact operators is not ternary weakly amen}).\smallskip

We emphasize again that, unlike derivations from $A$ to itself, derivations from $A$ to $A^*$
are defined to be conjugate linear maps (in the complex case).\smallskip

\section{Commutative C$^*$-algebras are ternary weakly amenable}

In this section we prove that every commutative (real or complex)  C$^*$-algebra is
ternary weakly amenable. Our next results establish some technical
connections between associative and ternary derivations from a
Banach $^*$-algebra $A$ to a Jordan $A$-module (resp., associative
$A$-bimodule).\smallskip

Following standard notation, given a Banach algebra $A$,
$a\in A$ and $\varphi\in A^*$,
$a \varphi,~ \varphi a$ will denote the elements in
$A^*$ given by $$a\varphi(y) = \varphi(y a)\quad
\text{and} \quad \varphi a (y) = \varphi(a y), \ \ (y\in A).$$

\begin{lemma}\label{lem:4.1}
\label{l ternary and associative derivations} Let $A$ be an
associative unital (Banach) *-algebra (which we consider as a Jordan Banach algebra), $X$ a unital Jordan
$A$-module and let $\delta : A_{sa} \to X$ be a (real) linear
mapping. The following assertions are
equivalent:\begin{enumerate}[$(a)$] \item $\delta$ is a ternary
derivation and $\delta (1) =0$. \item $\delta$ is a Jordan
derivation.
\end{enumerate} Further, a conjugate-linear mapping $\delta : A \to X$ is a
ternary derivation with $\delta (1) =0$ if, and only if, the
linear mapping $D: A \to X$, $D(a):= \delta (a^*)$, is a Jordan
derivation.
\end{lemma}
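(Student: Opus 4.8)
The plan is to establish the equivalence $(a)\Leftrightarrow(b)$ on $A_{sa}$ by a direct computation and then to deduce the final (``Further'') statement from it by a restriction argument through the involution. The bridge between the two notions is the identity $\J{a}{1}{c}=a\circ c$, valid in $A$ and in $X$ because both are unital: inserting $v=1$ into the elementary product $\J{u}{v}{w}=(u\circ v)\circ w+(w\circ v)\circ u-(u\circ w)\circ v$ collapses it, via $u\circ 1=u$ and the commutativity of $\circ$, to $u\circ w$. For $(a)\Rightarrow(b)$ I would put $1$ in the middle slot of the ternary identity; since $\delta(1)=0$ the middle term drops out, and, using unitality of $X$ for the two surviving terms, one is left with $\delta(a\circ c)=\delta a\circ c+a\circ\delta c$, the Jordan derivation identity. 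On $A_{sa}$ everything is real and trilinear, so no conjugation enters at this stage.

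For $(b)\Rightarrow(a)$ I would first note $\delta(1)=0$, which follows from $1=1\circ 1$ and the Leibniz rule (they give $\delta(1)=2\,\delta(1)$). Then I would expand $\J{a}{b}{c}$ through the elementary formula and apply the Jordan Leibniz rule to each of the three summands $(a\circ b)\circ c$, $(c\circ b)\circ a$, $(a\circ c)\circ b$; comparing the resulting terms with those obtained by expanding $\J{\delta a}{b}{c}+\J{a}{\delta b}{c}+\J{a}{b}{\delta c}$ through the same formula shows that the two expressions agree term by term. The verification is purely formal, using only the bilinearity and commutativity of $\circ$ together with the fact that all three module products $\J{\cdot}{\cdot}{\cdot}_j$ are given by the one elementary polynomial; no reassociation or further module axiom is required.

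For the ``Further'' statement I would reduce to the first part. Since $a\mapsto a^*$ and $\delta$ are both conjugate linear, the map $D(a)=\delta(a^*)$ is linear, and $D(1)=\delta(1)$, so the normalization $\delta(1)=0$ corresponds to $D(1)=0$ (automatic once $D$ is a Jordan derivation). On the self-adjoint part the involution is trivial, so $D|_{A_{sa}}=\delta|_{A_{sa}}$ as a real-linear map $A_{sa}\to X$, and the JB$^*$-triple product $\frac12(ab^*c+cb^*a)$ restricts on $A_{sa}$ to the elementary real product of the first part. It therefore suffices to show that the ternary-derivation identity for $\delta$ and the Jordan-derivation identity for $D$ are each equivalent to their restrictions to $A_{sa}$, after which the first part applied to $\delta|_{A_{sa}}=D|_{A_{sa}}$ closes the loop.

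The main obstacle is precisely this last reduction, that is, the bookkeeping of complex-linearity types. Writing $a=a_1+i a_2$ with $a_j\in A_{sa}$, I would check that in the ternary identity both sides are conjugate linear in the first and third variables and linear in the middle one: the triple product is linear in its outer variables and conjugate linear in its middle variable, $\delta$ is conjugate linear, and the module products carry exactly the conjugate-linearity pattern prescribed by $(JTM1)$, so each of the three terms on the right inherits this same type. Since the two sides share the linearity type slot by slot, agreement on $A_{sa}^3$ forces agreement on all of $A^3$; the analogous, and easier, statement for the complex-bilinear Jordan identity lets one pass between $D$ and $D|_{A_{sa}}$. This is exactly the place where the convention that derivations into a module be conjugate linear is indispensable, since it is what makes the linearity types on the two sides of the ternary identity coincide.
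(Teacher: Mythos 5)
Your proposal is correct and takes essentially the same route as the paper's own proof: $(a)\Rightarrow(b)$ by placing $1$ in the middle slot and using $\J a1b = a\circ b$, $(b)\Rightarrow(a)$ by first deriving $\delta(1)=0$ from $1=1\circ 1$ and then expanding the elementary triple product term by term, and the ``Further'' part by restricting to $A_{sa}$, noting $D|_{A_{sa}}=\delta|_{A_{sa}}$, and matching conjugate-linearity types to pass between $A_{sa}$ and $A$. The only difference is one of detail: you explicitly carry out the nine-term verification and the $(JTM1)$ linearity bookkeeping that the paper compresses into ``follows straightforwardly'' and ``easy to check''.
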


\begin{proof} $(a)\Rightarrow (b)$ Since $X$ is a unital real Jordan
$A_{sa}$-module and $\delta(1) =0$, the identity
$$\delta (a\circ b) = \delta \J a1b = \J {\delta(a)}1b +  \J a{\delta(1)}b +  \J a1{\delta(b)} $$
$$= \J {\delta(a)}1b +  \J a1{\delta(b)} = {\delta(a)}\circ b + a\circ
{\delta(b)},$$ gives the desired statement.\smallskip

For every Jordan derivation $\delta : A_{sa} \to X,$ we have
$\delta (1) = \delta (1\circ 1) = 2 (1 \circ \delta (1)) = 2
\delta (1)$, and hence $\delta(1) =0$. The implication
$(b)\Rightarrow (a)$ follows straightforwardly.\smallskip

To prove the last statement, we observe that a conjugate-linear
mapping $\delta : A \to X$ is a ternary derivation with $\delta
(1) =0$ if, and only if, $\delta|_{A_{sa}} : A_{sa} \to X$ is a
ternary derivation with $\delta (1) =0$, which, by $(a)\Leftrightarrow (b)$,
is equivalent to say that
$\delta|_{A_{sa}}$ is a Jordan derivation. It is easy to check
that $\delta|_{A_{sa}} = D|_{A_{sa}}$ is a Jordan derivation if
and only if $D$ is a Jordan derivation from $A$ to $X$.
\end{proof}

Henceforth, given a unital associative *-algebra, $A$, and a Jordan
$A$-module, $X$, we shall write $\mathcal{D}_t^{o}(A,X)$ for the set of all (continuous) ternary derivations
from $A$ to $X$ vanishing at the unit element. We have seen
in Lemma \ref{l ternary and associative derivations} that, when $A$ and $X$ are unital,
we have \begin{equation}\label{equ jordan and ternary vanishing at 1}
\mathcal{D}_J(A,X)\circ * =\mathcal{D}_t^{o}(A,X):=\{\delta\in {\mathcal D}_t(A,X):\delta(1)=0\}.
\end{equation}

Given a Banach *-algebra $A$, we consider the involution $^*$ on
$A^*$ defined by $\varphi^* (a) := \overline{\varphi (a^*)}$
($a\in A$, $\varphi\in A^*$). An element $\delta\in \mathcal{D}_J
(A,A^*)$ is called a \emph{*-derivation} if $\delta(a^*) =
\delta(a)^*$, for every $a\in A$. The symbols $\mathcal{D}_J^*
(A,A^*)$ and $\mathcal{I}nn_J^*(A,A^*)$ (resp., $\mathcal{D}_b^*
(A,A^*)$ and $\mathcal{I}nn_b^*(A,A^*)$) will denote the sets of
all Jordan and Jordan-inner (resp., associative and inner)
*-derivations from $A$ to $A^*$, respectively.

\begin{lemma}\label{l 3.4.2}
Let $X$ be an $A$-bimodule over a Banach $^*$-algebra $A$. Then
the following statements hold:
\begin{enumerate}[$(i)$]
\item $\mathcal{I}nn_J (A,X)\subset \mathcal{I}nn_b(A,X)$. In
particular, $\mathcal{I}nn_J^*(A,A^*)\subset
\mathcal{I}nn_b^*(A,A^*)$. \item Let $D$ be an element in
$\mathcal{I}nn_b(A,A^*)$, that is, $D =D_{\varphi}$ for some
$\varphi$ in $A^*$. Then
$D$ is a *-derivation whenever $\varphi^*=-\varphi$. Further, if
the linear span of all commutators of the form $[a,b]$ with $a,b$
in $A$ is norm-dense in $A$, then $D$ is a *-derivation if, and
only if, $\varphi^*=-\varphi$.
\end{enumerate}
\end{lemma}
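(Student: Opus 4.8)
The plan is to reduce both parts to direct computations with the associative product of $A$, since the Jordan product on $A$ and on $X$ is the derived one $u\circ v=\frac12(uv+vu)$ (a single algebra factor acting on a module element through the bimodule actions).

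For part $(i)$ I would take a generating inner Jordan derivation $\delta_{x_0,b}$ and simply expand it. Writing out $(x_0\circ a)\circ b$ and $(b\circ a)\circ x_0$ and subtracting, the ``fully symmetric'' terms $x_0ab$ and $bax_0$ cancel, leaving
\[
\delta_{x_0,b}(a)=\tfrac14\bigl(ax_0b+bx_0a-abx_0-x_0ba\bigr).
\]
The crucial observation is that this right-hand side equals $D_w(a)=wa-aw$ for the module element $w=\tfrac14(bx_0-x_0b)$; this is verified by expanding $wa-aw$ and using associativity of the two bimodule actions. Hence every generating inner Jordan derivation is an inner binary derivation, and since $D_{w_1}+D_{w_2}=D_{w_1+w_2}$ a finite sum of such maps is again of the form $D_w$. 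This yields $\mathcal{I}nn_J(A,X)\subseteq\mathcal{I}nn_b(A,X)$. The ``in particular'' statement is then immediate: any $\delta\in\mathcal{I}nn_J^*(A,A^*)$ lies in $\mathcal{I}nn_J(A,A^*)\subseteq\mathcal{I}nn_b(A,A^*)$ and, being a $*$-derivation by definition, lies in $\mathcal{I}nn_b^*(A,A^*)$.

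For part $(ii)$ I would first record the explicit form of $D=D_\varphi$. Using the dual-module conventions $a\varphi(y)=\varphi(ya)$ and $\varphi a(y)=\varphi(ay)$, one computes
\[
D_\varphi(a)(y)=\varphi(ay-ya)=\varphi([a,y]),\qquad a,y\in A.
\]
Next I would unwind the $*$-derivation condition $D(a^*)=D(a)^*$, where $\psi^*(z):=\overline{\psi(z^*)}$. On one side $D(a^*)(y)=\varphi([a^*,y])$; on the other, using $\overline{\varphi(w)}=\varphi^*(w^*)$ and $(ay^*-y^*a)^*=ya^*-a^*y$, one gets $D(a)^*(y)=\overline{\varphi([a,y^*])}=\varphi^*([y,a^*])=-\varphi^*([a^*,y])$. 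Comparing and letting $c=a^*$ range over $A$, I conclude that $D$ is a $*$-derivation if and only if
\[
(\varphi+\varphi^*)([c,y])=0\qquad\text{for all }c,y\in A,
\]
i.e.\ $\varphi+\varphi^*$ annihilates every commutator. Both assertions now follow: if $\varphi^*=-\varphi$ then $\varphi+\varphi^*=0$ and the criterion holds, so $D$ is a $*$-derivation; and when the linear span of the commutators $[a,b]$ is norm-dense, the continuous functional $\varphi+\varphi^*$ vanishing on this dense set must vanish identically, giving $\varphi^*=-\varphi$.

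Since $A$ is associative, everything here is forced once the conventions are fixed, and I expect no essential difficulty. The only points demanding care are bookkeeping: keeping the left and right bimodule actions straight throughout the expansion in $(i)$, and correctly tracking the conjugations together with the identity $\overline{\varphi(w)}=\varphi^*(w^*)$ and the behaviour of the involution on commutators in $(ii)$. The single genuinely non-mechanical step is recognizing the implementing element $w=\tfrac14(bx_0-x_0b)$ in part $(i)$; once it is in hand the rest is routine.
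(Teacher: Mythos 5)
Your proposal is correct and takes essentially the same route as the paper: in part $(i)$ you identify the generator $\delta_{x_0,b}$ with the binary inner derivation implemented by $\tfrac14[b,x_0]=\tfrac14(bx_0-x_0b)$, exactly as in the paper, and in part $(ii)$ your reduction of the $*$-derivation condition to $(\varphi+\varphi^*)$ annihilating all commutators is precisely the paper's identity $\varphi[a^*,b]=-\varphi^*[a^*,b]$, with the same use of density (and boundedness of $\varphi+\varphi^*$) for the converse. There are no gaps.
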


\begin{proof}
$(i)$ Let us consider a Jordan derivation of the form $\delta_{x_0,b}$,
where $x_0\in X$ and $b\in A$. For each $a$ in $A$, we can easily check that
$$\delta_{x_0,b} (a) = (x_{_0} \circ a)\circ b - (b \circ a)\circ x_{_0}
= \frac14 \left( [b,x_0] a -a [b,x_0]\right)= D_{\frac14 [b,x_0]}
(a),$$ where the Lie bracket $[.,.]$ is defined by $[b,x_0] =
 ( b x_0 - x_0 b)$ for every $b\in A$, $x_0\in X$. Since
every inner Jordan derivation $D$ from $A$ to $X$ must be a finite
sum of the form $D = \sum_{j=1}^{n} \delta_{x_j,b_j},$ with
$x_j\in X$ and $b_j\in A$, it follows that $D= \sum_{j=1}^{n}
D_{\frac14 [b_j,x_j]} = D_{\frac14  \sum_{j=1}^{n} [bj,x_j]}$ is
an inner (associative) binary derivation.\smallskip

$(ii)$ Let $D =D_{\varphi},$ where $\varphi\in A^*$ and
$\varphi^*=-\varphi$. Let us fix two arbitrary elements $a,b$ in
$A$. The identities $$ D_{\varphi} (a^*) (b) = \left( \varphi a^*
- a^* \varphi \right) (b) = \varphi ( a^* b - b a^*) $$ and $$
D_{\varphi} (a)^* (b) = \left( \varphi a - a \varphi \right)^* (b)
= \left(a^* \varphi^* - \varphi^* a^* \right) (b) = \varphi^* (b
a^* - a^* b),$$ give $D_{\varphi} (a^*) = D_{\varphi} (a)^*$,
proving that $D$ is a $^*$-derivation.\smallskip

Conversely, suppose now that the linear span of all commutators of
the form $[a,b]$ with $a,b$ in $A$ is norm-dense in $A$ and $D
=D_{\varphi}$ is a $^*$-derivation. The identity $D_{\varphi}
(a^*) = D_{\varphi} (a)^*$ ($a\in A$), implies that $\varphi
[a^*,b] = -\varphi^* [a^*,b]$, for every $a,b\in A$, therefore
$\varphi= -\varphi^*$ as we wanted.
\end{proof}

\begin{remark}{\rm There exist many examples of Banach algebras $A$ in
which the linear span of all commutators of the form $[a,b]$ with
$a,b$ in $A$ is norm-dense in $A$. This property is never
satisfied by a commutative Banach algebra. However, the
list of examples of C$^*$-algebras satisfying this property
includes all properly infinite C$^*$-algebras, all properly
infinite von Neumann algebras, and the C$^*$-algebra of all compact operators
on an infinite dimensional complex Hilbert space \cite{PeaTop71}
(see also the survey \cite{Weiss04}). T. Fack proved in \cite{Fack} that
if the unit of a (unital) C$^*$-algebra $A$ is properly infinite
(i.e. there exist two orthogonal projections $p, q$ in $A$
Murray-von Neumann equivalent to 1), then any hermitian element is
a sum of at most five self-adjoint commutators. Many other results
have been established to show that all elements in a C$^*$-algebra
which have trace zero with respect to all tracial states can be
written as a sum of finitely many commutators (compare
\cite{Marc02}, \cite{Pop}, \cite{Marc06}, \cite{Marc10}, among
others).}\end{remark}

Let $X$ be a unital Banach $A$-bimodule over a unital Banach
algebra $A$. Regarding $X$ as a real Banach triple $A$-module with
respect to the induced triple product $\J axc = \frac12 ( a x
c+cxa)$,  $\J xac = \frac12 ( xac+cax)$ ($a,c\in A$, $x\in X$),
we can easily see that every ternary derivation $\delta :
A \to X$ annihilates at $1$, that is, $$\mathcal{D}_t
(A,X)=\mathcal{D}_t^{o}(A,X).$$ Indeed, since $$\delta(1) =\delta
(\J 111) = \J {\delta(1)}11 +\J 1{\delta(1)}1+ \J 11{\delta(1)} =
3 \delta(1),$$ we have $\delta(1)=0$. When we consider Banach
$A$-bimodules equipped with ternary products which differ from the
previous one, the identity $\mathcal{D}_t
(A,X)=\mathcal{D}_t^{o}(A,X)$ doesn't hold in general. Our next
lemmas study the case  $X=A^*$, where $A$ is a unital  Banach
$^*$-algebra.

\begin{lemma}
\label{l delta(1) anti-symmetric} Let $A$ be a unital Banach
$^*$-algebra equipped with the ternary product given by $\J abc =
\frac12 \ ( a b^* c + cb^* a) $. Every ternary derivation $\delta
$ in $\mathcal{D}_t (A,A^*)$ satisfies the identity $\delta(1)^*
=- \delta(1)$, that is, $\overline{\delta(1) (a^*)} =- \delta(1)
(a),$ for every $a$ in $A$.
\end{lemma}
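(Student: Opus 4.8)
The plan is to read off the conclusion by evaluating the ternary derivation identity $(\ref{eq triple derivation})$ at the single triple $a=b=c=1$, since $\delta(1)$ is the only object in play. First I would note that for the product $\J abc = \frac12(ab^*c+cb^*a)$ one has $\J 111 = 1$, so the derivation law gives
\[
\delta(1) = \delta\J 111 = \J{\delta(1)}11 + \J 1{\delta(1)}1 + \J 11{\delta(1)}.
\]
The three summands are now module products of $\varphi := \delta(1)\in A^*$ against copies of the unit, and the whole proof reduces to evaluating them via the dual-module formulas $(\ref{eq module product dual 1})$ and $(\ref{eq module product dual 2})$.

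The computational heart is to test each summand on an arbitrary $x\in A$, for which I would first record the two auxiliary identities $\J 11x = x$ and $\J 1x1 = x^*$ (both immediate from $1^*=1$). The two outer summands are governed by $(\ref{eq module product dual 1})$: both $\J{\varphi}11$ and $\J 11{\varphi}$ send $x$ to $\varphi\J 11x = \varphi(x)$, so each equals $\varphi$. The middle summand is governed by $(\ref{eq module product dual 2})$, which carries a conjugation: $\J 1{\varphi}1 (x) = \overline{\varphi\J 1x1} = \overline{\varphi(x^*)} = \varphi^*(x)$, using the definition $\varphi^*(x) = \overline{\varphi(x^*)}$ of the involution on $A^*$. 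Thus the middle term is exactly $\varphi^*$.

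Substituting these back turns the displayed identity into $\varphi = 2\varphi + \varphi^*$, whence $\varphi^* = -\varphi$, i.e.\ $\delta(1)^* = -\delta(1)$, which is the assertion. I do not expect any genuine obstacle: the argument is pure bookkeeping in the module-product definitions, and the only point demanding care is matching the three slots of $\J{\delta(1)}11$, $\J 1{\delta(1)}1$, $\J 11{\delta(1)}$ to the first, middle, and third dual-module products and remembering that precisely the middle slot introduces the conjugation responsible for turning $\varphi$ into $\varphi^*$. Note that the conjugate-linearity of $\delta$ is never used, as $\delta$ is evaluated only at the fixed point $1$; continuity is likewise irrelevant to this pointwise computation.
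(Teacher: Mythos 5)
Your proof is correct and is essentially identical to the paper's: both evaluate the derivation identity at $\J 111 = 1$, use the dual-module formulas to get $\J{\delta(1)}11 = \J 11{\delta(1)} = \delta(1)$ and $\J 1{\delta(1)}1 = \delta(1)^*$ (the conjugation in the middle slot), and conclude from $\delta(1) = 2\delta(1) + \delta(1)^*$. Your added remarks (that $\J 11x = x$, $\J 1x1 = x^*$, and that neither conjugate-linearity nor continuity of $\delta$ is used) are accurate and merely make explicit what the paper leaves implicit.
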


\begin{proof} Let $\delta : A\to A^*$ be a ternary derivation.
Since the identity
$$\delta(1) (a) =\delta (\J 111) (a)=\J {\delta(1)}11 (a)+\J 1{\delta(1)}1 (a)+ \J 11{\delta(1)} (a)$$
$$ = 2 \delta(1) \J 11a  + \overline{\delta(1) \J 1a1} = 2 \delta(1) (a) + \delta(1)^* (a),$$
holds for every $a\in A$, we do have $\delta(1)^* =- \delta(1).$
\end{proof}

\begin{lemma}\label{l 3.4.3}
 Let $A$ be a unital Banach
$^*$-algebra equipped with the ternary product given by $\J abc =
\frac12 \ ( a b^* c + cb^* a) $. Then
$$\mathcal{D}_t (A,A^*)=\mathcal{D}_t^{o}(A,A^*)+\mathcal{I}nn_t(A,A^*).$$
More precisely, if $\delta \in \mathcal{D}_t(A,A^*)$, then
$\delta=\delta_0+\delta_1$, where $\delta_0\in
\mathcal{D}_t^{o}(A,A^*)$ and $\delta_1$, defined by $\delta_1 (a)
:= \delta(1)\circ a^* = \frac{1}{2}(\delta(1) \ a^* + a^* \
\delta(1))$, is the inner derivation
$-\frac{1}{2}\delta(1,\delta(1))$.
\end{lemma}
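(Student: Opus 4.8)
The plan is to peel off from $\delta$ an inner derivation reproducing its value at the unit, thereby reducing to a derivation that annihilates $1$. The candidate is exactly the map named in the statement, $\delta_1(a):=\delta(1)\circ a^*=\frac12(\delta(1)\,a^*+a^*\,\delta(1))$, and the argument consists in verifying its two advertised properties: first, that $\delta_1=-\frac12\,\delta(1,\delta(1))$, so that $\delta_1\in\mathcal{I}nn_t(A,A^*)$; and second, that $\delta_1(1)=\delta(1)$, so that $\delta_0:=\delta-\delta_1$ lands in $\mathcal{D}_t^{o}(A,A^*)$. Since $\delta_0$ is then a difference of ternary derivations with $\delta_0(1)=0$, the decomposition $\delta=\delta_0+\delta_1$ gives the claim.

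The first property is the main computation, which I would carry out by expanding $\delta(1,\delta(1))(a)=\J{1}{\delta(1)}{a}-\J{\delta(1)}{1}{a}$ through the two dual-module products recorded in $(\ref{eq module product dual 1})$ and $(\ref{eq module product dual 2})$. The second term $\J{\delta(1)}{1}{a}$ is of type $\{\cdot,\cdot,\cdot\}_1$, so $(\ref{eq module product dual 1})$ gives its value on $x$ as $\delta(1)\J{1}{a}{x}=\delta(1)\left(\frac12(a^*x+xa^*)\right)$; unwinding the notation $a\varphi(y)=\varphi(ya)$, $\varphi a(y)=\varphi(ay)$ turns this into precisely $(\delta(1)\circ a^*)(x)$. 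The first term $\J{1}{\delta(1)}{a}$ is of type $\{\cdot,\cdot,\cdot\}_2$, so $(\ref{eq module product dual 2})$ gives its value on $x$ as the conjugated expression $\overline{\delta(1)\J{1}{x}{a}}=\overline{\delta(1)\left(\frac12(x^*a+ax^*)\right)}$.

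The crux is to simplify this conjugation, and this is exactly where the preceding lemma is used: Lemma~\ref{l delta(1) anti-symmetric} furnishes $\delta(1)^*=-\delta(1)$, i.e.\ $\overline{\delta(1)(b^*)}=-\delta(1)(b)$. Applying this with $b=a^*x=(x^*a)^*$ and $b=xa^*=(ax^*)^*$ converts each conjugation into a minus sign, yielding $\J{1}{\delta(1)}{a}=-(\delta(1)\circ a^*)$. Combining the two terms then gives $\delta(1,\delta(1))(a)=-(\delta(1)\circ a^*)-(\delta(1)\circ a^*)=-2\,\delta_1(a)$, so that $\delta_1=-\frac12\,\delta(1,\delta(1))$. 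This is inner: since $-\frac12$ is real, $-\frac12\,\delta(1,\delta(1))=\delta(1,-\frac12\,\delta(1))\in\mathcal{I}nn_t(A,A^*)$ (scaling the argument $\delta(1)$ rescales both the type-$1$ and the conjugate-linear type-$2$ contributions by the same real factor).

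Finally I would verify $\delta_1(1)=\delta(1)$: because $1^*=1$ and $\delta(1)\cdot 1=1\cdot\delta(1)=\delta(1)$ in $A^*$, we get $\delta_1(1)=\delta(1)\circ 1=\delta(1)$, whence $\delta_0(1)=0$. The only real obstacle is bookkeeping: one must keep the two inequivalent dual-module products and their conjugations rigorously apart, the decisive input being that the antisymmetry $\delta(1)^*=-\delta(1)$ converts the complex conjugation in $(\ref{eq module product dual 2})$ into exactly the sign needed to \emph{double} the two contributions of $\delta(1)\circ a^*$ rather than cancel them, which is what produces the clean factor $-2$.
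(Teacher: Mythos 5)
Your proposal is correct and takes essentially the same route as the paper: you decompose $\delta=\delta_0+\delta_1$ with $\delta_1(a)=\delta(1)\circ a^*$, expand $\delta(1,\delta(1))$ through the two dual-module products $(\ref{eq module product dual 1})$ and $(\ref{eq module product dual 2})$, and use Lemma~\ref{l delta(1) anti-symmetric} ($\delta(1)^*=-\delta(1)$) to convert the conjugation into the sign that doubles, rather than cancels, the two copies of $\delta(1)\circ a^*$ --- exactly the paper's computation, done termwise instead of via the involution on $A^*$. Your additional remark that $-\tfrac12\delta(1,\delta(1))=\delta(1,-\tfrac12\delta(1))$, confirming membership in $\mathcal{I}nn_t(A,A^*)$ in the strict sense, is a correct and harmless refinement the paper leaves implicit.
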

\begin{proof} Let $\delta : A\to A^*$ be a ternary derivation. The mapping
$\delta_1 : A \to A^*$ $\delta_1 (a) := \delta(1)\circ a^*$ is a
conjugate-linear mapping with $\delta_1 (1) = \delta(1)$.  We will
show that $\delta_1=-\frac{1}{2}\delta(1,\delta(1))$. Then, the
mapping $\delta_0 = \delta -\delta_1$ is a triple derivation with
$\delta_0 (1) = 0$ and $\delta = \delta_0+ \delta_1$, proving the
lemma.\smallskip

Lemma \ref{l delta(1) anti-symmetric} above implies that
$\delta(1)^* =- \delta(1).$ \smallskip

Now we consider the inner triple derivation $-\frac{1}{2}\delta(1,\delta (1))$. For each $a$ and $b$ in $A$ we have
$$ -\frac{1}{2}\delta(1,\delta(1)) (a) (b) = -\frac{1}{2} \left( \J {1}{\delta(1)}a - \J {\delta(1)}1a\right) (b)$$
$$= -\frac{1}{2} \left( \overline{\delta(1)(\J {1}{b}a)} - \delta(1)(\J 1ab) \right) $$
$$= -\frac{1}{2} \left( \delta(1)^* (\J {1}ab) - \delta(1)(\J 1ab ) \right)$$ $$(\hbox{since } \delta(1)^* =- \delta(1))= -\frac{1}{2} \left(- \delta(1) (\J {1}ab) - \delta(1)(\J 1ab ) \right)$$ $$ = \delta(1)(\J 1ab ) = \delta(1)(a^* \circ b )= \delta_1 (a) (b).$$
Thus, $\delta_1 = -\frac{1}{2}\delta(1,\delta(1))$ as promised.\smallskip
\end{proof}

\begin{lemma}\label{lem:4.5}
Let $A$ be a unital Banach $^*$-algebra equipped with the ternary product given by $\J abc =
\frac12 \ ( a b^* c + cb^* a) $ and the Jordan product $a\circ b=(ab+ba)/2$, let $D:A\rightarrow A^*$
be a linear mapping and let $\delta: A \to A^*$ denote the conjugate linear
mapping defined by $\delta (a) :=D(a^*)$. Then $D$ lies in
${\mathcal D}_J(A,A^*)$ if, and only if, $\delta\J a1b=\J{\delta (
a)}{1}{b}+\J{a}{1}{\delta (b)}$ for all $a,b\in A$. Moreover,
$$\mathcal{D}_t^o(A,A^*)\!=\!\{ \delta: A \rightarrow A^* : \exists
D\in {\mathcal D}_J^*(A,A^*) \hbox{ s.t. }
 \delta (a) = D (a^*),(a\in A)\}.$$
\end{lemma}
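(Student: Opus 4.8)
The plan is to establish the two assertions in turn, each time unwinding the definitions against the explicit $C^*$-triple product $\J abc=\frac12(ab^*c+cb^*a)$ and the dual-module products $(\ref{eq module product dual 1})$ and $(\ref{eq module product dual 2})$. For the first assertion I would evaluate the three triple products occurring in the stated identity. Since $1^*=1$, one has $\J a1b=\frac12(ab+ba)=a\circ b$. Using $(\ref{eq module product dual 1})$ together with $\J 1bx=\frac12(b^*x+xb^*)=b^*\circ x$ and the dual Jordan action $(y\circ\varphi)(x)=\varphi(y\circ x)$ gives $\J{\delta(a)}1b=\delta(a)\circ b^*$; symmetrically, from $\J 1ax=a^*\circ x$ one obtains $\J a1{\delta(b)}=a^*\circ\delta(b)$. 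Hence the condition $\delta\J a1b=\J{\delta(a)}1b+\J a1{\delta(b)}$ reads $\delta(a\circ b)=\delta(a)\circ b^*+a^*\circ\delta(b)$ for all $a,b$. Substituting $\delta=D\circ *$, using $(a\circ b)^*=a^*\circ b^*$, and renaming $a^*,b^*$ as fresh variables turns this into the Jordan derivation identity $D(u\circ v)=D(u)\circ v+u\circ D(v)$; so the stated condition holds for all $a,b$ exactly when $D\in\mathcal{D}_J(A,A^*)$.

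For the set equality I would prove the two inclusions separately. If $\delta\in\mathcal{D}_t^o(A,A^*)$, then applying the ternary derivation identity to the triple $(a,1,b)$ and dropping the middle term $\J a{\delta(1)}b=0$ (since $\delta(1)=0$) recovers precisely the condition of the first assertion, so $D:=\delta\circ *$ is a Jordan derivation. Applying the same identity to $(1,a,1)$, and noting $\J 1a1=a^*$ while $(\ref{eq module product dual 2})$ gives $\J 1{\delta(a)}1=\delta(a)^*$, yields $\delta(a^*)=\delta(a)^*$; through $\delta=D\circ *$ this is the $*$-derivation identity $D(a^*)=D(a)^*$. Thus $D\in\mathcal{D}_J^*(A,A^*)$, which is one inclusion.

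For the reverse inclusion, let $D\in\mathcal{D}_J^*(A,A^*)$ and put $\delta:=D\circ *$. Then $\delta(1)=D(1)=0$, since a Jordan derivation annihilates the unit, so it remains to check that $\delta$ is a genuine ternary derivation for the $C^*$-product; this is the main obstacle. The route I would take is to express the $C^*$-triple product through the Jordan operations by the special-Jordan identity $\J abc=(a\circ b^*)\circ c+(c\circ b^*)\circ a-(a\circ c)\circ b^*$, carry $\delta$ across it using its conjugate-linearity, the compatibility $\J abc^{*}=\J{a^*}{b^*}{c^*}$, the Leibniz rule for the Jordan derivation $D$, and the intertwining $D(x^*)=D(x)^*$, and then match the outcome term by term with the three module contributions $\J{\delta(a)}bc$, $\J a{\delta(b)}c$, $\J ab{\delta(c)}$, each expanded by $(\ref{eq module product dual 1})$ and $(\ref{eq module product dual 2})$. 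The delicate point is the bookkeeping forced by the involution sitting in the middle slot of the $C^*$-product together with the conjugate-linearity of $\delta$: it is exactly the $*$-derivation property $D(x^*)=D(x)^*$ that reconciles the starred middle variable with the module actions and makes all terms cancel. Alternatively, one may reduce the full identity to the diagonal case $\delta\J aaa=2\J{\delta(a)}aa+\J a{\delta(a)}a$, where $\J aaa=aa^*a$, by the standard polarization for conjugate-linear triple derivations, and then verify this single identity directly from the Jordan and $*$-derivation properties of $D$.
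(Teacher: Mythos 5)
Your proposal is correct and follows essentially the same route as the paper: the first equivalence and the inclusion $\mathcal{D}_t^o(A,A^*)\subseteq \mathcal{D}_J^*(A,A^*)\circ *$ are obtained exactly as in the paper's proof (evaluating the derivation identity at $(a,1,b)$ and at $(1,a,1)$ to get $\delta(a^*)=\delta(a)^*$), and your converse is the paper's computation in slightly heavier form. The only difference is bookkeeping: instead of expanding the full identity in $(a,b,c)$, or reducing to the diagonal $\J aaa$ (where the polarization you invoke needs complex scalars and would not cover the real case the lemma also serves), the paper checks the outer-polarized identity $\delta\J aba = 2\J{\delta(a)}{b}{a}+\J{a}{\delta(b)}{a}$, whose residual term $2(a^*\circ D(b))\circ a^* - 2\left((D(b^*))^*\circ a^*\right)\circ a^*$ vanishes precisely by the $*$-derivation property --- exactly the cancellation mechanism you identified.
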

\begin{proof}
The first statement follows immediately from the definitions, that
is, $\J{\delta a}{1}{b}=D(a^*)\circ b^*$, $\J{a}{1}{\delta
b}=D(b^*)\circ a^*$, and $\delta\J a1b=D(a^*\circ b^*)$.\smallskip

Suppose next that $\delta\in \mathcal{D}_t^o(A,A^*)$. From  the
first statement, $D$ lies in ${\mathcal D}_J(A,A^*)$.  Actually $D$ is
$^*$-derivation; if $a\in A$ then $\delta(a^*)=\delta\J
1a1=\J{1}{\delta (a)}{1}$, so for all $y\in A$, we have
$$\pair{\delta(a^*)}{y}=\pair{\J{1}{\delta
(a)}{1}}{y}=\overline{\pair{\delta (a)}{\J 1y1}}=\pair{(\delta(
a))^*}{y},$$ and hence $D(a^*)=\delta (a)=(\delta(a^*))^*=(Da)^*$.

Suppose now that $D\in {\mathcal D}_J^*(A,A^*)$.  It follows from
the definitions and the fact that $D\in{\mathcal D}_J(A,A^*)$ that
the following three equations hold:
$$ \delta\J aba=2(D(a^*)\circ b)\circ a^*+2(a^*\circ D(b))\circ a^*+2(a^*\circ b)\circ D(a^*)$$
$$-2(D(a^*)\circ a^*)\circ b-(a^*\circ a^*)\circ D(b),$$
$$ \J{\delta (a)}{b}{a}=D(a^*)\circ(b\circ a^*)+(D(a^*)\circ b)\circ a^*-(D(a^*)\circ a^*)\circ b
$$
and
$$
\J{a}{\delta (b)}{a}=2((D(b^*))^*\circ a^*)\circ a^*-D(b)\circ
(a^*\circ a^*).
$$
From these three equations, we have
$$
\delta\J aba-2\J{\delta (a)}{b}{a}-\J{a}{\delta (b)}{a}=2(a^*\circ
D(b))\circ a^*-2((D(b^*))^*\circ a^*)\circ a^*.
$$
Since $D$ is self-adjoint, the right side of the last equation vanishes, and the result follows.
\end{proof}

\begin{proposition}\label{p 3.4.4}
Let $A$ be a unital Banach $^*$-algebra equipped with the ternary product given by $\J abc =
\frac12 \ ( a b^* c + cb^* a) $ and the Jordan product $a\circ b=(ab+ba)/2$. Then
$$\mathcal{D}_t(A,A^*)\subset \mathcal{D}_J^*(A,A^*)\circ *+\mathcal{I}nn_t(A,A^*).$$
If $A$ is Jordan weakly amenable, then
$$ \mathcal{D}_t(A,A^*)=\mathcal{I}nn_b^*(A,A^*)\circ *+\mathcal{I}nn_t(A,A^*).$$
\end{proposition}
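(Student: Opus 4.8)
The plan is to derive both assertions by assembling the structural lemmas already proved, namely the splitting in Lemma~\ref{l 3.4.3}, the identification in Lemma~\ref{lem:4.5}, and the inner-derivation comparison in Lemma~\ref{l 3.4.2}(i); essentially no new computation is required. For the first (unconditional) inclusion I would chain the first two lemmas. Lemma~\ref{l 3.4.3} writes an arbitrary $\delta\in\mathcal{D}_t(A,A^*)$ as $\delta_0+\delta_1$ with $\delta_0\in\mathcal{D}_t^o(A,A^*)$ and $\delta_1=-\frac12\delta(1,\delta(1))\in\mathcal{I}nn_t(A,A^*)$, giving $\mathcal{D}_t(A,A^*)=\mathcal{D}_t^o(A,A^*)+\mathcal{I}nn_t(A,A^*)$; the concluding identity of Lemma~\ref{lem:4.5} then replaces $\mathcal{D}_t^o(A,A^*)$ by $\mathcal{D}_J^*(A,A^*)\circ *$, which yields the stated inclusion (in fact an equality).

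For the equality under the assumption that $A$ is Jordan weakly amenable, I would argue the two inclusions separately. For the forward inclusion, take $\delta\in\mathcal{D}_t(A,A^*)$ and use the first part to write $\delta=D\circ *+\delta_1$ with $D\in\mathcal{D}_J^*(A,A^*)$ and $\delta_1\in\mathcal{I}nn_t(A,A^*)$. Since $D$ is in particular a continuous Jordan derivation into $A^*$, Jordan weak amenability gives $D\in\mathcal{I}nn_J(A,A^*)$; being also a $*$-derivation, $D$ therefore lies in $\mathcal{I}nn_J^*(A,A^*)$, and Lemma~\ref{l 3.4.2}(i) upgrades this to $D\in\mathcal{I}nn_b^*(A,A^*)$. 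Hence $\delta\in\mathcal{I}nn_b^*(A,A^*)\circ *+\mathcal{I}nn_t(A,A^*)$.

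For the reverse inclusion (which, I note, needs no amenability hypothesis) I would observe that $\mathcal{I}nn_t(A,A^*)\subseteq\mathcal{D}_t(A,A^*)$ by definition, while any $D\in\mathcal{I}nn_b^*(A,A^*)$ is an associative binary derivation, hence a Jordan derivation, and a $*$-derivation, so $D\in\mathcal{D}_J^*(A,A^*)$; Lemma~\ref{lem:4.5} then places $D\circ *$ in $\mathcal{D}_t^o(A,A^*)\subseteq\mathcal{D}_t(A,A^*)$. As $\mathcal{D}_t(A,A^*)$ is a real linear space, the whole sum $\mathcal{I}nn_b^*(A,A^*)\circ *+\mathcal{I}nn_t(A,A^*)$ is contained in it, completing the equality.

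I expect the only real point of care to be the bookkeeping of the conjugate-linear twist $\circ *$ together with the four families $\mathcal{D}_J^*$, $\mathcal{I}nn_J^*$, $\mathcal{I}nn_b^*$, $\mathcal{I}nn_t$, making sure each membership claim respects both the Jordan/ternary/binary type and the $*$-property. The one genuinely substantive step is the passage from $\mathcal{D}_J^*(A,A^*)$ to $\mathcal{I}nn_J^*(A,A^*)\subseteq\mathcal{I}nn_b^*(A,A^*)$ in the forward inclusion: this is exactly where Jordan weak amenability is used to turn an abstract Jordan $*$-derivation into an explicit inner associative one, and everything else is an unconditional rearrangement of the two decomposition lemmas.
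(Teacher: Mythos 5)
Your proof is correct and follows essentially the same route as the paper's: decompose $\delta=\delta_0+\delta_1$ via Lemma~\ref{l 3.4.3}, identify $D=\delta_0\circ *$ as an element of $\mathcal{D}_J^*(A,A^*)$ via Lemmas~\ref{lem:4.1} and~\ref{lem:4.5}, and under Jordan weak amenability upgrade $D$ to $\mathcal{I}nn_b^*(A,A^*)$ through Lemma~\ref{l 3.4.2}(i). Your only departures are cosmetic and sound: where the paper dismisses the reverse inclusion $\mathcal{I}nn_b^*(A,A^*)\circ *\subset\mathcal{D}_t(A,A^*)$ as ``a simple calculation,'' you derive it from Lemma~\ref{lem:4.5} via $\mathcal{I}nn_b^*\subset\mathcal{D}_J^*$, and you correctly observe that the first inclusion is in fact an equality and that the reverse inclusion needs no amenability hypothesis.
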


\begin{proof}
Let $\delta : A \to A^*$ be a ternary derivation. By Lemma \ref{l 3.4.3}, $\delta=\delta_0+\delta_1$, where
$\delta_0\in \mathcal{D}_t^{o}(A,A^*)$, $\delta_1 (a)= -\frac{1}{2}\delta(1,\delta(1)) (a) = \delta(1)\circ a^*$.
Lemmas~\ref{l ternary and associative derivations} and \ref{lem:4.5}
assure that $D = \delta_0 \circ *$, is a Jordan $^*$-derivation. This proves the first statement.

The assumed Jordan weak amenability of $A$, together with Lemma~\ref{l 3.4.2}
implies that $D = \delta_0 \circ *$ lies in $\mathcal{I}nn_b^*(A,A^*)$, which gives
$\delta = D\circ * +\delta_1 \in \mathcal{I}nn_b^*(A,A^*)\circ *+\mathcal{I}nn_t(A,A^*)$.
Since a simple calculation shows that $\mathcal{I}nn_b^*(A,A^*)\subset \mathcal{D}_t(A,A^*)$,
the reverse inclusion holds, proving the  second statement.\end{proof}

When a Banach $^*$-algebra $A$ is  commutative, we have $\mathcal{I}nn_b(A,A^*) =\{0\}.$ In the setting of unital and commutative Banach $^*$-algebras,
the above Proposition \ref{p 3.4.4} implies the following:

\begin{corollary}\label{c 3.4.5}
Let $A$ be a unital and commutative Banach $^*$-algebra.
Then $A$ is ternary weakly amenable whenever it is Jordan weakly amenable.$\hfill\Box$
\end{corollary}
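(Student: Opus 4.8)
The plan is to deduce the corollary almost directly from the second assertion of Proposition \ref{p 3.4.4}, with the commutativity hypothesis used only to kill the associative-inner part of the decomposition. First I would record the elementary fact stated in the paragraph just before the corollary: for a commutative Banach $^*$-algebra $A$ one has $\mathcal{I}nn_b(A,A^*)=\{0\}$. Indeed, for $\varphi\in A^*$ and $a\in A$ the inner associative derivation satisfies $D_\varphi(a)(y)=\varphi(ay)-\varphi(ya)=\varphi([a,y])$, which vanishes identically when $A$ is commutative. Consequently the subset of $^*$-derivations also collapses, $\mathcal{I}nn_b^*(A,A^*)\subseteq\mathcal{I}nn_b(A,A^*)=\{0\}$, so $\mathcal{I}nn_b^*(A,A^*)=\{0\}$.

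Next I would invoke the Jordan weak amenability hypothesis to apply the second formula of Proposition \ref{p 3.4.4}, namely
$$\mathcal{D}_t(A,A^*)=\mathcal{I}nn_b^*(A,A^*)\circ *+\mathcal{I}nn_t(A,A^*).$$
Since precomposition with the (bijective) involution $*$ sends the zero set to the zero set, the vanishing of $\mathcal{I}nn_b^*(A,A^*)$ forces $\mathcal{I}nn_b^*(A,A^*)\circ *=\{0\}$. Substituting this into the displayed identity yields $\mathcal{D}_t(A,A^*)=\mathcal{I}nn_t(A,A^*)$; that is, every continuous ternary derivation from $A$ into its dual is ternary inner, which is exactly the assertion that $A$ is ternary weakly amenable.

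The genuine content of the argument has already been carried out in Proposition \ref{p 3.4.4} (and the lemmas feeding it), where a ternary derivation is split as $\delta=\delta_0+\delta_1$ with $\delta_0\circ *$ a Jordan $^*$-derivation and $\delta_1$ an explicit inner ternary derivation, after which Jordan weak amenability replaces $\delta_0\circ *$ by an inner associative $^*$-derivation. Thus I expect no serious obstacle here: the only point requiring care is confirming that the commutativity of $A$ really annihilates $\mathcal{I}nn_b^*(A,A^*)$ and that composing with $*$ preserves this, so that the associative-inner summand contributes nothing and the entire ternary derivation lands in $\mathcal{I}nn_t(A,A^*)$.
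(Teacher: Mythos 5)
Your proof is correct and is exactly the paper's intended argument: the paper proves the corollary (stated with $\hfill\Box$) by the remark immediately preceding it that $\mathcal{I}nn_b(A,A^*)=\{0\}$ for commutative $A$, combined with the second identity of Proposition \ref{p 3.4.4}, which is precisely your route. Nothing is missing.
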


Every C$^*$-algebra $A$ is binary weakly
amenable (cf. \cite{Haa83}), and by \cite[Theorem 19 or Corollary 21]{PeRu},
every Jordan derivation $D: A \to A^*$ is continuous,
and hence an associative derivation by Johnson's Theorem \cite{John96}.
This gives us the next corollary.

\begin{corollary}\label{c 3.4.6}
Every unital and commutative (real or complex) C$^*$-algebra is ternary weakly amenable.$\hfill\Box$
\end{corollary}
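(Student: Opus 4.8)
The plan is to reduce the statement to a weak amenability property that is already available, and then to let commutativity annihilate everything that remains. By Corollary~\ref{c 3.4.5}, a unital commutative Banach $^*$-algebra is ternary weakly amenable as soon as it is Jordan weakly amenable, so the whole task collapses to verifying that a unital commutative C$^*$-algebra $A$ is Jordan weakly amenable. For the real case I would first pass to the complexification via Proposition~\ref{p complexification}, using that the complexification of a unital commutative real C$^*$-algebra is again a unital commutative (complex) C$^*$-algebra; thus it suffices to treat the complex case.

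To establish Jordan weak amenability of a unital commutative complex C$^*$-algebra $A$, I would take an arbitrary (bounded) Jordan derivation $D:A\to A^*$ and push it through three imported results in succession. First, by \cite[Cor.~21]{PeRu} every Jordan derivation from a C$^*$-algebra into its dual is automatically continuous, so the boundedness hypothesis is not even needed. Second, Johnson's theorem \cite{John96} upgrades a continuous Jordan derivation from a C$^*$-algebra into a Banach bimodule into a genuine associative derivation, so that $D\in\mathcal{D}_b(A,A^*)$. Third, Haagerup's theorem \cite{Haa83} says that every C$^*$-algebra is binary weakly amenable, whence $D$ is binary inner: $D=D_\varphi$ for some $\varphi\in A^*$.

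The final step is where commutativity does all the work, and it is precisely what makes the corollary essentially immediate rather than analytically deep. As recorded just before Corollary~\ref{c 3.4.5}, for commutative $A$ one has $\mathcal{I}nn_b(A,A^*)=\{0\}$, since $a\varphi=\varphi a$ for all $a\in A$ and $\varphi\in A^*$ when the product is abelian. Hence $D=D_\varphi=0$, and the zero map is trivially an inner Jordan derivation, so $A$ is Jordan weakly amenable; Corollary~\ref{c 3.4.5} then delivers ternary weak amenability. I expect no genuine obstacle at the level of the Jordan/ternary bookkeeping: the real content is entirely outsourced to the theorems of Haagerup and Johnson (together with the Peralta--Russo automatic continuity), and the only new ingredient is the observation that the binary inner derivations all vanish in the commutative setting, which forces $D$ itself to vanish.
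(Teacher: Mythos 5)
Your proposal is correct and takes essentially the same route as the paper: the paper's own (one-paragraph) proof of Corollary \ref{c 3.4.6} is precisely the chain Peralta--Russo automatic continuity $\Rightarrow$ Johnson's theorem $\Rightarrow$ Haagerup's binary weak amenability, combined with Corollary \ref{c 3.4.5} and the observation, recorded just before it, that $\mathcal{I}nn_b(A,A^*)=\{0\}$ for commutative $A$, which forces the binary inner derivation $D_\varphi$ to vanish. Your explicit handling of the real case via Proposition \ref{p complexification} and your remark that the zero map is trivially Jordan inner merely spell out steps the paper leaves implicit.
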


The following corollary of Proposition~\ref{p 3.4.4} will be used in the next section. The proof consists in observing that
Lemmas~\ref{lem:4.1}, \ref{l 3.4.3} and \ref{lem:4.5} are valid in this context and using
\cite[Theorem 3.2]{BunPasch}.

\begin{corollary}\label{cor:4.9}
Let $M$ be a semifinite von Neumann algebra and consider the submodule $M_*\subset M^*$.  Then
$$
{\mathcal D}_t(M,M_*)={\mathcal I}nn_b^*(M,M_*)\circ *+{\mathcal I}nn_t(M,M_*).
$$
\end{corollary}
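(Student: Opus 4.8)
The plan is to reduce the statement for a semifinite von Neumann algebra $M$ (with the predual module $M_*$) to the already-established machinery of Proposition~\ref{p 3.4.4}. The hypothesis of that proposition is phrased for a unital Banach $^*$-algebra with $X = A^*$, whereas here the module is $M_*$ rather than the full dual $M^*$. The first task is therefore to verify that the chain of preparatory lemmas leading into Proposition~\ref{p 3.4.4} goes through verbatim when $A^*$ is replaced by the submodule $M_*$. Concretely, I would check that Lemma~\ref{lem:4.1} (the equivalence between ternary derivations vanishing at $1$ and Jordan derivations), Lemma~\ref{l 3.4.3} (the splitting $\delta = \delta_0 + \delta_1$ with $\delta_1 = -\tfrac12\delta(1,\delta(1))$), and Lemma~\ref{lem:4.5} (the passage between Jordan $^*$-derivations and elements of $\mathcal{D}_t^o$) all remain valid when the target is $M_*$. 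Each of these is a purely algebraic manipulation of the triple and Jordan products together with the involution on the dual, and the only point requiring attention is that $M_*$ is a $^*$-closed, weak$^*$-dense submodule of $M^*$ on which the relevant products $\J abc$, $a\circ b$, and the involution $\varphi^*$ are all well-defined; since $M_*$ is a two-sided self-adjoint submodule invariant under these operations, the computations close up inside $M_*$.

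Once those three lemmas are in place, the proof of Proposition~\ref{p 3.4.4} applies and yields, for any $\delta \in \mathcal{D}_t(M,M_*)$, the decomposition
\begin{equation*}
\delta = \delta_0 + \delta_1, \qquad \delta_0 \in \mathcal{D}_t^o(M,M_*),\quad \delta_1 \in \mathcal{I}nn_t(M,M_*),
\end{equation*}
together with the identification $D := \delta_0 \circ *\ \in \mathcal{D}_J^*(M,M_*)$. It remains to show that this Jordan $^*$-derivation $D$ is in fact inner as an associative derivation, i.e.\ $D \in \mathcal{I}nn_b^*(M,M_*)$. This is exactly where the semifiniteness enters: I would invoke \cite[Theorem~3.2]{BunPasch}, which handles derivations of a semifinite von Neumann algebra into its predual. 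The step corresponding to ``Jordan weak amenability of $A$'' in the proof of Proposition~\ref{p 3.4.4} is supplied here by the Bunce--Paschke result, which guarantees that every (Jordan, hence binary via the continuity and Johnson-type results) derivation $D : M \to M_*$ is inner, and that the inner implementing functional can be chosen antisymmetric so as to preserve the $^*$-structure, placing $D$ in $\mathcal{I}nn_b^*(M,M_*)$.

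The main obstacle I anticipate is the transition from $D$ being a Jordan $^*$-derivation into $M_*$ to its being an associative inner derivation into $M_*$: one must be sure that Johnson's theorem (every continuous Jordan derivation of a C$^*$-algebra into a bimodule is an associative derivation) and the Bunce--Paschke innerness both apply with $M_*$ as the coefficient module and deliver an implementing element whose corresponding functional satisfies $\varphi^* = -\varphi$, which by Lemma~\ref{l 3.4.2}$(ii)$ is precisely the condition forcing the inner derivation to be a $^*$-derivation. Combining $D = \delta_0 \circ * \in \mathcal{I}nn_b^*(M,M_*)$ with $\delta_0 = D \circ *$ and the inner term $\delta_1$ gives
\begin{equation*}
\delta = \delta_0 + \delta_1 \in \mathcal{I}nn_b^*(M,M_*)\circ *\ +\ \mathcal{I}nn_t(M,M_*),
\end{equation*}
and the reverse inclusion follows as in Proposition~\ref{p 3.4.4} from the elementary observation that $\mathcal{I}nn_b^*(M,M_*) \subset \mathcal{D}_t(M,M_*)$. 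This establishes the claimed equality.
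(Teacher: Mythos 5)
Your proposal is correct and takes essentially the same route as the paper, whose proof is precisely the observation that Lemmas \ref{lem:4.1}, \ref{l 3.4.3} and \ref{lem:4.5} remain valid with the $^*$-invariant submodule $M_*$ in place of $A^*$, after which \cite[Theorem 3.2]{BunPasch} substitutes for the Jordan weak amenability hypothesis in the argument of Proposition \ref{p 3.4.4}. One small superfluous step: you need not arrange the implementing functional to satisfy $\varphi^*=-\varphi$, since $D=\delta_0\circ *$ is already a $^*$-derivation and its innerness as a binary derivation (Bunce--Paschke, via automatic continuity and Johnson's theorem) alone places it in $\mathcal{I}nn_b^*(M,M_*)$.
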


Our next proposition shows that Corollary~\ref{c 3.4.6}  remains valid in the setting of
non-necessarily-unital abelian C$^*$-algebras.

\begin{proposition}\label{p 3.4.7}
Every commutative (real or complex) C$^*$-algebra is \linebreak
ternary weakly amenable.
\end{proposition}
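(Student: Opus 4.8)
The plan is to reduce to the complex unital case by passing to the bidual, and then to transport the resulting inner derivation back down to $A$ by a factorization argument. First, by Proposition~\ref{p complexification} it suffices to treat a \emph{complex} commutative C$^*$-algebra $A$, since a real commutative C$^*$-algebra complexifies to one. Let $\delta\in\mathcal{D}_t(A,A^*)$; it is automatically continuous by \cite[Cor.~15]{PeRu}. Proposition~\ref{p bidual extensions} then produces a weak$^*$-continuous ternary derivation $\delta^{**}:A^{**}\to A^{***}$ with $\delta^{**}(A^{**})\subseteq A^{*}$. The whole point of this step is that $A^{**}$ is a \emph{unital} commutative von Neumann algebra, so the unital machinery of Lemmas~\ref{l delta(1) anti-symmetric}, \ref{l 3.4.3}, \ref{lem:4.5}, \ref{lem:4.1} and of Proposition~\ref{p 3.4.4} becomes available for $\delta^{**}$.

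Next I would apply Proposition~\ref{p 3.4.4} to $A^{**}$. That algebra is Jordan weakly amenable for a soft reason: every Jordan derivation $A^{**}\to (A^{**})^*$ is continuous by \cite[Cor.~21]{PeRu}, hence an associative derivation by Johnson's theorem \cite{John96}, hence inner by Haagerup's weak amenability \cite{Haa83}, and inner derivations into the dual of a commutative algebra vanish; thus $\mathcal{D}_J(A^{**},A^{***})=\{0\}$. Since moreover $\mathcal{I}nn_b^*(A^{**},A^{***})=\{0\}$ by commutativity, Proposition~\ref{p 3.4.4} forces $\delta^{**}\in\mathcal{I}nn_t(A^{**},A^{***})$, and by the decomposition of Lemma~\ref{l 3.4.3} in fact $\delta^{**}=\delta_1=-\tfrac12\delta(1,\xi)$, where $\xi:=\delta^{**}(1)\in A^{*}$ satisfies $\xi^*=-\xi$ by Lemma~\ref{l delta(1) anti-symmetric}. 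Restricting $\delta^{**}$ to $A$ and using commutativity, this reads $\delta(a)(y)=\xi(a^* y)$ for all $a,y\in A$.

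It then remains to realize $\delta$ as a genuine inner derivation \emph{using elements of $A$ and $A^*$}, the difficulty being that $\xi=\delta^{**}(1)$ was manufactured with the unit $1\in A^{**}\setminus A$. Unwinding the module products $(\ref{eq module product dual 1})$--$(\ref{eq module product dual 2})$ in the commutative setting, one computes that a single inner derivation $\delta(b,\varphi)$ with $b\in A$, $\varphi\in A^*$ acts by $\delta(b,\varphi)(a)(y)=\bigl((b\cdot\varphi)^*-(b\cdot\varphi)\bigr)(a^* y)$, where $(b\cdot\varphi)(z):=\varphi(bz)$. Comparing this with $\delta(a)(y)=\xi(a^* y)$ and using $\xi^*=-\xi$, it suffices to solve $b\cdot\varphi=-\tfrac12\xi$ in $A^*$: indeed then $(b\cdot\varphi)^*=\tfrac12\xi$ and $(b\cdot\varphi)^*-(b\cdot\varphi)=\xi$, giving $\delta=\delta(b,\varphi)\in\mathcal{I}nn_t(A,A^*)$.

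The solvability of $b\cdot\varphi=-\tfrac12\xi$ is the crux, and it is settled by factorization. Since $A$ has a bounded approximate identity $(e_\lambda)$ and, realizing $\xi$ as a finite Radon measure on the spectrum, $\|e_\lambda\cdot\xi-\xi\|\to 0$ by dominated convergence, the module $A^*$ is essential; Cohen's factorization theorem then yields $b\in A$ and $\varphi\in A^*$ with $b\cdot\varphi=-\tfrac12\xi$. This completes the proof in the complex case, and Proposition~\ref{p complexification} delivers the real case. I expect this final descent to be the main obstacle: the bidual computation only exhibits $\delta^{**}$ as inner at the level of $A^{**}$, and it is precisely the essentiality of $A^*$ together with Cohen factorization that licenses replacing the bidual unit by an honest pair $(b,\varphi)\in A\times A^*$.
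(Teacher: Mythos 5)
Your proposal is correct and follows essentially the same route as the paper's own proof: extend $\delta$ to the bidual via Proposition~\ref{p bidual extensions}, invoke the unital commutative case (Corollary~\ref{c 3.4.6} together with Lemma~\ref{l 3.4.3}) to write $\delta^{**}=-\frac{1}{2}\delta(1,\delta^{**}(1))$, and then use Cohen's factorization theorem with the bounded approximate unit to replace the bidual unit by a pair $(b,\varphi)\in A\times A^*$, concluding by commutativity that $\delta=\delta(b,\varphi)$ is a single inner derivation. Your cosmetic variations --- reducing the real case via Proposition~\ref{p complexification}, factoring $b\cdot\varphi=-\frac{1}{2}\xi$ instead of $\varphi b=\frac{1}{2}\delta^{**}(1)$, and spelling out the essentiality of the module $A^*$ that licenses Cohen's theorem --- do not change the argument.
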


\begin{proof}
Let $A$ be a commutative C$^*$-algebra and let $\delta: A \to A^*$
be a ternary derivation. By Proposition \ref{p bidual extensions},
$\delta^{**} : A^{**} \to A^{***}$ is a weak$^*$-continuous
ternary derivation with $\delta^{**} (A^{**}) \subseteq A^{*}$.
Since $A^{**}$ is a unital and commutative (real or complex)
C$^*$-algebra,
$\mathcal{D}_t(A^{**},A^{***})=\mathcal{I}nn_t(A^{**},A^{***})$
and $\delta^{**}$ may be written in the form $ \delta^{**}=
-\frac{1}{2}\delta(1,\delta^{**}(1))$ (compare Corollary \ref{c
3.4.6} and Lemma \ref{l 3.4.3}).\smallskip

Since every C$^*$-algebra admits a bounded approximate unit (cf.
\cite[Theorem 1.4.2]{Ped}), by Cohen's Factorisation Theorem (cf.
\cite[Theorem VIII.32.22]{HewRoss}), there exist $b\in A$ and
$\varphi\in A^*$ such that $\frac12 \delta^{**} (1) = \varphi b$.
Finally, for each $a$ in $A$ we have
$$ \delta (a) = -\frac{1}{2}\delta(1,\delta^{**}(1)) (a)
=\delta(1,\varphi b) (a) = \J {1}{\varphi b}{a} - \J {\varphi
b}{1}{a} $$
$$= \hbox{ (by the commutativity of $A$) }
= \J {b}{\varphi}{a} - \J {\varphi}{b}{a}
= \delta (b,\varphi) ({a}),$$ which gives $\delta =  \delta (b,\varphi)$.
\end{proof}

\section{C$^*$-algebras are not ternary weakly amenable}

In this section we present some examples of C$^*$-algebras
which are not ternary weakly amenable.

\begin{lemma}\label{lem:4.11}
The C$^*$-algebra $A=K(H)$ of all compact operators on an
infinite dimensional Hilbert space $H$  is not Jordan weakly amenable.
\end{lemma}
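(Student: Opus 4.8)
The plan is to produce an explicit continuous Jordan derivation $D\colon K(H)\to K(H)^*$ that is not inner, thereby witnessing the failure of Jordan weak amenability. I will use the identification $K(H)^*=T(H)$, the trace-class operators, with pairing $\langle\varphi,a\rangle=\mathrm{tr}(t_\varphi a)$, and the trace functional will serve as the obstruction to innerness. Concretely, let $p$ be a rank-one projection on $H$ and let $\varphi\in K(H)^*$ be the corresponding trace-class operator, so that $\mathrm{tr}(t_\varphi)=1$. I define $D_\varphi\colon K(H)\to K(H)^*$ by $D_\varphi(a)=\varphi a-a\varphi$. This is manifestly bounded (with $\|D_\varphi\|\le 2\|\varphi\|$) and is an associative inner derivation, hence in particular a continuous Jordan derivation.

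The heart of the argument is to show $D_\varphi\notin\mathcal{I}nn_{J}(K(H),K(H)^*)$. Suppose otherwise; then $D_\varphi=\sum_{j=1}^n\delta_{x_j,b_j}$ for some $x_j\in K(H)^*$ and $b_j\in K(H)$. By Lemma~\ref{l 3.4.2}$(i)$ each summand equals $D_{\frac14[b_j,x_j]}$, so $D_\varphi=D_\psi$ with $\psi=\frac14\sum_{j}[b_j,x_j]$. Translating the module actions $a\varphi(y)=\varphi(ya)$ and $\varphi a(y)=\varphi(ay)$ into the trace-class picture shows that the commutator $[b_j,x_j]$ corresponds to the trace-class operator $b_j t_{x_j}-t_{x_j}b_j$, whose trace vanishes since $t_{x_j}$ is trace class and $b_j$ is bounded. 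Consequently $\mathrm{tr}(t_\psi)=0$.

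It remains to rule out $D_\varphi=D_\psi$ while $\mathrm{tr}(t_\varphi)=1$ and $\mathrm{tr}(t_\psi)=0$. For this I will establish that $\chi\mapsto D_\chi$ is injective on $K(H)^*$: if $D_\chi=0$ then $\mathrm{tr}\big((t_\chi a-a\,t_\chi)y\big)=0$ for all $a,y\in K(H)$, so $t_\chi$ commutes with every compact operator; since $K(H)$ acts irreducibly on $H$ its commutant in $B(H)$ is $\mathbb{C}\,1$, and as $1\notin T(H)$ when $\dim H=\infty$ this forces $t_\chi=0$. Hence $D_\varphi=D_\psi$ gives $\varphi=\psi$ and therefore $1=\mathrm{tr}(t_\varphi)=\mathrm{tr}(t_\psi)=0$, a contradiction. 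Thus $D_\varphi$ is a continuous Jordan derivation that is not inner, so $K(H)$ is not Jordan weakly amenable.

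The step I expect to require the most care is the bookkeeping connecting the abstract module actions on $K(H)^*$ to the concrete left and right multiplications on $T(H)$ (including the sign and ordering in the commutator), together with the use of infinite-dimensionality in the injectivity argument; it is precisely here that the hypothesis $\dim H=\infty$ enters, since in finite dimensions the identity is trace class and the trace obstruction disappears.
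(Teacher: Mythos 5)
Your proof is correct, and it shares the paper's overall skeleton --- reduce a Jordan-inner derivation to a binary inner derivation $D_\chi$ implemented by $\chi=\frac14\sum_j[b_j,x_j]$ via Lemma~\ref{l 3.4.2}$(i)$, then invoke the vanishing of the trace on commutators of bounded with trace-class operators --- but it diverges at the decisive final step. The paper closes the argument with the Pearcy--Topping theorem (every compact operator on a separable infinite-dimensional Hilbert space is a finite sum of commutators of compact operators), together with a spectral-theoretic reduction of a general $z\in K(H)$ to a separable subspace $H_0$ with $z\in K(H_0)$; since commutators then exhaust $K(H)$, agreement of the functionals $\psi$ and $-\frac14\big(\sum_j b_j\varphi_j-\varphi_j b_j\big)$ on commutators forces them to be equal, whence $\mathrm{tr}(\psi)=0$ for every $\psi$, a contradiction. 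You instead prove that $\chi\mapsto D_\chi$ is injective on $K(H)^*$: if $D_\chi=0$ then $t_\chi a=a t_\chi$ for all compact $a$ (using faithfulness of the pairing between $T(H)$ and $K(H)$), so $t_\chi$ lies in the commutant $\mathbb{C}\,1$ of the irreducible algebra $K(H)$, and $1\notin T(H)$ in infinite dimensions gives $t_\chi=0$. This buys you a genuinely more elementary and self-contained proof: no deep commutator theorem and no separability reduction, with the hypothesis $\dim H=\infty$ entering transparently through $1\notin T(H)$. Note also that although you instantiate at a single rank-one $\varphi$, your injectivity lemma immediately yields the stronger statement the paper actually reuses later (in Lemma~\ref{lem:4.11 L(H)} and Propositions~\ref{p compact operators is not ternary weakly amen} and \ref{p bd operators is not ternary weakly amen}): $D_\psi$ is not Jordan-inner for \emph{any} $\psi$ of nonzero trace; indeed your commutant trick could replace Pearcy--Topping in those ternary arguments as well, where the paper again needs to upgrade agreement on commutators to equality of functionals. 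What the paper's route buys in exchange is the explicit identification of $\psi$ itself as a finite sum of commutators of compact and trace-class operators, which makes the trace obstruction visible directly on $\psi$.
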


\begin{proof}
We shall identify $A^*$ with the trace-class operators on $H$.\smallskip

Supposing that $A$ were Jordan weakly amenable, let $\psi\in A^*$ be arbitrary.
Then $D_\psi$ would be an inner Jordan derivation, so there would exist $\varphi_j\in A^*$
and $b_j\in A$ such that $D_\psi(x)=\sum_{j=1}^n[\varphi_j\circ(b_j\circ x)-b_j\circ(\varphi_j\circ x)]$
 for all $x\in A$.\smallskip

For $x,y\in A$, a direct calculation yields
$$ \psi(xy-yx)=-\frac{1}{4}\left(\sum_{j=1}^n b_j \varphi_j-\varphi_j b_j\right)(xy-yx). $$\smallskip

It is known (\cite[Theorem 1]{PeaTop71}) (see also the excellent survey \cite{Weiss04})
that every compact operator on a separable infinite dimensional Hilbert space is
a finite sum of commutators of compact operators. Let $z$ be any element in $A= K(H)$.
By standard spectral theory, we can find a separable infinite dimensional Hilbert
subspace $H_0\subseteq H$ such that $z\in K(H_0)$,
that is, $z = p z = z p$, where $p$ is the orthogonal projection of $H$ onto $H_0$.
By the just quoted theorem of Pearcy and Topping, $z$ can be written as a
finite sum of commutators $[x,y]= xy -yx$ of elements $x,y$ in $K(H_0) = p K(H) p\subseteq K(H).$
Thus, it follows that the trace-class operator
$\psi = -\frac{1}{4}\left(\sum_{j=1}^n b_j \varphi_j-\varphi_j b_j\right)$ is a finite sum of
commutators of compact and trace-class operators, and hence has trace zero.
This is a contradiction, since $\psi$ was arbitrary.
\end{proof}

\begin{proposition}\label{p compact operators is not ternary weakly amen}
The C$^*$-algebra $A=K(H)$ of all compact operators on
an infinite dimensional Hilbert space $H$  is not ternary weakly amenable.
\end{proposition}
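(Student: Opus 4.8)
The plan is to exhibit an explicit continuous ternary derivation $\delta : A \to A^{*}$ that fails to be ternary inner, the obstruction being, exactly as in Lemma~\ref{lem:4.11}, that the trace of any commutator of a compact and a trace-class operator vanishes. I identify $A^{*}$ with the trace-class operators on $H$, fix a rank-one projection $P$, and set $\psi := iP \in A^{*}$, so that $\psi^{*} = -\psi$ while $\mathrm{Tr}\,\psi = i \neq 0$. Since the linear span of the commutators of $A = K(H)$ is norm-dense, Lemma~\ref{l 3.4.2}$(ii)$ shows that the inner binary derivation $D_{\psi}(a) = \psi a - a\psi$ is a $^{*}$-derivation, hence a Jordan $^{*}$-derivation. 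First I would check that $\delta := D_{\psi}\circ *$, i.e.\ $\delta(a) = D_{\psi}(a^{*})$, is a continuous ternary derivation $A \to A^{*}$; this is the content of the computation in Lemma~\ref{lem:4.5}, whose verification of the derivation identity $\delta\{a,b,a\}=2\{\delta a,b,a\}+\{a,\delta b,a\}$ (and its polarization) does not use the unit. To stay strictly within the unital hypotheses one may instead form $D_{\psi}^{**}$ on the unital bidual $A^{**} = B(H)$ via Proposition~\ref{p bidual extensions} and restrict to $A$.

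Next I would assume, toward a contradiction, that $\delta$ is ternary inner, say $\delta = \sum_{i=1}^{n} \delta(b_{i}, \varphi_{i})$ with $b_{i} \in A$ and $\varphi_{i} \in A^{*}$; write $S_{i}$ for the trace-class operator representing $\varphi_{i}$ and $T = iP$ for the operator representing $\psi$. Expanding each inner piece $\delta(b_{i},\varphi_{i})$ through the module products (\ref{eq module product dual 1}) and (\ref{eq module product dual 2}), undoing the involution, and pairing the resulting functional of $a$ against an arbitrary $y \in A$, the identity $\delta = \sum_{i} \delta(b_{i},\varphi_{i})$ reduces to the single operator identity $U' a + a V' = [T,a]$ for all $a \in A$, where $U' = \tfrac12\sum_{i}(b_{i}^{*}S_{i}^{*} - S_{i}b_{i})$ and $V' = \tfrac12\sum_{i}(S_{i}^{*}b_{i}^{*} - b_{i}S_{i})$. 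Rewriting this as $(U'-T)a + a(V'+T) = 0$ and testing on rank-one operators forces $U' - T = \lambda\,I$ and $V' + T = -\lambda\,I$ for some scalar $\lambda$; since $U'$, $V'$ and $T$ are trace-class, $\lambda = 0$, giving the two simultaneous conditions $T = U'$ and $T = -V'$.

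The contradiction then comes from the trace. Adding the two conditions gives $2\,\mathrm{Tr}\,T = \mathrm{Tr}\,U' - \mathrm{Tr}\,V'$, and since every product $b_{i}^{*}S_{i}^{*}$, $S_{i}b_{i}$, $S_{i}^{*}b_{i}^{*}$, $b_{i}S_{i}$ is trace-class, cyclicity yields $\mathrm{Tr}(b_{i}^{*}S_{i}^{*}) = \mathrm{Tr}(S_{i}^{*}b_{i}^{*})$ and $\mathrm{Tr}(S_{i}b_{i}) = \mathrm{Tr}(b_{i}S_{i})$, so the right-hand side cancels term by term and $\mathrm{Tr}\,T = \mathrm{Tr}\,\psi = 0$. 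This contradicts $\mathrm{Tr}\,\psi = i$, so $\delta$ cannot be ternary inner, and $A = K(H)$ is not ternary weakly amenable.

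I expect the laborious step to be the reduction in the second paragraph: carefully carrying the conjugate-linearity and the three distinct module products through $\delta(b_{i},\varphi_{i})\circ *$ to reach the clean identity $U'a + aV' = [T,a]$. A subtle point worth stressing is that one must retain \emph{both} consequences $T = U'$ and $T = -V'$: either one alone only constrains $\mathrm{Tr}\,T$ to be purely imaginary, which is consistent with $\psi$ being skew and yields no contradiction; it is precisely their combination that forces $\mathrm{Tr}\,T = 0$.
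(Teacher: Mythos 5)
Your proof is correct, and its core is genuinely different from the paper's. You share the paper's setup: the paper also takes $\delta=D_\psi\circ *$, viewed first on the bidual $A^{**}=B(H)$ (this is exactly the content of Corollary~\ref{cor:4.9} there), and also finishes with a trace obstruction; and your observation that the converse half of Lemma~\ref{lem:4.5} never uses the unit is accurate, since the three displayed identities in its proof are unit-free and only the Jordan derivation property and $(D(b^*))^*=D(b)$ enter. But from the innerness hypothesis the paper proceeds differently: it keeps $\psi$ arbitrary, derives the functional identity $\psi(a^*x-xa^*)=\frac12\sum(\varphi_jb_j-b_j^*\varphi_j^*)(a^*x)+\frac12\sum(b_j\varphi_j-\varphi_j^*b_j^*)(xa^*)$, evaluates at $x=1$ to decouple the two sums, and then must invoke the Pearcy--Topping theorem \cite{PeaTop71} (with the separable-reduction trick from Lemma~\ref{lem:4.11}) to upgrade equality on commutators to equality of functionals, finally exhibiting $4\psi=\sum[\varphi_j,b_j]+\sum[\varphi_j^*,b_j^*]$ as a sum of commutators of trace zero. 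You instead pass to the operator identity $[T,a]=U'a+aV'$ (which I have checked, with the convention $\varphi_i(z)=\mathrm{Tr}(S_iz)$, yields exactly your $U'$ and $V'$), split it by rank-one testing into $U'-T=\lambda I$, $V'+T=-\lambda I$, kill $\lambda$ because these operators are trace-class on an infinite dimensional space, and conclude by cyclicity of the trace; this buys an argument that avoids Pearcy--Topping, the evaluation at $1$, and the separability reduction entirely, using only rank-one operators and $\mathrm{Tr}(ab)=\mathrm{Tr}(ba)$ for $a$ bounded and $b$ trace-class (note this is also where infinite-dimensionality enters, as it must, since $M_n(\CC)$ is ternary weakly amenable). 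Moreover, your insistence on a skew $\psi=iP$ is not mere convenience but a genuine improvement in care: by the first part of the ``Moreover'' in Lemma~\ref{lem:4.5}, any ternary derivation on the unital bidual vanishing at $1$ composes with $*$ to a Jordan $^*$-derivation, and $D_\psi$ is a $^*$-derivation precisely when $\psi^*=-\psi$; so for non-skew $\psi$ the map $D_\psi\circ*$ is not a ternary derivation at all, and the paper's ``arbitrary $\psi$'' phrasing tacitly reduces to skew $\psi$ of nonzero trace, which is exactly your choice. Two small remarks: density of commutators in $K(H)$ is not needed to apply Lemma~\ref{l 3.4.2}$(ii)$, since the implication you use ($\varphi^*=-\varphi$ implies $^*$-derivation) holds unconditionally, density being required only for the converse; and your closing subtlety checks out, because $\mathrm{Tr}(b_i^*S_i^*)=\overline{\mathrm{Tr}(S_ib_i)}$ makes $\mathrm{Tr}\,U'$ purely imaginary, so $T=U'$ alone is indeed consistent with $\mathrm{Tr}\,T=i$ and both identities are genuinely needed.
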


\begin{proof}
Let $\psi$ be an arbitrary element in $A^*$.
The binary inner derivation $D_{\psi}: x\mapsto \psi x-x \psi$
may be viewed as a map from either $A$ or $A^{**}$ into $A^*$.
Considered as a map on $A^{**}$, it belongs to ${\mathcal I}nn_b(A^{**},A^*)$
so by Corollary~\ref{cor:4.9},  $D_\psi\circ *: a\mapsto D_\psi (a^*),$
belongs to ${\mathcal D}_t(A^{**},A^*)$.\smallskip

Assuming that $A$ is ternary weakly amenable, the restriction of
$D_\psi\circ *$ to $A$ belongs to ${\mathcal I}nn_t(A,A^*)$.
Thus, there exist $\varphi_j\in A^*$ and $b_j\in A$ such that
$D_\psi\circ *=\sum_{j=1}^n (L(\varphi_j,b_j)-L(b_j,\varphi_j))$ on $A$.\smallskip

For $x,a\in A$, direct calculations yield
$$ \psi(a^*x-xa^*)=\frac{1}{2}\sum_{j=1}^n(\varphi_jb_j-b_j^*\varphi_j^*)(a^*x)
+\frac{1}{2}\sum_{j=1}^n(b_j\varphi_j-\varphi_j^*b_j^*)(xa^*).$$

We can and do set $x=1$  to get
\begin{equation}\label{eq:6810}
0=\frac{1}{2}\sum_{j=1}^n(\varphi_jb_j-b_j^*\varphi_j^*)(a^*)
+\frac{1}{2}\sum_{j=1}^n(b_j\varphi_j-\varphi_j^*b_j^*)(a^*),
\end{equation}
and therefore
\begin{equation}\label{eq:681}\psi(a^*x-xa^*)=\frac{1}{2}
\sum_{j=1}^n(\varphi_jb_j-b_j^*\varphi_j^*)(a^*x-xa^*),\end{equation} for every $a,x\in A$.\smallskip

Using \cite[Theorem 1]{PeaTop71} as in the proof of Lemma~\ref{lem:4.11}, and taking note of (\ref{eq:681}) and (\ref{eq:6810}),
we have
$$\psi=\frac{1}{2}\sum_{j=1}^n(\varphi_jb_j-b_j^*\varphi_j^*)=\frac{1}{2}\sum_{j=1}^n(\varphi_j^*b_j^*-b_j\varphi_j).$$
Hence
\begin{eqnarray*}
2\psi&=&\sum_{j=1}^n(\varphi_jb_j-b_j\varphi_j+b_j\varphi_j-\varphi_j^*b_j^*+\varphi_j^*b_j^*-b_j^*\varphi_j^*)\\
&=&\sum_{j=1}^n[\varphi_j,b_j]-2\psi+\sum_{j=1}^n[\varphi_j^*,b_j^*].
\end{eqnarray*}
Finally, the argument given at the end of the proof of Lemma~\ref{lem:4.11} shows that $\psi$ has trace 0,
which is a contradiction, since $\psi$ was arbitrary.
\end{proof}

Next we study the ternary weak amenability of the C$^*$-algebra $L(H)$ of all bounded
linear operators on a complex Hilbert space $H$. We shall recall first some standard
theory of von Neumann algebras.\smallskip

Given a von Neumann algebra $M$, with predual $M_*$ and dual $M^*$, there exists a (unique) central
projection $z_0$ in $M^{**}$ satisfying  $M_* = M^{*} z_0$. Moreover, denoting
$M_*^{\perp} = M^* (1-z_0)$ we have $M^{*} = M_* \oplus^{\ell_1} M_*^{\perp}$
(cf. \cite[Theorem III.2.14]{Tak}). $M_*$ (resp., $M_*^{\perp}$) is called the \emph{normal}
(resp., the \emph{singular}) part of $M^*$. Every functional $\phi$ in $M^*$ is uniquely decomposed into
the sum $$\phi = \phi_n  + \phi_s, \ \ \phi_n \in M_*, \ \phi_s\in M_*^{\perp}.$$
The functionals $\phi_n$  and $\phi_s$ are called respectively, the normal part and the
singular part of $\phi$. Since $z_0$ is a central projection in $M^{**}$, we can easily see that $$\left(\phi a\right)_n = \phi_n a, \ \left(\phi a\right)_s = \phi_s a, \ \left(a\phi \right)_n =a \phi_n , \ \left(a \phi \right)_s =a \phi_s, $$ $$\J {\phi}{a}{b}_n = \J {\phi_n}{a}{b}, \ \J {\phi}{a}{b}_s = \J {\phi_s}{a}{b},$$ $$\J {a}{\phi}{b}_n = \J {a}{\phi_n}{b},\ \hbox{and } \J {a}{\phi}{b}_s = \J {a}{\phi_s}{b},$$ for every $a,b\in M$ and $\phi\in M^{*}$.

\begin{lemma}\label{lem:4.11 L(H)}
The C$^*$-algebra $M=L(H)$ of all bounded linear operators on an
infinite dimensional Hilbert space $H$ is not Jordan weakly amenable.
\end{lemma}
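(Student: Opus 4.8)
The plan is to run the argument of Lemma~\ref{lem:4.11} verbatim at the level of the inner-Jordan-derivation computation, and then to replace the two ingredients that were special to $K(H)$ --- the Pearcy--Topping theorem and the honest trace on the trace-class operators --- by their analogues for the properly infinite factor $L(H)$, extracting the final contradiction from the \emph{normal part} of $M^*$. First I would suppose, towards a contradiction, that $M=L(H)$ is Jordan weakly amenable and fix an arbitrary $\psi\in M^*$. The binary inner derivation $D_\psi:x\mapsto \psi x-x\psi$ is in particular a bounded Jordan derivation from $M$ into $M^*$, so by hypothesis it is an inner Jordan derivation: there are $\varphi_j\in M^*$ and $b_j\in M$ with $D_\psi=\sum_{j=1}^n\delta_{\varphi_j,b_j}$. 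Carrying out the same direct calculation as in Lemma~\ref{lem:4.11}, I would obtain
\begin{equation*}
\psi(xy-yx)=\chi(xy-yx)\qquad(x,y\in M),
\end{equation*}
where $\chi:=-\frac14\sum_{j=1}^n\left(b_j\varphi_j-\varphi_j b_j\right)\in M^*$.

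The next step is to upgrade this to the identity $\psi=\chi$. Since $H$ is infinite dimensional, the unit of $L(H)$ is properly infinite (splitting $H\cong H\oplus H$ produces orthogonal projections Murray--von Neumann equivalent to $1$), so by Fack's theorem \cite{Fack} --- see also \cite{PeaTop71,Weiss04} and the Remark above --- every self-adjoint element, and hence every element, of $L(H)$ is a finite sum of commutators $xy-yx$; thus the closed linear span of all commutators is all of $M$. As $\psi-\chi$ is a bounded functional vanishing on every commutator, it vanishes identically, giving $\psi=\chi$. I emphasise that this replacement of ``$K(H)$ is spanned by commutators of compact operators'' by ``$1\in L(H)$ lies in the span of commutators'' is precisely the new ingredient, and it is what forces the hypothesis $\dim H=\infty$.

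The decisive and most delicate step is then to pass to the normal part. Applying the just-recorded formulas $(a\phi)_n=a\phi_n$ and $(\phi a)_n=\phi_n a$ associated with the central projection $z_0\in M^{**}$ (cf.\ \cite[Theorem III.2.14]{Tak}), I would compute
\begin{equation*}
\psi_n=\chi_n=-\frac14\sum_{j=1}^n\left(b_j(\varphi_j)_n-(\varphi_j)_n b_j\right).
\end{equation*}
Identifying the predual $M_*$ with the trace-class operators, each $(\varphi_j)_n$ corresponds to a trace-class operator $t_j$ and $\chi_n$ to $-\frac14\sum_j(b_jt_j-t_jb_j)$, a sum of commutators of a bounded operator with a trace-class operator; every such commutator has trace zero, whence $\psi_n(1)=\chi_n(1)=0$. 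But $\psi$ was arbitrary, so taking $\psi$ to be a normal state (for which $\psi_n=\psi$ and $\psi(1)=1$) yields $1=\psi_n(1)=0$, the desired contradiction. I expect the only real obstacle to be the bookkeeping that converts the abstract module commutator $\chi_n$ into the concrete trace-class commutator --- this is exactly where the normal/singular splitting of $M^*$ does the work that the trace on $K(H)$ did in Lemma~\ref{lem:4.11}.
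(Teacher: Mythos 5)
Your proof is correct, but it reaches the contradiction by a genuinely different route than the paper. The paper starts from a trace-class functional $\psi$ on $B=K(H)$ with nonzero trace, invokes Proposition~\ref{p bidual extensions} to realize $D_\psi^{**}\colon M\to M_*\subseteq M^*$, applies the normal/singular splitting \emph{at the level of the derivation identity} (using that $D_\psi^{**}$ is $M_*$-valued, while the $\phi_j$-part of the putative inner derivation lands in $M_*$ and the $\psi_j$-part in $M_*^{\perp}$) to replace the implementing functionals $\varphi_j\in M^*$ by their normal parts, and only then runs the Lemma~\ref{lem:4.11} argument, which needs just the Pearcy--Topping theorem (commutators of compacts span $K(H)$) to identify two \emph{normal} functionals and extract the trace contradiction. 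You instead work entirely inside $L(H)$ with an arbitrary $\psi\in M^*$: you first upgrade the commutator identity to the full equality $\psi=\chi$ in $M^*$, which requires the strictly stronger fact that commutators span (a dense subspace of) all of $L(H)$ --- valid because the unit of $L(H)$ is properly infinite, via Fack \cite{Fack}, and indeed this is exactly the property the paper's Remark records for properly infinite von Neumann algebras --- and only afterwards project onto normal parts using $(a\phi)_n=a\phi_n$, $(\phi a)_n=\phi_n a$, where the trace-class bookkeeping (each $b_j(\varphi_j)_n-(\varphi_j)_n b_j$ corresponds to a commutator of a bounded with a trace-class operator, hence has trace zero) gives $\psi_n(1)=0$ for every $\psi$, and a normal state kills the hypothesis. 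What each buys: your argument is more self-contained --- it needs neither the bidual-extension machinery of Proposition~\ref{p bidual extensions} nor Pearcy--Topping, and it never passes through $K(H)$ --- at the cost of the heavier commutator theorem for $B(H)$; the paper's route keeps all functionals trace-class before the span-of-commutators step (so the compact-operator theorem suffices) and, more significantly, exhibits the explicit non-inner derivation $D_\psi^{**}$ lifted from the $K(H)$ case, a witness that is reused essentially verbatim in the ternary analogue, Proposition~\ref{p bd operators is not ternary weakly amen}. The sign ambiguities in the various conventions for $a\varphi$, $\varphi a$ and $[b,\varphi]$ are harmless in both arguments, since only the trace-zero conclusion is used.
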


\begin{proof} Let $B= K(H)$ denote the ideal of all compact operators on $H$. We notice
that $B^{**} = M =L(H)$ and hence $M_* = B^*$ coincides with the trace-class operators on $H$.
Let $\psi$ be an element in $B^*$ whose
trace is not zero. The argument given in the proof of Lemma \ref{lem:4.11} guarantees that
the derivation $D_\psi : B \to B^*$, $a\mapsto \psi a - a \psi$ doesn't belong to
${\mathcal I}nn_J(B,B^*)$.\smallskip

By Proposition \ref{p bidual extensions} and its proof,  $D_\psi^{**} : B^{**}=M \to B^*=M_*\subseteq M^*$ is a Jordan
derivation whose image is contained in $M_*$. It can be easily checked that $D_\psi^{**} (x) = \psi x - x \psi,$
for every $x\in B^{**} =M$. We claim that $D_\psi^{**}$ is not an inner Jordan derivation. Otherwise, there exist $\varphi_j\in M^*$
and $b_j\in M$ such that $D_\psi^{**} (x)=\sum_{j=1}^n[\varphi_j\circ(b_j\circ x)-b_j\circ(\varphi_j\circ x)],$ for all $x\in M$.
For each $j$, let us write $\varphi_j = \phi_j + \psi_j$, where $\phi_j\in M_*$ and $\psi_j\in M_*^{\perp}$ are the normal
and singular part of $\varphi$, respectively. Since  $D_\psi^{**}$ remains $M_*$-valued and, for each $x$ in $M$, $\sum_{j=1}^n[\phi_j\circ(b_j\circ x)-b_j\circ(\phi_j\circ x)]\in M_*$ and $\sum_{j=1}^n[\psi_j\circ(b_j\circ x)-b_j\circ(\psi_j\circ x)]\in M_*^{\perp}$, it follows that $D_\psi^{**} (x)=\sum_{j=1}^n[\phi_j\circ(b_j\circ x)-b_j\circ(\phi_j\circ x)],$ for all $x\in M$, where, in this case, $\phi_j\in M_*$ and $b_j\in M$.\smallskip

Now, we can mimic the argument in the proof of Lemma \ref{lem:4.11} to show that
$\psi = -\frac{1}{4}\left(\sum_{j=1}^n b_j \phi_j-\phi_j b_j\right)$ is a finite sum of
commutators of bounded and trace-class operators, and hence has trace zero, which is impossible.
\end{proof}

Similar ideas to those applied in the previous lemma give us the following result.

\begin{proposition}\label{p bd operators is not ternary weakly amen}
The C$^*$-algebra $M=L(H)$ of all bounded linear operators on
an infinite dimensional Hilbert space $H$ is not ternary weakly amenable.
\end{proposition}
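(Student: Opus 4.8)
The plan is to produce one explicit continuous ternary derivation $\delta:M\to M^*$ and to show that its being inner forces a trace-class operator of nonzero trace to have trace $0$; this contradiction then shows $M=L(H)$ is not ternary weakly amenable. The whole argument runs parallel to the passage from Lemma~\ref{lem:4.11} to Proposition~\ref{p compact operators is not ternary weakly amen}, but is carried out directly on the von Neumann algebra $M=L(H)$, so that the functionals appearing in an inner representation now acquire singular parts. Writing $B=K(H)$, so that $M_*=B^*$ is the space of trace-class operators, I would first fix $\psi\in M_*$ with $\psi^*=-\psi$ and $\mathrm{tr}(\psi)\neq0$ (for instance $\psi=iP$ with $P$ a rank-one projection; a skew-adjoint trace-class operator may well have nonzero trace). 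Since $H$ is infinite dimensional, $M$ is properly infinite and the linear span of its commutators is norm dense, so Lemma~\ref{l 3.4.2}$(ii)$ shows that the binary inner derivation $D_\psi:x\mapsto\psi x-x\psi$ is a $^*$-derivation; as $\psi$ is trace class, $D_\psi$ takes values in $M_*$, i.e. $D_\psi\in\mathcal{I}nn_b^*(M,M_*)$. By Corollary~\ref{cor:4.9}, $\mathcal{I}nn_b^*(M,M_*)\circ *\subseteq\mathcal{D}_t(M,M_*)$, so $\delta:=D_\psi\circ *:a\mapsto\psi a^*-a^*\psi$ is a continuous ternary derivation from $M$ into $M_*\subseteq M^*$.

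Assume now, towards a contradiction, that $M=L(H)$ is ternary weakly amenable. Then $\delta\in\mathcal{I}nn_t(M,M^*)$, so there are $\varphi_j\in M^*$ and $b_j\in M$ with $\delta(a)=\sum_{j=1}^{n}\bigl(\J{b_j}{\varphi_j}{a}-\J{\varphi_j}{b_j}{a}\bigr)$ for all $a\in M$. The new feature, absent for $K(H)$, is that the $\varphi_j$ need not be normal; write $\varphi_j=\phi_j+\psi_j$ with $\phi_j\in M_*$ and $\psi_j\in M_*^{\perp}$. Since $z_0$ is central in $M^{**}$, the normal/singular splitting commutes with the triple module products, and the identities displayed just before Lemma~\ref{lem:4.11 L(H)} give $\J{b_j}{\varphi_j}{a}_s=\J{b_j}{\psi_j}{a}$ and $\J{\varphi_j}{b_j}{a}_s=\J{\psi_j}{b_j}{a}$. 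Because $\delta$ is $M_*$-valued, $\delta(a)_s=0$ for every $a$, whence $\sum_j\bigl(\J{b_j}{\psi_j}{a}-\J{\psi_j}{b_j}{a}\bigr)=0$; passing to normal parts then rewrites $\delta(a)=\sum_j\bigl(\J{b_j}{\phi_j}{a}-\J{\phi_j}{b_j}{a}\bigr)$ using only the trace-class functionals $\phi_j\in M_*$.

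With the $\varphi_j$ now replaced by normal $\phi_j$, the computation proceeds exactly as in Proposition~\ref{p compact operators is not ternary weakly amen}: expanding the triple products $\J{u}{v}{w}=\tfrac12(uv^*w+wv^*u)$, specialising the free slot to $1$, and combining the resulting identities would yield $\psi(a^*x-xa^*)=\tfrac12\sum_j(\phi_jb_j-b_j^*\phi_j^*)(a^*x-xa^*)$ for all $a,x\in M$. Restricting $a,x$ to $B=K(H)$ and invoking \cite[Theorem~1]{PeaTop71}, exactly as in Lemma~\ref{lem:4.11} (every compact operator on a separable infinite-dimensional subspace is a finite sum of commutators of compacts), identifies $\psi$ with $\tfrac12\sum_j(\phi_jb_j-b_j^*\phi_j^*)$ as elements of $M_*=K(H)^*$. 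Symmetrising as at the end of Proposition~\ref{p compact operators is not ternary weakly amen} then exhibits $\psi$ as a finite sum of commutators of bounded and trace-class operators, forcing $\mathrm{tr}(\psi)=0$ and contradicting the choice of $\psi$.

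The step I expect to be the main obstacle is the suppression of the singular parts $\psi_j\in M_*^{\perp}$: everything downstream is the $K(H)$ argument verbatim, but one must check carefully that, under each of the three module products, the singular component of a $\varphi_j$ lands in $M_*^{\perp}$ and hence cancels against the $M_*$-valued $\delta$. This bookkeeping---keeping track of which of $\{.,.,.\}_1,\{.,.,.\}_2,\{.,.,.\}_3$ acts on each $\varphi_j$ and applying the matching normal/singular identity---is precisely the mechanism behind Lemma~\ref{lem:4.11 L(H)}, and is where the proof must be handled with care.
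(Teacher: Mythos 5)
Your proposal is correct and follows essentially the same route as the paper's proof: the same derivation $\delta=D_\psi\circ *$ built from a trace-class $\psi$ of nonzero trace, Corollary~\ref{cor:4.9} to see that $\delta\in\mathcal{D}_t(M,M_*)$, the normal/singular decomposition $\varphi_j=\phi_j+\psi_j$ (via the central projection $z_0$) to discard the singular parts of a putative inner representation, and the Pearcy--Topping commutator argument to force $\mathrm{tr}(\psi)=0$. The only deviations are harmless and in fact slightly tighten the exposition: you define $D_\psi$ directly on $M$ rather than as the bitranspose $D_\psi^{**}$ of the $K(H)$ derivation obtained through Proposition~\ref{p bidual extensions}, and you explicitly impose $\psi^*=-\psi$ (e.g.\ $\psi=iP$), which is precisely the hypothesis Lemma~\ref{l 3.4.2}$(ii)$ requires for $D_\psi\in\mathcal{I}nn_b^*(M,M_*)$ and which the paper leaves implicit.
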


\begin{proof} Let $B= K(H)$ denote the ideal of all compact operators on $H$
(notice that $B^{**} =L(H)=M$).
Let $\psi$ be an element in $B^*$ whose trace is not zero. From Proposition
\ref{p compact operators is not ternary weakly amen} and its proof we know that
the mapping $D_\psi \circ *: B \to B^*$, $a\mapsto \psi a^* - a^* \psi$ is a
ternary derivation (see Corollary~\ref{cor:4.9}) which doesn't belong to
${\mathcal I}nn_t (B,B^*)$.\smallskip

Applying Proposition \ref{p bidual extensions} and its proof, the bitranspose
$D_\psi^{**} : B^{**}=M \to B^*=M_*\subseteq M^*$ is an associative
derivation whose image is contained in $M_*$. Moreover, $D_\psi^{**} (x) = \psi x - x \psi,$
for every $x\in B^{**} =M$. We shall prove that $D_\psi^{**}\circ *$ is ternary
derivation from $M$ to $M^*$ (cf. Corollary~\ref{cor:4.9}) which is not inner.
Suppose, on the contrary, that there exist $\varphi_j\in M^*$
and $b_j\in M$ such that $D_\psi^{**} \circ *=\sum_{j=1}^n (L(\varphi_j,b_j)-L(b_j,\varphi_j))$ on $M$.\smallskip

For each $j$, we write $\varphi_j = \phi_j + \psi_j$, where $\phi_j\in M_*$ and $\psi_j\in M_*^{\perp}$ are the normal
and singular part of $\varphi$, respectively. Since  $D_\psi^{**} (M)\subseteq M_*$, and for each $x\in M$,
$\sum_{j=1}^n \J{\phi_j}{b_j}{x}-\J {b_j}{\phi_j}{x} \in M_*$ and $\sum_{j=1}^n \J{\psi_j}{b_j}{x}-\J {b_j}{\psi_j}{x} \in M_*^{\perp}$
we have $D_\psi^{**} \circ *=\sum_{j=1}^n (L(\phi_j,b_j)-L(b_j,\phi_j))$ on $M$, where $\phi_j\in M_*$ and $b_j\in M$.\smallskip

Following the lines in the last part of the proof of Proposition \ref{p compact operators is not ternary weakly amen}
we derive at \begin{eqnarray*}
4 \psi&=&\sum_{j=1}^n[\phi_j,b_j]+\sum_{j=1}^n[\phi_j^*,b_j^*],
\end{eqnarray*} which is impossible because $\psi$ has non-zero trace.
\end{proof}

The techniques in this subsection can be used to show that the Cartan factor $M_n(\CC)$ of all  operators on a finite dimensional Hilbert space is ternary weakly amenable.
This is of course a special case of Proposition~\ref{p finite dimensional} below, but we are able to give a direct proof here.  Note also that  $M_n(\CC)$ is Jordan weakly amenable by \cite{Jacobson51}, or \cite{Jac}.

Although Lemma~\ref{lem:4.11} states that ${\mathcal
I}nn_J(A,A^*)\not={\mathcal D}_J(A,A^*)$ when $A=K(H)$ with $H$ infinite dimensional, nevertheless for $A=M_n(\CC)$,  we shall show directly in the next lemma that the equality does hold for the subsets of
$^*$-derivations.   This is included in the next lemma (as well as following from \cite{Jacobson51} or \cite{Jac}).

\begin{lemma}\label{lem:4.13}
Let $A$ denote the JB$^*$-triple $M_n(\CC)$. Then $${\mathcal
I}nn_b^*(A,A^*)= {\mathcal I}nn_J^*(A,A^*)={\mathcal
D}_J^*(A,A^*)$$
\end{lemma}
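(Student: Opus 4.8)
The plan is to prove the two equalities by running a cycle of four inclusions, three of which are formal and one of which carries the real content, and to do so entirely from the C$^*$-algebraic machinery (so as to avoid the appeal to \cite{Jacobson51} or \cite{Jac}). First I would record the free inclusions. Lemma~\ref{l 3.4.2}$(i)$ already supplies $\mathcal{I}nn_J^*(A,A^*)\subseteq \mathcal{I}nn_b^*(A,A^*)$. Next, since every associative derivation is automatically a Jordan derivation (expand $D(a\circ b)$ via $a\circ b=\tfrac12(ab+ba)$ and regroup), an inner binary $^*$-derivation is in particular a Jordan $^*$-derivation, whence $\mathcal{I}nn_b^*(A,A^*)\subseteq \mathcal{D}_J^*(A,A^*)$. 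It then remains to close the cycle with $\mathcal{D}_J^*(A,A^*)\subseteq \mathcal{I}nn_b^*(A,A^*)$ and $\mathcal{I}nn_b^*(A,A^*)\subseteq \mathcal{I}nn_J^*(A,A^*)$.

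For the collapse $\mathcal{D}_J^*(A,A^*)\subseteq \mathcal{I}nn_b^*(A,A^*)$, let $D\in\mathcal{D}_J^*(A,A^*)$. As $A=M_n(\CC)$ is finite dimensional, $D$ is automatically continuous, so by Johnson's theorem \cite{John96} (or its treatment in \cite{HaaLaust}) $D$ is an associative derivation from $A$ into the bimodule $A^*$. Because $M_n(\CC)$ is weakly amenable---the finite-dimensional case due to Hochschild \cite{Hochs}, subsumed by Haagerup \cite{Haa83}---every such associative derivation is inner, so $D=D_\varphi$ for some $\varphi\in A^*$; being a $^*$-derivation, $D=D_\varphi\in\mathcal{I}nn_b^*(A,A^*)$. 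This is the step where the conceptual content enters (it is what replaces Jacobson), but it is essentially a citation chain.

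The inclusion $\mathcal{I}nn_b^*(A,A^*)\subseteq \mathcal{I}nn_J^*(A,A^*)$ is the one genuinely hands-on step, and I would argue it through the decomposition $M_n(\CC)=\mathfrak{sl}_n(\CC)\oplus\CC\,1$ into trace-zero matrices and scalars. Identify $A^*$ with $M_n(\CC)$ via the trace pairing $\varphi_c(y)=\mathrm{tr}(cy)$. A short computation with the module actions $(b\varphi_c)(y)=\varphi_c(yb)$ and $(\varphi_c b)(y)=\varphi_c(by)$ shows that the commutator $[b,x]=bx-xb$ ($b\in A$, $x\in A^*$) corresponds to the matrix commutator $[b,c]$, and that $D_{\varphi_c}$ is the map $a\mapsto\varphi_{[c,a]}$. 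Hence $D_{\varphi_c}=0$ exactly when $c$ is central, i.e.\ scalar, while (by the computation in the proof of Lemma~\ref{l 3.4.2}$(i)$) $\mathcal{I}nn_J(A,A^*)$ consists precisely of the $D_{\varphi_c}$ with $c$ in the span of the $[b,c]$, namely $c\in\mathfrak{sl}_n(\CC)$. Thus, given $D_\varphi\in\mathcal{I}nn_b^*(A,A^*)$ with $\varphi\leftrightarrow c$, I would write $c=c_0+\lambda 1$ with $c_0\in\mathfrak{sl}_n(\CC)$ and $\lambda=\tfrac1n\mathrm{tr}(c)$; the scalar part contributes nothing, so $D_{\varphi_c}=D_{\varphi_{c_0}}\in\mathcal{I}nn_J(A,A^*)$, and as it is the same map $D_\varphi$ it is a $^*$-derivation, giving $D_\varphi\in\mathcal{I}nn_J^*(A,A^*)$.

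Combining the four inclusions, $\mathcal{I}nn_b^*(A,A^*)\subseteq\mathcal{D}_J^*(A,A^*)\subseteq\mathcal{I}nn_b^*(A,A^*)$ forces $\mathcal{I}nn_b^*=\mathcal{D}_J^*$, and $\mathcal{I}nn_J^*(A,A^*)\subseteq\mathcal{I}nn_b^*(A,A^*)\subseteq\mathcal{I}nn_J^*(A,A^*)$ forces $\mathcal{I}nn_J^*=\mathcal{I}nn_b^*$, yielding the asserted triple equality. The only place calling for care is the third paragraph: correctly transporting both the commutator span (to $\mathfrak{sl}_n(\CC)$) and the kernel of $c\mapsto D_{\varphi_c}$ (to the scalars) across the trace identification. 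Once these two identifications are in hand, the decomposition reconciling binary-inner with Jordan-inner derivations is immediate, and that is the main obstacle I would expect.
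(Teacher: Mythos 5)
Your proof is correct, and its global skeleton matches the paper's: the paper likewise closes the cycle via Johnson's theorem and weak amenability of $M_n(\CC)$ (Hochschild/Haagerup), concluding ${\mathcal D}_J^*(A,A^*)={\mathcal D}_b^*(A,A^*)={\mathcal I}nn_b^*(A,A^*)\subseteq {\mathcal I}nn_J^*(A,A^*)\subseteq {\mathcal D}_J^*(A,A^*)$. Where you genuinely diverge is in the execution of the crux inclusion ${\mathcal I}nn_b^*(A,A^*)\subseteq {\mathcal I}nn_J^*(A,A^*)$. The paper first derives $\psi^*=-\psi$ by invoking Lemma~\ref{l 3.4.2}(ii) together with \cite[Theorem 1]{PeaTop71}; this is delicate in $M_n(\CC)$, where the span of commutators is $\mathfrak{sl}_n(\CC)$ and hence \emph{not} norm-dense, so the ``only if'' half of Lemma~\ref{l 3.4.2}(ii) does not literally apply --- one only gets that $\psi+\psi^*$ annihilates trace-zero matrices, and must first normalize $\psi$ by subtracting a multiple of the trace functional (which leaves $D_\psi$ unchanged). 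The paper then writes $\psi=[\varphi,b]+\frac{\mathrm{Tr}(\psi)}{n}I$ via the single-commutator theorem (\cite{Shoda37}, \cite{AlbMuc57}) and splits $\varphi$ and $b$ into hermitian and skew parts so as to exhibit $D$ as a Jordan-inner derivation with \emph{self-adjoint} implementing pairs. Your route sidesteps both the skew-adjointness of $\psi$ and the hermitian decomposition: you treat the $^*$-derivation condition as a property of the map rather than of its representation, peel off only the scalar part of $c$ (which acts trivially, since $D_{\varphi_c}(a)=\varphi_{[c,a]}$ under the trace pairing --- your identifications check out against the paper's conventions $a\varphi(y)=\varphi(ya)$, $\varphi a(y)=\varphi(ay)$), and need merely that $\mathfrak{sl}_n(\CC)$ is spanned by commutators, which is elementary and requires no appeal to Shoda/Albert--Muckenhoupt. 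This makes your argument shorter and more robust at exactly the point where the paper's citation is shakiest. What the paper's heavier version buys is the explicit self-adjoint representation $\psi=[\varphi_1,b_1]-[\varphi_2,b_2]+\frac{\mathrm{Tr}(\psi)}{n}I$, which is reused verbatim in the proposition immediately following the lemma to convert $D\circ *$ into a ternary inner derivation and so prove that $M_n(\CC)$ is ternary weakly amenable; your leaner lemma would need a supplementary hermitian splitting of the trace-zero part before it could feed that application.
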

\begin{proof}
Let $D\in {\mathcal I}nn_b^*(A,A^*)$ so that $D(x)=\psi x-x\psi$
for some $\psi\in A^*$. Recall (\cite[Theorem 1]{PeaTop71}) that
every compact operator is a finite sum of commutators of compact
operators.  Therefore, by Lemma~\ref{l 3.4.2}(ii), $\psi^*=-\psi$.
Also, since every matrix of trace 0 is a commutator
(\cite{Shoda37}, and \cite{AlbMuc57}), we have
$\psi=[\varphi,b]+\frac{\hbox{Tr}\, (\psi)}{n} I$.  Expanding
$\varphi=\varphi_1+i\varphi_2$ and $b=b_1+ib_2$ into hermitian and
skew symmetric parts and using $\psi^*=-\psi$ leads to
$$\psi=[\varphi_1,b_1]-[\varphi_2,b_2]+\frac{\hbox{Tr}\, (\psi)}{n}
I.$$

For $x,y \in A$, direct calculation yields
$$D(x)=\varphi_1\circ(b_1\circ x)-b_1\circ (\varphi_1\circ
x)-\varphi_2\circ(b_1\circ x)+b_2\circ(\varphi_2\circ x),$$ so
that $D\in{\mathcal I}nn_J^*(A,A^*)$.\smallskip

From the theorems of Haagerup (alternatively  \cite[Th. 2.2]{Hochs}) and Johnson,
and what was just proved, we have $${\mathcal D}_J^*(A,A^*)={\mathcal
D}_b^*(A,A^*)={\mathcal I}nn_b^*(A,A^*)\subseteq {\mathcal
I}nn_J^*(A,A^*)\subseteq {\mathcal D}_J^*(A,A^*).$$\end{proof}

\begin{proposition}
The JB$^*$-triple $A=M_n(\CC)$ is ternary weakly amenable and Jordan weakly amenable.
\end{proposition}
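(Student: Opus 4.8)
The plan is to prove ternary weak amenability of $A = M_n(\CC)$ by combining the structural decomposition of ternary derivations obtained in Proposition~\ref{p 3.4.4} with the identification of $^*$-derivations provided by Lemma~\ref{lem:4.13}. The essential point is that all the relevant spaces of derivations collapse onto inner ones in the finite-dimensional setting, so the only real work is to assemble the inclusions correctly.

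First I would recall that $A = M_n(\CC)$, being a finite-dimensional C$^*$-algebra, is Jordan weakly amenable: this follows from Jacobson's theory \cite{Jacobson51}, \cite{Jac}, or from Haagerup's theorem combined with Johnson's theorem as used in Lemma~\ref{lem:4.13}. Concretely, $\mathcal{D}_J^*(A,A^*) = \mathcal{I}nn_b^*(A,A^*)$ by the displayed chain at the end of Lemma~\ref{lem:4.13}. For the full Jordan weak amenability statement $\mathcal{D}_J(A,A^*) = \mathcal{I}nn_J(A,A^*)$, I would note that every matrix of trace zero is a commutator (\cite{Shoda37}, \cite{AlbMuc57}), which is exactly what drives the argument in Lemma~\ref{lem:4.13}; alternatively this is immediate from Hochschild's finite-dimensional result \cite{Hochs}. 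This handles the second assertion of the proposition.

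For ternary weak amenability, I would invoke Proposition~\ref{p 3.4.4}: since $A$ is unital and Jordan weakly amenable, it gives the identity
$$
\mathcal{D}_t(A,A^*) = \mathcal{I}nn_b^*(A,A^*)\circ * + \mathcal{I}nn_t(A,A^*).
$$
It therefore suffices to show that the first summand on the right is already ternary inner, i.e. that $\mathcal{I}nn_b^*(A,A^*)\circ * \subseteq \mathcal{I}nn_t(A,A^*)$. But by Lemma~\ref{lem:4.13} we have $\mathcal{I}nn_b^*(A,A^*) = \mathcal{I}nn_J^*(A,A^*)$, and the explicit formula displayed in the proof of Lemma~\ref{lem:4.13} writes each such $^*$-derivation $D$ as a finite sum of Jordan-inner pieces of the form $\delta_{x_0,b}$. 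Composing with $*$ and comparing with the defining formula $(\ref{eq:0308111})$ for ternary inner derivations, I would check directly that each $\delta_{x_0,b}\circ *$ is a ternary inner derivation $\delta(b,x_0)$ on $A$ (this is the same passage between binary/Jordan inner and ternary inner derivations already exploited in Lemma~\ref{l 3.4.2}(i) and in the proof of Proposition~\ref{p 3.4.7}). Hence $\mathcal{I}nn_b^*(A,A^*)\circ * \subseteq \mathcal{I}nn_t(A,A^*)$, and the displayed decomposition collapses to $\mathcal{D}_t(A,A^*) = \mathcal{I}nn_t(A,A^*)$.

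The main obstacle, modest as it is, lies in verifying cleanly that composition with the involution $*$ carries the Jordan-inner representatives of a $^*$-derivation into genuine ternary inner derivations of the correct form $\delta(b,x_0)$. This is a bookkeeping computation relating $a\circ b = (ab+ba)/2$ with the triple product $\J abc = \frac12(ab^*c + cb^*a)$ and is entirely analogous to the calculations carried out in the proof of Lemma~\ref{l 3.4.2}; no deep input beyond the commutator results already cited is needed. Once this is in hand, both conclusions of the proposition follow immediately.
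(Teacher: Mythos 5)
Your proposal is correct and takes essentially the same route as the paper: Jordan weak amenability of $M_n(\CC)$ via Jacobson, the decomposition $\mathcal{D}_t(A,A^*)=\mathcal{I}nn_b^*(A,A^*)\circ * + \mathcal{I}nn_t(A,A^*)$ from Proposition~\ref{p 3.4.4}, and the inclusion $\mathcal{I}nn_b^*(A,A^*)\circ *\subset \mathcal{I}nn_t(A,A^*)$ obtained from the self-adjoint commutator decomposition $\psi=[\varphi_1,b_1]-[\varphi_2,b_2]+\frac{\hbox{Tr}\,(\psi)}{n}I$ supplied by Lemma~\ref{lem:4.13}. One small sharpening of your bookkeeping step: $\delta_{x_0,b}\circ *$ is \emph{not} ternary inner for arbitrary $x_0\in A^*$, $b\in A$ (for a general pair the ternary inner derivation $\delta(b,x_0)$ involves $b^*$ and $x_0^*$ as well), so the identification works precisely because the representatives coming from Lemma~\ref{lem:4.13} are self-adjoint, with a factor of $2$ appearing, exactly as in the paper's formula $D(x^*)=\J{\varphi_1}{2b_1}{x}-\J{2b_1}{\varphi_1}{x}-\J{\varphi_2}{2b_2}{x}+\J{2b_2}{\varphi_2}{x}$.
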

\begin{proof}
We have noted above that $M_n(\CC)$ is Jordan weakly amenable.

By Proposition~\ref{p 3.4.4}
$$ \mathcal{D}_t(A,A^*)=\mathcal{I}nn_b^*(A,A^*)\circ *+\mathcal{I}nn_t(A,A^*),$$
so it suffices to prove that $\mathcal{I}nn_b^*(A,A^*)\circ *\subset
\mathcal{I}nn_t(A,A^*).$\smallskip

As in the proof of Lemma~\ref{lem:4.13}, if $D\in {\mathcal
I}nn_b^*(A,A^*)$ so that $Dx=\psi x-x\psi$ for some $\psi\in A^*$,
then $\psi=[\varphi_1,b_1]-[\varphi_2,b_2]+\frac{\hbox{Tr}\,
(\psi)}{n}I$, where $b_1,b_2$ are self adjoint elements of $A$ and
$\varphi_1$ and $\varphi_2$ are self adjoint elements of $A^*$. It
is easy to see that, for each $x\in A$, we have
$$D(x^*)=\J{\varphi_1}{2b_1}{x}-\J{2b_1}{\varphi_1}{x}-\J{\varphi_2}{2b_2}{x}
+\J{2b_2}{\varphi_2}{x},$$ so that $D\circ *\in
\mathcal{I}nn_t(A,A^*).$
\end{proof}

\section{The case of Cartan factors}

Contrary to what happens for (binary) weak amenability
in the setting of C$^*$-algebras,
not every JB$^*$-triple is ternary weakly amenable.
In this section, we shall study weak amenability
for some examples of JB$^*$-triples.\smallskip

As was mentioned in the introduction, every finite dimensional JB$^*$-triple
has the inner derivation property (cf. \cite[Chpt.11]{Meyberg72} or  \cite[Chpt. 8]{Loos77}),
and indeed is ``super amenable'', meaning that every derivation
into a Jordan Banach triple module is inner (see \cite[III.Korollar 1.6]{KR78}).
In particular, we have the following proposition.

\begin{proposition}\label{p finite dimensional} 
Every finite dimensional JB$^*$-triple
is ternary weakly \linebreak amenable.
\end{proposition}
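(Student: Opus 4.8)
The plan is to read off the statement as the special case $X = E^*$ of the super amenability of finite dimensional JB$^*$-triples recalled immediately above. Recall from Subsection 2.2 that the dual $E^*$ of every Jordan Banach triple $E$ carries a canonical Jordan triple $E$-module structure, with module products given in $(\ref{eq module product dual 1})$ and $(\ref{eq module product dual 2})$; when $E$ is finite dimensional these products are automatically (jointly) continuous, so $E^*$ is in fact a Banach Jordan triple $E$-module. By definition, ternary weak amenability of $E$ asserts precisely that every continuous triple derivation $\delta : E \to E^*$ is inner, that is, $\mathcal{D}_t(E,E^*) = \mathcal{I}nn_t(E,E^*)$.

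First I would invoke the super amenability property, namely \cite[III.Korollar 1.6]{KR78} (resting on \cite{Meyberg72} and \cite{Loos77}): for a finite dimensional JB$^*$-triple $E$, every triple derivation from $E$ into an arbitrary Jordan Banach triple $E$-module is inner. Since $E$ is finite dimensional, every (conjugate-)linear map out of $E$ is automatically continuous, so the continuity hypothesis in the definition of ternary weak amenability costs nothing, and the conjugate-linear convention adopted here for derivations into a module in the complex case is exactly the one under which super amenability is recalled. Specializing the module to $X = E^*$ then gives $\mathcal{D}_t(E,E^*) = \mathcal{I}nn_t(E,E^*)$, which is the assertion.

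Should one wish to separate the two scalar fields, the real case follows directly from the previous paragraph, while the complex case may alternatively be routed through Proposition \ref{p complexification}, which equates ternary weak amenability of a real JB$^*$-triple with that of its complexification; finiteness of dimension is preserved under this correspondence, so no new obstruction enters.

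I expect no genuine obstacle here once super amenability is granted: the entire mathematical content is the vanishing of the relevant first triple-cohomology group for finite dimensional (semisimple) Jordan triple systems, which is exactly the cited Kühn--Rosendahl result. Were I to reprove that ingredient from scratch, the hard part would be a Whitehead-type averaging argument exploiting semisimplicity to show that every derivation into a module splits off as a coboundary; but as the excerpt permits citing \cite{KR78}, the present proposition reduces to the one-line specialization described above.
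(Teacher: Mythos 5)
Your proposal is correct and matches the paper exactly: the paper derives this proposition as an immediate specialization (``In particular'') of the super amenability of finite dimensional JB$^*$-triples cited from \cite[III.Korollar 1.6]{KR78}, with the module taken to be $E^*$, which is precisely your argument. The additional remarks on automatic continuity and the real/complex cases are harmless elaborations of the same one-line reduction.
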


\subsection{Hilbert spaces and finite rank type I Cartan factors}

Let $X$ be a real Hilbert space considered as a real Cartan factor of type I, with respect to the product \begin{equation}\label{eq:0517121}
\J xyz :=\frac12( (x|y) z + (z|y) x) \ \ (x,y,z\in X),\end{equation} where $(.|.)$ denotes the inner product of $X$. Henceforth, $J= J_{X}: X \to X^*$ will denote the Riesz mapping. We begin with a useful observation.

\begin{proposition}\label{p derivations from real Hibert space to its dual} Let $\delta : X \to X^*$ be linear mapping. Then denoting
$T = J^{-1} \delta: X \to X$, the following are equivalent:\begin{enumerate}[$(a)$]
\item $\delta$ is a ternary derivation;
\item $T$ is a bounded linear operator with $T^{*} = -T$.
\end{enumerate}
\end{proposition}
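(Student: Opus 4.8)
The plan is to transfer the ternary-derivation identity to the operator $T=J^{-1}\delta\colon X\to X$ and show that it is equivalent to the antisymmetry of the bilinear form $(a,b)\mapsto (Ta|b)$. Recall that in the real case the Riesz map is the \emph{linear} isometry $J(y)(z)=(z|y)$, so that $\delta(a)(x)=(x|Ta)$ for all $a,x\in X$. Since the inner product is non-degenerate, $\delta$ is a ternary derivation precisely when the scalar identity $\delta\J abc=\J{\delta a}bc+\J a{\delta b}c+\J ab{\delta c}$, evaluated at an arbitrary $x\in X$, holds for all $a,b,c\in X$.

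First I would expand both sides using the explicit product $(\ref{eq:0517121})$ together with the module products on $X^*$, namely $\J{a}{b}{\varphi}(x)=\J{\varphi}{b}{a}(x)=\varphi\J{b}{a}{x}$ and $\J{a}{\varphi}{b}(x)=\overline{\varphi\J{a}{x}{b}}$, in which the conjugation is trivial because $X$ is real. Tracking which slot carries the functional, one obtains $$\J{\delta a}{b}{c}(x)=(\J{b}{c}{x}|Ta),\quad \J{a}{\delta b}{c}(x)=(\J{a}{x}{c}|Tb),\quad \J{a}{b}{\delta c}(x)=(\J{b}{a}{x}|Tc),$$ whereas the left-hand side equals $\frac12\big((a|b)(x|Tc)+(c|b)(x|Ta)\big)$. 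Substituting $\J{b}{c}{x}=\frac12((b|c)x+(x|c)b)$ and its analogues and invoking the symmetry of the inner product, the terms $(a|b)(x|Tc)$ and $(c|b)(x|Ta)$ cancel on the two sides, and what survives is the single condition $$(x|c)\big[(b|Ta)+(a|Tb)\big]+(x|a)\big[(c|Tb)+(b|Tc)\big]=0$$ for all $a,b,c,x\in X$. This first step is pure bookkeeping, but it is where the apparently complicated identity collapses.

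Next I would read off the algebraic content. As the last display holds for every $x$ and $(\cdot|\cdot)$ is non-degenerate, the vector $\big[(b|Ta)+(a|Tb)\big]c+\big[(c|Tb)+(b|Tc)\big]a$ vanishes for all $a,b,c$; taking $c=a$ yields $2\big[(b|Ta)+(a|Tb)\big]a=0$, hence $(b|Ta)+(a|Tb)=0$ for all $a,b$, i.e. the form $(a,b)\mapsto(Ta|b)$ is antisymmetric. Conversely, if $T$ is bounded with $T^{*}=-T$, then $(b|Ta)+(a|Tb)=(Ta|b)+(Tb|a)=-(a|Tb)+(a|Tb)=0$, so both bracketed coefficients vanish, the condition holds, and $\delta=JT$ is a ternary derivation. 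Thus $(a)$ is equivalent to the antisymmetry of this bilinear form.

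Finally I would settle boundedness, the one genuinely non-formal point. An everywhere-defined linear operator on a Hilbert space whose associated form is skew-symmetric is automatically bounded by the Hellinger--Toeplitz theorem, and for a bounded operator the relation $(Ta|b)=-(a|Tb)$ is exactly $T^{*}=-T$; alternatively, since $X$ is a real JB$^*$-triple and $J$ is isometric, one may invoke the automatic continuity of triple derivations into the dual \cite[Cor.~15]{PeRu} to conclude directly that $T=J^{-1}\delta$ is bounded. Either route turns the antisymmetry condition into statement $(b)$, completing the equivalence. The main things to watch are the slot-bookkeeping in the dual products (and the vanishing of the conjugations over $\RR$); the boundedness subtlety is then dispatched in a single line.
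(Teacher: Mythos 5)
Your proof is correct and takes essentially the same route as the paper's: expand the derivation identity via the explicit product and the dual-module actions, cancel the symmetric terms, and specialize the outer variables (your $c=a$ is exactly the paper's $x=z$) to extract $(b|Ta)+(a|Tb)=0$, with the converse a direct verification. The only (harmless) difference is bookkeeping of continuity: the paper secures it upfront from \cite[Cor.~15]{PeRu}, while you derive the antisymmetry purely algebraically first and then note that either that same citation or a Hellinger--Toeplitz/closed-graph argument yields boundedness.
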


\begin{proof}
$(a)\Rightarrow (b)$ By \cite[Corollary 15]{PeRu}, we may assume that $\delta$ (and hence $T$) is continuous.
Let us suppose that $\delta$ is a ternary derivation. For each $x,y,$ and $z$ in $X$ we have
\begin{equation}
\label{eq 1 real Hilbert} \delta \J xyz = \J {\delta (x)}{y}{z} + \J {x}{\delta (y)}{z} + \J xy{\delta (z)}.
\end{equation}
Applying the definition (\ref{eq:0517121}) to (\ref{eq 1 real Hilbert}) results in
$$0= \frac12 \delta(x) (y) J(z)+ \frac12 \delta(y) (z) J (x) + \frac12 \delta(y) (x) J (z)
+ \frac12 \delta(z) (y) J(x),$$ for every $x,y,z\in X.$ Taking $x=z$ we see that $$0= (\delta(x) (y) + \delta(y) (x)) J (x),$$
for every $x,y\in X,$ which gives $\delta(x) (y) + \delta(y) (x)=0,$ for any $x,y\in X$, or equivalently,
$(y|T(x)) = -(x|T(y))=-(T(y)|x) $,  for any $x,y\in X$, which proves $(b)$.\smallskip

$(b)\Rightarrow (a)$
By a direct calculation using (\ref{eq:0517121}) and the definition of $T$, we have
 $\delta \J xyz = \J {\delta (x)}{y}{z} + \J {x}{\delta (y)}{z} + \J xy{\delta (z)}.$
\end{proof}

In the terminology employed above, let $x,y$ be two elements in $X$.
It is not hard to see that the inner derivation $\delta(J (x),y)=L(J(x),y)-L(y,J(x)) : X\to X^*$
is the mapping given by
$\delta(J (x),y) (a) = \frac12 (a|y) J(x) -\frac12 (a|x) J(y)$.
Therefore every inner derivation from $X$ to $X^*$ is a finite rank operator. This argument shows the following:

\begin{corollary}\label{c inner derivations real Hilbert} Let $\delta : X \to X^*$ be a linear mapping. Then denoting
$T = J^{-1} \delta: X \to X$, the following are equivalent:\begin{enumerate}[$(a)$]
\item $\delta$ is a ternary inner derivation;
\item $T$ is a finite rank operator with $T^{*} = -T$.
\end{enumerate}
\end{corollary}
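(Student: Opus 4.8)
The plan is to leverage Proposition~\ref{p derivations from real Hibert space to its dual}, which already characterises ternary derivations as exactly those $\delta$ for which $T=J^{-1}\delta$ is bounded and skew ($T^*=-T$); the only new content is to show that, among ternary derivations, the inner ones correspond precisely to the $T$ of \emph{finite} rank. The computation displayed just before the statement is the bridge: applying $J^{-1}$ to $\delta(J(x),y)(a)=\frac12 (a|y)J(x)-\frac12 (a|x) J(y)$ gives
\begin{equation*}
J^{-1}\delta(J(x),y)(a)=\tfrac12 (a|y)\, x-\tfrac12 (a|x)\, y ,
\end{equation*}
a rank-$\le 2$ operator which one checks directly is skew. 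Thus, under $J^{-1}$, the generators of $\mathcal{I}nn_t(X,X^*)$ correspond exactly to the elementary rank-$\le 2$ skew operators $a\mapsto (a|y)x-(a|x)y$.

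For $(a)\Rightarrow(b)$ I would simply note that an inner derivation is in particular a ternary derivation, so $T$ is bounded and skew by Proposition~\ref{p derivations from real Hibert space to its dual}; and since $\delta$ is a finite sum of the generators above, $T$ is a finite sum of operators of rank $\le 2$, hence of finite rank.

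For the converse $(b)\Rightarrow(a)$ the task is to decompose an arbitrary finite-rank skew operator $T$ into finitely many of these elementary pieces. Here I would use that $T^*=-T$ forces $\ker T=(\operatorname{ran}T)^\perp$, so that $X=\ker T\oplus V$ with $V=\operatorname{ran}T$ finite dimensional and $T|_V$ an invertible skew operator on $V$. The real spectral theorem for skew-symmetric (equivalently, compact normal) operators then puts $T|_V$ in canonical block form: $\dim V=2k$ is even and there is an orthonormal system $e_1,f_1,\dots,e_k,f_k$ spanning $V$ with $Te_i=\lambda_i f_i$ and $Tf_i=-\lambda_i e_i$ ($\lambda_i\neq 0$), so that
\begin{equation*}
T a=\sum_{i=1}^{k}\lambda_i\big[(a|e_i)f_i-(a|f_i)e_i\big]\qquad(a\in X).
\end{equation*}
Matching each summand against the displayed formula (take $x=2\lambda_i f_i$ and $y=e_i$) exhibits $T$ as the finite sum $\sum_i J^{-1}\delta(J(2\lambda_i f_i),e_i)$, whence $\delta=JT=\sum_i \delta(J(2\lambda_i f_i),e_i)\in\mathcal{I}nn_t(X,X^*)$.

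The main obstacle, and really the only non-formal point, is invoking the canonical form of a finite-rank real skew-symmetric operator and checking that each $2\times 2$ block matches an inner-derivation generator on the nose; everything else is a direct consequence of Proposition~\ref{p derivations from real Hibert space to its dual} together with the displayed computation. If one prefers to avoid the spectral theorem, the same decomposition can be obtained by induction on $\operatorname{rank}T$, peeling off one rank-$2$ skew block at a time.
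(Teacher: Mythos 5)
Your proposal is correct and takes essentially the same route as the paper: the corollary is stated there as an immediate consequence of the displayed generator formula $\delta(J(x),y)(a)=\frac12 (a|y)\,J(x)-\frac12 (a|x)\,J(y)$ together with Proposition \ref{p derivations from real Hibert space to its dual}, exactly the two ingredients you use. Your only addition is to spell out the $(b)\Rightarrow(a)$ direction --- the kernel--range splitting $X=\ker T\oplus \operatorname{ran} T$ and the canonical skew block form $Te_i=\lambda_i f_i$, $Tf_i=-\lambda_i e_i$, with the match $x=2\lambda_i f_i$, $y=e_i$ checking out exactly --- a decomposition the paper leaves implicit in the remark ``this argument shows the following.''
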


We can prove now that every infinite dimensional real Hilbert space is not ternary weakly amenable.

\begin{proposition}\label{prop:3.5 real}
A real or complex  Hilbert space $X$ regarded as a type I Cartan factor
is ternary weakly amenable if, and only if,
it is finite dimensional.
\end{proposition}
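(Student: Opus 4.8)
The plan is to read ternary weak amenability off the two immediately preceding characterizations and then to reduce the complex case to the real one by complexification. The finite-dimensional implication I would dispose of at once: a finite-dimensional JB$^*$-triple is ternary weakly amenable by Proposition~\ref{p finite dimensional}. (One could equally argue that in finite dimensions every bounded operator has finite rank, so by Proposition~\ref{p derivations from real Hibert space to its dual} and Corollary~\ref{c inner derivations real Hilbert} every ternary derivation $\delta:X\to X^*$ is automatically inner.) So the content is the converse.

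For the real case I would first restate the characterizations in operator-theoretic terms. Writing $T=J^{-1}\delta:X\to X$, Proposition~\ref{p derivations from real Hibert space to its dual} identifies the ternary derivations $\delta:X\to X^*$ with the bounded skew-adjoint operators $T$ (those with $T^*=-T$), while Corollary~\ref{c inner derivations real Hilbert} identifies the inner ones with the finite-rank skew-adjoint operators. Hence ternary weak amenability of $X$ is exactly the assertion that \emph{every} bounded skew-adjoint operator on $X$ has finite rank. When $X$ is infinite-dimensional this fails, and I would exhibit a witness directly. Fix an orthonormal sequence $(e_n)_{n\ge 1}$ in $X$, and define $T$ to vanish on the orthogonal complement of $\overline{\mathrm{span}}\{e_n\}$ and to act on each block $\mathrm{span}\{e_{2k-1},e_{2k}\}$ by the rotation $Te_{2k-1}=e_{2k}$, $Te_{2k}=-e_{2k-1}$. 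Then $T$ is bounded with $\|T\|=1$, each block matrix $\left(\begin{smallmatrix}0&-1\\1&0\end{smallmatrix}\right)$ is skew-symmetric so $T^*=-T$, and $T$ has infinite rank. Consequently $\delta:=JT$ is a ternary derivation that is not inner, and $X$ is not ternary weakly amenable.

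For the complex case I would invoke Proposition~\ref{p complexification}. Any complex Hilbert space $H$, regarded as a type~$I$ Cartan factor, is the complexification $\widehat{X}=X\oplus iX$ of a real Hilbert space $X$ of the same dimension (take $X$ to be the real span of an orthonormal basis of $H$). By Proposition~\ref{p complexification}, $H$ is ternary weakly amenable if and only if $X$ is, and by the real case already settled this happens if and only if $X$, equivalently $H$, is finite-dimensional. This yields both implications in the complex case.

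The main point requiring care is the identification in this last step: one must verify that the \emph{unique} complex JB$^*$-triple structure carried by $\widehat{X}$ (Proposition~2.2 of \cite{IsKaRo95}) is precisely the type~$I$ Cartan factor product on $H$, i.e.\ that it is built from the sesquilinear inner product rather than from the complex-bilinear extension of $(\cdot|\cdot)$. This is forced: the complex-bilinear form would violate the norm axiom $\|\{a,a,a\}\|=\|a\|^3$, so uniqueness pins down the correct structure and the identification is automatic. Should one prefer to avoid this identification altogether, I would instead rerun the computation of Proposition~\ref{p derivations from real Hibert space to its dual} with the conjugate-linear Riesz map to obtain the same skew-adjoint characterization directly in the complex setting, and then simply take $T=i\,\mathrm{Id}_H$, which is bounded, skew-adjoint, and of infinite rank when $\dim H=\infty$.
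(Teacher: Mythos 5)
Your main argument is correct and is essentially the paper's own proof: the paper settles the finite-dimensional direction by Propositions~\ref{p finite dimensional} and \ref{p complexification}, and in the infinite-dimensional case applies Proposition~\ref{p derivations from real Hibert space to its dual} and Corollary~\ref{c inner derivations real Hilbert} to a bounded operator $T$ with $T^*=-T$ and infinite-dimensional range, handling the complex case through the complexification. The only difference is cosmetic: the paper merely asserts that such a $T$ exists, whereas you construct the block-rotation example explicitly, and you spell out the identification of the complexification of the real form with the complex Cartan factor, which the paper leaves implicit.

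One caveat about your optional fallback for the complex case: it is flawed as stated. Rerunning the computation with the conjugate-linear Riesz map does show that ternary derivations $\delta : H \to H^*$ correspond to bounded complex-linear operators $T$ with $T^*=-T$, but the \emph{inner} ones do \emph{not} correspond to the finite-rank such operators, so ``the same skew-adjoint characterization'' of Corollary~\ref{c inner derivations real Hilbert} does not transfer verbatim. The conjugation in the middle-variable module action $\J a{\varphi}b (x) = \overline{\varphi \J axb}$ produces an extra scalar term: a direct computation shows that $\delta(b,J(x))$ corresponds to the operator $a \mapsto \frac12\left( (a|x) b - (a|b) x \right) + i\, \mathrm{Im} (b|x)\, a$; in particular, for $\|b\|=1$ the inner derivation $\delta(b,J(ib))$ corresponds to $-i\left(P_b + \mathrm{Id}_H\right)$, where $P_b = (\cdot|b)b$, a skew-adjoint operator of \emph{infinite} rank. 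Hence inner derivations correspond to operators $F + i\lambda\, \mathrm{Id}_H$ with $F$ finite-rank skew-adjoint and $\lambda$ real, subject to the relation $\mathrm{tr}(F) = i\lambda$ (each generator's scalar coefficient $i\,\mathrm{Im}(b|x)$ equals the trace of its finite-rank part, and both are additive). With this corrected characterization your witness $T = i\,\mathrm{Id}_H$ is in fact still non-inner when $\dim H = \infty$ --- uniqueness of the decomposition forces $F=0$ and $\lambda =1$, violating $\mathrm{tr}(F)=i\lambda$ --- but your one-line justification does not establish this, so the complexification route of your main argument is the one to rely on.
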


\begin{proof} The  if implication follows from Propositions~\ref{p finite dimensional} and \ref{p complexification}. To see the other implication, suppose $X$ is infinite dimensional.
Then we can find a bounded linear operator $T: X\to X$ having infinite dimensional range and satisfying $T^* = -T$.
Proposition \ref{p derivations from real Hibert space to its dual} and Corollary \ref{c inner derivations real Hilbert}
imply that $\delta = J_{X} T$ is a ternary derivation which is not inner.
\end{proof}

The above results also give new ideas to deal with the ternary weak amenability in other Cartan factors of type I.\smallskip

Suppose $H_1$ and $H_2$ are Hilbert spaces. The symbol $K(H_1,H_2)$ will denote the set
of all compact operators from $H_1$ to $H_2$.
It is known that every $a$ in $K(H_1,H_2)$
can be written  (uniquely) as a (possibly finite) sum of the form $$a= \sum_{n=1}^{\infty} \sigma_n(a) k_n\otimes h_n,$$
where $(\sigma_n(a))\subset \mathbb{R}_0^{+}$ is the sequence of \emph{singular values} of $a$, $(h_n)$ and $(k_n)$
are orthonormal systems in $H_1$ and $H_2$, respectively, and given $\xi\in H_2,$ $\eta,h\in H_1$,
we denote $\xi \otimes \eta (h)= (h|\eta) \xi$ (cf. \cite[\S 1.2]{Sim}).
We denote by $S^1(H_1,H_2)$ the set of all compact operators
$\phi$ from $H_1$ to $H_2$, whose sequence of singular values
$(\sigma_i(\phi))_{i \in \NN}\in \mathbb{R}_0^{+}$ lies in $\ell_1$. For each $\xi\in H_2,$ and $\eta\in H_1$ we can define an element $\omega_{\xi,\eta}\in K(H_1,H_2)^*$, given by $\omega_{\xi,\eta} (x) = (x(\eta)|\xi)$ ($\forall x\in K(H_1,H_2)$). When we equip $S^1(H_1,H_2)$ with the norm
$\|\phi\|_1 =  \sum_i \sigma_i(\phi) $, $(S^1(H_1,H_2),\|.\|_1)$ is a Banach space and $S^1(H_1,H_2)$
can be identified with $K(H_1,H_2)^*$, via the assignment $$\xi \otimes \eta \mapsto \omega_{\xi,\eta}$$ (cf. \cite{Sim}).  We omit the straightforward proof of the following lemma.


\begin{lemma}\label{l tech polar} Let $X$ and $Y$ be two real Hilbert spaces. Suppose that $Y_1$ and $Y_2$
are two closed subspaces of $Y$ such that $Y= Y_1 \oplus^{\perp} Y_2$. Then the polar, $K(X,Y_1)^{\circ}$,
of $K(X,Y_1)$ in $K(X,Y)^*= S^{1} (X,Y)$ coincides with $S^1(X,Y_2)$.$\hfill\Box$
\end{lemma}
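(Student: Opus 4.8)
The plan is to use that $K(X,Y_1)$ is a \emph{linear subspace} of $K(X,Y)$, so that its polar coincides with its annihilator in the dual $K(X,Y)^{*}=S^1(X,Y)$; explicitly, $K(X,Y_1)^{\circ}=\{\phi\in S^1(X,Y):\langle x,\phi\rangle=0 \text{ for all } x\in K(X,Y_1)\}$, where the duality pairing is the one induced by the identification $\xi\otimes\eta\mapsto\omega_{\xi,\eta}$, namely $\langle x,\xi\otimes\eta\rangle=(x(\eta)|\xi)$ for $x\in K(X,Y)$, $\xi\in Y$, $\eta\in X$. Writing $P_1,P_2$ for the orthogonal projections of $Y$ onto $Y_1,Y_2$, the whole argument reduces to the single orthogonality relation $Y_1=Y_2^{\perp}$, i.e. $(\xi|\zeta)=0$ whenever $\xi\in Y_1$ and $\zeta\in Y_2$. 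Both inclusions are then essentially immediate, which is why the authors declare the proof straightforward.

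For the inclusion $S^1(X,Y_2)\subseteq K(X,Y_1)^{\circ}$ I would take $\phi\in S^1(X,Y_2)$ and use its singular-value decomposition $\phi=\sum_{n}\sigma_n(\phi)\,k_n\otimes h_n$; since the range of $\phi$ lies in $Y_2$, the orthonormal system $(k_n)$ necessarily lies in $Y_2$. For any $x\in K(X,Y_1)$ the pairing then expands as $\langle x,\phi\rangle=\sum_n\sigma_n(\phi)\,(x(h_n)|k_n)$, and every summand vanishes because $x(h_n)\in Y_1$ while $k_n\in Y_2$; hence $\phi$ annihilates $K(X,Y_1)$.

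For the reverse inclusion $K(X,Y_1)^{\circ}\subseteq S^1(X,Y_2)$ I would take $\phi$ in the polar and test it only against the rank-one operators $\xi\otimes\eta$ with $\xi\in Y_1$, $\eta\in X$, all of which belong to $K(X,Y_1)$. Since $\langle \xi\otimes\eta,\phi\rangle=(\phi(\eta)|\xi)$, the annihilation condition forces $(\phi(\eta)|\xi)=0$ for every $\eta\in X$ and $\xi\in Y_1$, that is, $\phi(\eta)\in Y_1^{\perp}=Y_2$ for all $\eta$. Thus the range of $\phi$ is contained in $Y_2$, and $\phi\in S^1(X,Y_2)$, the trace-class membership and the singular values being unchanged upon regarding $\phi$ as a map into $Y_2$.

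There is no genuine obstacle here, only bookkeeping: one must fix the duality convention for $K(X,Y)^{*}\cong S^1(X,Y)$ consistently with $\omega_{\xi,\eta}(x)=(x(\eta)|\xi)$, and record at the outset that the polar of a subspace equals its annihilator. The mildest technical point is the interchange of the (absolutely convergent) singular-value sum with the functional in the first inclusion, which is justified by the $\ell_1$-summability of $(\sigma_n(\phi))$ together with the continuity of $\phi\mapsto\langle x,\phi\rangle$ on $S^1(X,Y)$.
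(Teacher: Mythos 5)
Your proof is correct; note that the paper itself omits the proof of this lemma, stating only that it is straightforward, and your argument --- polar of a subspace equals annihilator, the singular-value decomposition $\phi=\sum_n\sigma_n(\phi)\,k_n\otimes h_n$ with $(k_n)\subseteq Y_2$ for the inclusion $S^1(X,Y_2)\subseteq K(X,Y_1)^{\circ}$, and testing against rank-one operators $\xi\otimes\eta$ with $\xi\in Y_1$ to force $\phi(\eta)\in Y_1^{\perp}=Y_2$ for the converse --- is exactly the routine verification the authors had in mind. The two points genuinely deserving care, namely fixing the duality convention $\omega_{\xi,\eta}(x)=(x(\eta)|\xi)$ consistently (including the trace-duality identity $\langle\xi\otimes\eta,\phi\rangle=(\phi(\eta)|\xi)$, which holds since the Hilbert spaces are real) and the norm-convergent interchange of the singular-value sum with the pairing, are both correctly handled.
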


The following easily verified formulas will facilitate the proof of the next theorem. For real Hilbert spaces $X$ and $Y$, and vectors $\xi,a,c\in Y$ and $\eta,b,d\in X$,

$$2\J{\omega_{\xi,\eta}}{a\otimes b}{c\otimes d}=\ip{b}{d}\ip{\xi}{a}\omega_{c,\eta}+\ip{a}{c}\ip{\eta}{b}\omega_{\xi,d}$$
and
$$2\J{a\otimes b}{\omega_{\xi,\eta}}{c\otimes d}=\ip{\eta}{d}\ip{a}{\xi}\omega_{c,b}+\ip{\eta}{b}\ip{\xi}{c}\omega_{a,d}.$$

\begin{theorem}
\label{th TWA in L(H,K)} Let $X$ and $Y$ be real Hilbert spaces with $\dim (Y)<\infty = \dim (X)$.
Then the real Cartan factor $L(X,Y)$ is not ternary weakly amenable.
\end{theorem}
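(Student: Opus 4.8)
The plan is to exhibit a continuous ternary derivation $\delta : L(X,Y) \to L(X,Y)^*$ that fails to be inner, in direct analogy with the pure Hilbert space case of Proposition~\ref{prop:3.5 real}. The starting observation is that, since $\dim(Y)<\infty$, every bounded operator $X\to Y$ has rank at most $\dim(Y)$; hence $L(X,Y)=K(X,Y)=S^1(X,Y)$ all coincide as the vector space of finite-rank operators $X\to Y$, with mutually equivalent norms, and $L(X,Y)^*=S^1(X,Y)$ via the trace pairing $\langle \omega,x\rangle=\mathrm{tr}(x\omega^*)$ (which is consistent with $\omega_{\xi,\eta}$ under $\xi\otimes\eta\mapsto\omega_{\xi,\eta}$). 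Because $X$ is infinite dimensional we may fix, exactly as in the proof of Proposition~\ref{prop:3.5 real}, a bounded skew-symmetric operator $T:X\to X$ (that is, $T^*=-T$) of infinite rank, and define $\delta : L(X,Y)\to L(X,Y)^*$ by letting $\delta(x)$ be the functional corresponding to the operator $xT\in S^1(X,Y)$, i.e. $\delta(x)(z)=\mathrm{tr}(z(xT)^*)=-\mathrm{tr}(zTx^*)$.

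First I would verify that $\delta$ is a ternary derivation. Working through the dual-module products, one finds that for $\varphi\leftrightarrow\omega$ the product $\J{a}{b}{\varphi}$ corresponds to the operator $\J{a}{b}{\omega}$ (the ordinary triple product of $L(X,Y)$), while the middle product $\J{a}{\varphi}{c}$ corresponds to $\tfrac12(c\omega^*a+a\omega^*c)$; these two identities are precisely the content of the two displayed formulas preceding the theorem, written for rank-one arguments. Substituting $\omega=aT,bT,cT$ into the three summands of $\J{\delta a}{b}{c}+\J{a}{\delta b}{c}+\J{a}{b}{\delta c}$ and adding, the cross terms cancel and one is left with the operator $\tfrac12(ab^*c+cb^*a)T=\J{a}{b}{c}\,T$, which is exactly the operator representing $\delta\J{a}{b}{c}$; the skew-symmetry $T^*=-T$ is what forces the cancellation. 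Continuity of $\delta$ is then automatic by \cite[Cor. 15]{PeRu}.

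Next I would pin down the operator form of an arbitrary inner derivation. For $b\in L(X,Y)$ and $\varphi\leftrightarrow\omega\in S^1(X,Y)$, a direct calculation with the same two formulas shows that $\delta(b,\varphi)=L(b,\varphi)-L(\varphi,b)$ is represented by the operator $a\mapsto \tfrac12(aS+Ra)$, where $S:=\omega^*b-b^*\omega$ is skew-symmetric on $X$ and $R:=b\omega^*-\omega b^*$ is skew-symmetric on $Y$. The decisive point is that $S$ is necessarily of finite rank: both $b$ and $\omega$ have rank at most $\dim(Y)$, so $S$ has rank at most $2\dim(Y)$. Summing finitely many such derivations, every element of $\mathcal{I}nn_t(L(X,Y),L(X,Y)^*)$ has the form $a\mapsto \tfrac12(aS+Ra)$ with $S$ a \emph{finite-rank} skew-symmetric operator on $X$ and $R$ skew-symmetric on $Y$.

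Finally, suppose for contradiction that $\delta$ were inner, so $xT=\tfrac12(xS+Rx)$ for all $x$, with $S$ finite rank and skew. Evaluating on rank-one operators $x=e\otimes\eta$, where $e$ is a fixed unit vector of $Y$, gives $e\otimes(T^*\eta)=\tfrac12\,e\otimes(S^*\eta)+\tfrac12\,(Re)\otimes\eta$; taking the $Y$-inner product against $e$ and using $\ip{Re}{e}=0$ (since $R$ is skew) yields $T^*\eta=\tfrac12 S^*\eta$ for every $\eta\in X$, that is, $S=2T$. This is impossible, since $T$ has infinite rank while $S$ has finite rank. Hence $\delta$ is a continuous ternary derivation into the dual that is not inner, and $L(X,Y)$ is not ternary weakly amenable. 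I expect the main obstacle to be the bookkeeping in the middle two steps: the dual-module triple products differ from the triple product of $L(X,Y)$, and in particular $\J{a}{\varphi}{c}$ carries the adjoints of its arguments, so care is needed to express each term as a genuine trace-class operator, to track the cancellation that hinges on $T^*=-T$, and to confirm that the $X$-component $S$ of every inner derivation is forced to be finite rank.
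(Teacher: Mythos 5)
Your proposal is correct, and its first half is in substance identical to the paper's: the paper also fixes a bounded skew-symmetric $T$ on $X$ with infinite-dimensional range and defines the derivation on rank-one elements by $k\otimes h\mapsto \omega_{k,T(h)}$, which under the trace pairing is exactly your $x\mapsto xT$ up to replacing $T$ by $-T$ (since $(k\otimes h)T=k\otimes T^{*}h$); the verification that skewness of $T$ forces the cancellation is the same computation. Where you genuinely diverge is in the non-innerness argument. The paper expands each $\phi_j$ in its singular-value decomposition, evaluates the would-be inner derivation at $k_0\otimes h$ for a fixed unit vector $k_0$, splits $Y=\RR k_0\oplus^{\perp}Y_2$, and invokes the polar Lemma \ref{l tech polar} to project onto $S^{1}(X,\RR k_0)$; this only yields $T=\lambda\, Id_X+F$ with $F$ finite rank, and a final appeal to $T^{*}=-T$ is needed to kill the scalar $\lambda$. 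You instead prove a clean structural normal form: every inner derivation is $a\mapsto\tfrac12(aS+Ra)$ with $S=\sum_j(\omega_j^{*}b_j-b_j^{*}\omega_j)$ skew of rank at most $2n\dim(Y)$ on $X$ and $R$ skew on $Y$, and then testing on $e\otimes\eta$ and using $\ip{Re}{e}=0$ gives $S=2T$ outright. This buys several things: it sidesteps the polar lemma and the singular-value bookkeeping entirely; the scalar ambiguity $\lambda\, Id_X$ never arises, because the skewness of $R$ (used at $\ip{Re}{e}=0$) plays precisely the role that the skewness of $T$ plays at the end of the paper's proof; and as a by-product you obtain an exact operator-theoretic description of $\mathcal{I}nn_{t}(L(X,Y),L(X,Y)^{*})$, not merely a contradiction. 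Your identifications check out against the paper: the two displayed rank-one formulas preceding the theorem are exactly your statements that $\J{\varphi}{b}{c}$ corresponds to the ordinary triple product $\J{\omega}{b}{c}$ and $\J{a}{\varphi}{c}$ to $\tfrac12(c\omega^{*}a+a\omega^{*}c)$, and the continuity appeal to \cite[Cor. 15]{PeRu} (or the trivial norm equivalence $\|x\|\leq\|x\|_1\leq\dim(Y)\|x\|$) is sound.
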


\begin{proof} Since $\dim (Y)<\infty$, $L(X,Y)= K(X,Y) = S^{1} (X,Y)$ as linear spaces and $L(X,Y) = K(X,Y)$ as Banach spaces.
We can also pick $T\in L(X)$ with infinite dimensional range and $T^* = -T$.
Since the elements $k\otimes h$ (resp., $\omega_{k,h}$) with $h\in X$, $k\in Y$ generate the whole $L(X,Y)$ (resp., $S^{1} (X,Y)$),
the assignment $k\otimes h\mapsto \delta(k\otimes h):= \omega_{k,T(h)}$ defines a linear operator $\delta: L(X,Y) \to L^{1} (X,Y)= K(X,Y)^*$.
We claim that $\delta$ is a ternary derivation. Indeed, it is enough to prove that $$\delta \J {k_1\otimes h_1}{k_2\otimes h_2}{k_3\otimes h_3} =
 \J {\delta(k_1\otimes h_1)}{k_2\otimes h_2}{k_3\otimes h_3} $$ $$+  \J {k_1\otimes h_1}{\delta(k_2\otimes h_2)}{k_3\otimes h_3} +
  \J {k_1\otimes h_1}{k_2\otimes h_2}{\delta(k_3\otimes h_3)},$$ for every $k_1,k_2,k_3\in Y$, $h_1,h_2,h_3\in X$, which follows by direct calculation.\smallskip

We shall finally prove that $\delta$ is not inner. Suppose on the contrary that $\delta = \sum_{j=1}^{p} \delta(\phi_j,a_j)$ for suitable $\phi_1,\ldots,\phi_p\in S^{1} (X,Y)$, $a_1,\ldots,a_p\in K(X,Y)$. Let us fix a norm-one element $k_0\in Y$ and an arbitrary $h\in X$, so that we have $\delta (k_0\otimes h) = \omega_{k_0,T(h)}$. On the other hand, each $\phi_j$ can be written in the form $$\phi_j =  \sum_{n=1}^{m_j} \alpha_{n}^{j} \omega_{k_n^{j},h_n^{j}},$$
where $m_j\leq \dim (Y)$, $\alpha_{n}^{j}>0$ and $(k_n^{j})_{n},$ and $(h_n^{j})_{n}$ are orthonormal systems in $Y$ and $X$, respectively. Now, we can check that

 $$
\omega_{k_0,T(h)}= \delta (k_0\otimes h) = \sum_{j=1}^{p} \delta(\phi_j,a_j) (k_0\otimes h) = \sum_{j=1}^{p} \delta\left(\sum_{n=1}^{m_j} \alpha_{n}^{j} \omega_{k_n^{j},h_n^{j}},a_j\right) (k_0\otimes h)
$$

$$=\sum_{j=1}^{p}  \sum_{n=1}^{m_j} \alpha_{n}^{j} \left(
\frac12 (a_j(h_n^j)|k_0)\ \omega_{k_n^j,h}
+ \frac12 (a_j(h)|k_n^j) \ \omega_{k_0,h_n^j}\right)
$$

$$+\sum_{j=1}^{p}  \sum_{n=1}^{m_j} \alpha_{n}^{j} \left(
-\frac12 (h_n^j|h) \ \omega_{k_0,a_j^*(k_n^j)}
-\frac12 (k_0|k_n^j)\ \omega_{a_j(h_n^j),h} \right).
$$

Write $Y = \mathbb{R} k_0 \oplus^{\perp} Y_2$, where $Y_2 =\{ k_0\}^{\perp}$. Since for every $\xi\in X,$ $\omega_{k_0,\xi}$ lies in $K(X,Y_2)^{\circ} = S^{1} (X, \mathbb{R} k_0)$ (compare Lemma \ref{l tech polar}), it follows from the above identities that the functional

$$\psi:=\sum_{j=1}^{p}  \sum_{n=1}^{m_j} \left(\frac12 (a_j(h_n^j)|k_0)\ \omega_{k_n^j,h} -\frac12 \alpha_{n}^{j} (k_0|k_n^j)\ \omega_{a_j(h_n^j),h}\right) $$
belongs to $K(X,Y_2)^{\circ} = S^{1} (X, \mathbb{R} k_0)$.

Therefore, there exist an scalar $\lambda$ such that $\psi= \omega_{\lambda k_0,h}= \lambda \omega_{ k_0,h}.$ Thus, $$
\omega_{k_0,T(h)}=\lambda \omega_{ k_0,h} + \sum_{j=1}^{p}  \sum_{n=1}^{m_j} \alpha_{n}^{j} \left( \frac12 (a_j(h)|k_n^j) \ \omega_{k_0,h_n^j}+\frac12 (h_n^j|h) \ \omega_{k_0,a_j^*(k_n^j)}\right).$$ In particular, $$T(h)=\lambda h + \sum_{j=1}^{p}  \sum_{n=1}^{m_j} \alpha_{n}^{j} \left( \frac12 (a_j(h)|k_n^j) \ {h_n^j}+\frac12 (h_n^j|h) \ {a_j^*(k_n^j)}\right).$$ Since $h$ was arbitrary, $T$ is a multiple of the identity plus a finite rank operator, that is, $T= \lambda Id_{X} + F,$ where $F:X \to X$ is a finite rank operator. Finally, applying that $T^{*} = -T$, we get $\lambda =0$, and hence $T=F$ is a finite rank operator, which is impossible.
\end{proof}

Let $H$ and $K$ be two complex Hilbert spaces. Every rectangular
complex Cartan factor of type I of the form $L(H,K)$ with $\dim (H)= \infty >\dim(K)$
admits a real form which coincides with $L(X,Y)$, where $X$ and $Y$ are real Hilbert spaces with
$\dim (X)= \infty >\dim(Y)$ (compare \cite{Ka97}). The following corollary follows straightforwardly
from Proposition \ref{p complexification} and Theorem \ref{th TWA in L(H,K)}.

\begin{corollary}
\label{c rectangular Cartan factors of type 1} Let $H$ and $K$ be two complex Hilbert
spaces with $\dim (H)= \infty >\dim(K)$.
Then the rectangular complex Cartan factor of type I, $L(H,K)$, and all its real forms
are not ternary weakly amenable.
\end{corollary}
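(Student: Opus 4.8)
The plan is to deduce the statement directly from the structure of real forms of $L(H,K)$ together with Theorem~\ref{th TWA in L(H,K)} and Proposition~\ref{p complexification}, with no new computation required. The key input is the fact recorded just before the statement: when $\dim(H) = \infty > \dim(K)$, the complex rectangular Cartan factor $L(H,K)$ admits a real form that coincides with $L(X,Y)$ for real Hilbert spaces $X,Y$ satisfying $\dim(X) = \infty > \dim(Y)$ (see \cite{Ka97}). Thus the hypotheses of Theorem~\ref{th TWA in L(H,K)} are met by this particular real form, and the strategy is simply to transport the non-amenability established there up to the complexification and then across to every real form.

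First I would fix such a real form $E = L(X,Y)$ of $L(H,K)$, noting that its complexification $\widehat{E}$ is exactly $L(H,K)$. By Theorem~\ref{th TWA in L(H,K)}, $E$ is not ternary weakly amenable. Proposition~\ref{p complexification}, which asserts that a real JB$^*$-triple is ternary weakly amenable if and only if its complexification is, then forces $\widehat{E} = L(H,K)$ to fail ternary weak amenability as well. This settles the claim for the complex Cartan factor itself.

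It remains to rule out ternary weak amenability for \emph{every} real form of $L(H,K)$, and here I would argue by contraposition. Let $F$ be any real form of $L(H,K)$, so that $\widehat{F} = L(H,K)$. If $F$ were ternary weakly amenable, then by the forward direction of Proposition~\ref{p complexification} its complexification $L(H,K)$ would be ternary weakly amenable, contradicting the conclusion of the previous paragraph. Hence no real form of $L(H,K)$ is ternary weakly amenable. I do not anticipate a genuine obstacle here: the only nontrivial ingredient is the cited existence of the real form $L(X,Y)$ from \cite{Ka97}, and once that identification is in hand the equivalence in Proposition~\ref{p complexification} propagates the failure of ternary weak amenability in both directions between $L(H,K)$ and each of its real forms.
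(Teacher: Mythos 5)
Your proposal is correct and follows exactly the argument the paper intends: it invokes the real form $L(X,Y)$ from \cite{Ka97}, applies Theorem~\ref{th TWA in L(H,K)} to it, and then uses the equivalence in Proposition~\ref{p complexification} in both directions, first to transfer the failure of ternary weak amenability from $L(X,Y)$ up to $L(H,K)$, and then from $L(H,K)$ down to each of its real forms. The paper leaves these steps as ``straightforward,'' and your write-up simply makes them explicit.
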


\subsection{Spin factors}

A (complex) JB$^*$-triple $A$ which can be equipped with an inner product $\ip{\cdot}{\cdot}$
and a conjugation $^\sharp$, satisfying
\begin{enumerate}[$(a)$]
\item the norm on $A$ is given by $ \|x\|^2 =  \ip{x}{x} + \sqrt{\ip{x}{x}^2 -|
\ip{x}{{x}^{\sharp}}|^2},$
\item the triple product satisfies $$\J{a}{b}{c}=\frac{1}{2}[\ip{a}{b}c+\ip{c}{b}a-\ip{a}{c^\sharp}b^\sharp],$$
\end{enumerate} is called a \emph{(complex) spin factor}.\smallskip

Throughout this section, $A$ will be a (complex) spin factor and  the duality of $A$ with $A^*$
will be denoted by $\pair{\cdot}{\cdot}$, while $J:A\rightarrow A^*$ will stand for the Riesz map.

The following  lemma shows that ternary derivations from $A$ to $A^*$ are in bijective correspondence
with the (linear) ternary derivations on $A$.

\begin{lemma}\label{l 3.6}
Let $D: A\to A$ be a linear mapping. Then denoting $\delta=J\circ D$, the identities
\begin{enumerate}[$(i)$]
\item $\pair{\J{\delta (a)}{b}{a}}{c}=\ip{c}{\J{D(a)}{b}{a}}$,
\item $\pair{\J{a}{\delta (b)}{a}}{c}=\ip{c}{\J{a}{D(b)}{a}}$,
\end{enumerate} hold for every $a,b,c$ in $A$. Consequently,
$D$ is a (linear) ternary derivation on $A$ if, and only if, $\delta$
lies in $\mathcal D_t(A,A^*).$
\end{lemma}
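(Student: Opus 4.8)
The plan is to transport everything to the inner product of $A$ through the Riesz map $J$ and then reduce the stated equivalence to a single ``diagonal'' identity by polarization. Note first that, since $\ip{\cdot}{\cdot}$ is conjugate linear in its second variable, the Riesz map $J$ is conjugate linear, so $\delta=J\circ D$ is automatically conjugate linear, exactly as a triple derivation into the dual must be; recall that $J$ is normalized by $\pair{J(z)}{y}=\ip{y}{z}$ for all $y,z\in A$.

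First I would unravel the two module products in $(i)$ and $(ii)$ using the defining formulas \eqref{eq module product dual 1} and \eqref{eq module product dual 2} for the dual triple $A$-module together with $\delta=J\circ D$. Since $\J{\delta(a)}{b}{a}$ is of type $\{\cdot,\cdot,\cdot\}_1$, evaluation at $c$ gives
\begin{equation*}
\pair{\J{\delta(a)}{b}{a}}{c}=\delta(a)(\J{b}{a}{c})=\ip{\J{b}{a}{c}}{D(a)},
\end{equation*}
while $\J{a}{\delta(b)}{a}$ is of type $\{\cdot,\cdot,\cdot\}_2$, whose defining formula carries a conjugation, so that a parallel computation yields $\pair{\J{a}{\delta(b)}{a}}{c}=\ip{D(b)}{\J{a}{c}{a}}$. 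Thus $(i)$ and $(ii)$ reduce to the purely Hilbertian identities
\begin{equation*}
\ip{\J{b}{a}{c}}{D(a)}=\ip{c}{\J{D(a)}{b}{a}}\quad\text{and}\quad \ip{D(b)}{\J{a}{c}{a}}=\ip{c}{\J{a}{D(b)}{a}}.
\end{equation*}

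These I would verify by expanding both sides with the spin triple product $(b)$. Each side splits into three inner-product monomials; two of them match immediately, and the pair of terms coming from the $\sharp$-summand agree precisely because the conjugation satisfies the symmetry $\ip{x}{y^\sharp}=\ip{y}{x^\sharp}$ (the same identity that renders the outer variables of the spin product symmetric). The only real obstacle I anticipate is the bookkeeping of conjugate linearity: the scalars extracted from the second slot of $\ip{\cdot}{\cdot}$, from the conjugate linear middle variable of $\J{\cdot}{\cdot}{\cdot}$, and from the conjugations in $J$ and in \eqref{eq module product dual 2} must all be tracked so that the two expansions coincide with no stray complex conjugates surviving.

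For the final assertion, I would first observe that, by polarizing the outer variable (replacing $a$ by $a+c$ in the symmetric bilinear pair of outer slots), both $D$ and $\delta$ are triple derivations if and only if the respective identity holds with the two outer arguments equal. Pairing the diagonal identity for $\delta$ against an arbitrary $c\in A$ and using $\delta=J\circ D$ gives $\pair{\delta\J{a}{b}{a}}{c}=\ip{c}{D\J{a}{b}{a}}$ on the left. On the right, $(i)$ and $(ii)$ dispose of the first two summands, and for the third summand the axiom $(JTM2)$ gives $\J{a}{b}{\delta(a)}=\J{\delta(a)}{b}{a}$, so it is governed by $(i)$ again. Using the outer symmetry $\J{a}{b}{D(a)}=\J{D(a)}{b}{a}$ in $A$ to recombine, the diagonal derivation identity for $\delta$ becomes, for every $c\in A$,
\begin{equation*}
\ip{c}{D\J{a}{b}{a}}=\ip{c}{\J{D(a)}{b}{a}+\J{a}{D(b)}{a}+\J{a}{b}{D(a)}}.
\end{equation*}
As $c$ is arbitrary and $\ip{\cdot}{\cdot}$ is non-degenerate, this holds for all $a,b$ exactly when $D$ satisfies the diagonal derivation identity on $A$, that is, exactly when $D$ is a linear ternary derivation on $A$. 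This establishes the asserted bijective correspondence $D\leftrightarrow\delta=J\circ D$.
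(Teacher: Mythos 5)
Your proof is correct and takes essentially the same route as the paper: both transport the dual-module products back to the inner product via $\pair{J(z)}{y}=\ip{y}{z}$, expand the spin triple product to verify $(i)$ and $(ii)$ (with the conjugation symmetry $\ip{x}{y^\sharp}=\ip{y}{x^\sharp}$ absorbing the $\sharp$-terms), and then deduce the equivalence by pairing the diagonal identity for $\J{a}{b}{a}$ against an arbitrary $c$. Your explicit polarization step and the appeal to nondegeneracy simply make precise what the paper leaves implicit in its concluding ``similarly'' remark.
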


\begin{proof} To prove the first two statements, note that for  $a,b,c\in A$,
\begin{eqnarray*}
2
\pair{\Jc{\delta (a)}{b}{a}}{c}&=&2\pair{\delta (a)}{\J{b}{a}{c}}=2\ip{\J{b}{a}{c}}{D(a)}\\
&=&\ip{\ip{b}{a}c+\ip{c}{a}b-\ip{b}{c^\sharp}a^\sharp}{D(a)}\\
&=&\ip{\ip{b}{a}c}{D(a)}+\ip{\ip{c}{a}b}{D(a)}-\ip{\ip{b}{c^\sharp}a^\sharp}{D(a)}\\
&=&\ip{c}{\ip{a}{b}D(a)}+\ip{c}{\ip{D(a)}{b}a}-\ip{c}{\ip{D(a)}{a^\sharp}b^\sharp}\\
&=&\ip{c}{[\ip{a}{b}D(a)+\ip{D(a)}{b}a-\ip{D(a)}{a^\sharp}b^\sharp]}\\
&=&2\ip{c}{\Jc{D(a)}{b}{a}},
\end{eqnarray*}
and
\begin{eqnarray*}
\pair{\Jc{a}{\delta (b)}{a}}{c}&=&\overline{\pair{\delta (b)}{\J{a}{c}{a}}}=\ip{D(b)}{\J{a}{c}{a}}\\
&=&\ip{D(b)}{[ \ip{a}{c}a-\frac{1}{2} \ip{a}{a^\sharp}c^\sharp]}\\
&=&\ip{c}{[ \ip{a}{D(b)}a-\frac{1}{2} \ip{a}{a^\sharp} D(b)^\sharp]}
=\ip{c}{\Jc{a}{D(b)}{a}}.
\end{eqnarray*}

Finally, if $D$ is a (linear) ternary derivation on $A$ and $x\in A$, by $(i)$ and $(ii)$ we have
\begin{eqnarray*}
\pair{\delta\J{a}{b}{a}}{x}&=&\ip{x}{D\J{a}{b}{a}}\\
&=&\ip{x}{2\Jc{D(a)}{b}{a}}+\ip{x}{\Jc{a}{D(b)}{a}}\\
&=&\pair{2\Jc{\delta (a)}{b}{a}}{x}+\pair{\Jc{a}{\delta (b)}{a}}{x}
\end{eqnarray*}
so that $\delta\in \mathcal D_t(A,A^*)$. Similarly,
if $\delta\in \mathcal D_t(A,A^*)$, then $D$ is a (linear) ternary derivation on $A$.
\end{proof}

We deal now with inner ternary derivations.

\begin{lemma}\label{l 3.7}
For each element $a$ in $A$, let $\varphi=J(a)\in A^*$. Then for all $b,x,y\in A$, we have:
\begin{enumerate}[$(i)$]
\item $\pair{\J{b}{\varphi}{x}}{y}=\ip{y}{\J{b}{a}{x}},$
\item $\pair{\J{\varphi}{b}{x}}{y}=\ip{y}{\J{a}{b}{x}}$.
\end{enumerate} It follows that a linear mapping $D:A \to A$ is an inner ternary derivation if, and only if,
$\delta=J\circ D\in \mathcal{I}nn_t(A,A^*).$
\end{lemma}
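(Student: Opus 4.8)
The plan is to establish the two intertwining identities (i) and (ii) by direct expansion, and then read off the inner-derivation correspondence from them together with the bijectivity of the Riesz map $J$. Throughout I will use that $\pair{J(w)}{y}=\ip{y}{w}$, so that $J$ is a conjugate-linear bijection of $A$ onto $A^*$ (the Riesz map of the Hilbert space $(A,\ip{\cdot}{\cdot})$, whose norm is equivalent to the spin norm), exactly as in the computation of Lemma~\ref{l 3.6}.

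For (i) I would unwind the dual-module product with the functional in the middle slot, which by $(\ref{eq module product dual 2})$ reads $\pair{\J{b}{\varphi}{x}}{y}=\overline{\varphi\J{b}{y}{x}}$. Substituting $\varphi=J(a)$ turns the left side into $\overline{\ip{\J{b}{y}{x}}{a}}=\ip{a}{\J{b}{y}{x}}$, and I would expand $\J{b}{y}{x}$ with the spin product $\J{p}{q}{r}=\frac12[\ip{p}{q}r+\ip{r}{q}p-\ip{p}{r^\sharp}q^\sharp]$. Expanding the target $\ip{y}{\J{b}{a}{x}}$ the same way, the two ``pure inner product'' terms match after replacing each conjugated scalar $\overline{\ip{u}{v}}$ by $\ip{v}{u}$, and the two remaining $\sharp$-terms coincide precisely because of the compatibility of the conjugation with the inner product, $\ip{a}{y^\sharp}=\ip{y}{a^\sharp}$ (the symmetry of the bilinear form $(u,v)\mapsto\ip{u}{v^\sharp}$, equivalently the antiunitarity $\ip{u^\sharp}{v^\sharp}=\ip{v}{u}$ of $^\sharp$). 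For (ii) the argument is identical, now using the outer-slot product $(\ref{eq module product dual 1})$ in the form $\pair{\J{\varphi}{b}{x}}{y}=\varphi\J{b}{x}{y}=\ip{\J{b}{x}{y}}{a}$; here there is no conjugation to clear, the two principal terms again match termwise, and the $\sharp$-terms reconcile by the same symmetry $\ip{b}{y^\sharp}=\ip{y}{b^\sharp}$.

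Since $y$ is arbitrary, (i) and (ii) say exactly that, for $\varphi=J(a)$, one has $\J{b}{\varphi}{x}=J\J{b}{a}{x}$ and $\J{\varphi}{b}{x}=J\J{a}{b}{x}$ in $A^*$, for every $x$; that is, $J$ intertwines the triple product with the dual-module action of $\varphi=J(a)$. Subtracting, the module inner derivation $\delta(b,\varphi):x\mapsto\J{b}{\varphi}{x}-\J{\varphi}{b}{x}$ equals $J\circ(L(b,a)-L(a,b))=J\circ\delta(b,a)$. The asserted equivalence is then formal: if $D=\sum_{i}(L(b_i,a_i)-L(a_i,b_i))$ is an inner ternary derivation on $A$, then $\delta=J\circ D=\sum_i\delta(b_i,J(a_i))\in\mathcal{I}nn_t(A,A^*)$; conversely any $\delta=\sum_i\delta(b_i,\varphi_i)\in\mathcal{I}nn_t(A,A^*)$ may be rewritten with $\varphi_i=J(a_i)$ and $a_i=J^{-1}(\varphi_i)$, giving $\delta=J\circ\sum_i(L(b_i,a_i)-L(a_i,b_i))$, whence $D=\sum_i(L(b_i,a_i)-L(a_i,b_i))$ is inner by injectivity of $J$.

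I expect the only real friction to be bookkeeping: keeping track of which dual-module formula carries a complex conjugate, of the conjugate-linearity of $J$ and of the middle variable of the spin product, and above all of isolating the single structural input the computation truly needs, namely the symmetry $\ip{u}{v^\sharp}=\ip{v}{u^\sharp}$. Once that is in hand, both (i) and (ii) are termwise matching, and the inner-derivation statement follows from the intertwining relations and the bijectivity of the Riesz map.
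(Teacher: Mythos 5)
Your proposal is correct and takes essentially the same approach as the paper: the paper dismisses (i) and (ii) as ``straightforward calculations'' (which you carry out explicitly, correctly isolating the one structural input $\ip{u}{v^\sharp}=\ip{v}{u^\sharp}$, i.e.\ the antiunitarity of the conjugation, which is in fact forced by the outer-variable symmetry of the spin triple product), and then obtains the inner-derivation equivalence from exactly your intertwining computation $\pair{\delta(b,J(a))(x)}{y}=\ip{y}{(L(b,a)-L(a,b))(x)}$ together with the bijectivity of the Riesz map. No gaps.
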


\begin{proof}
The  first two statements follow by straightforward calculations.
If $D$ is an inner (linear) derivation on $A$ of the form $D= \delta (b,a)$ with $a,b\in A$,
then for $x,y\in A$,
\begin{eqnarray*}
\pair{\delta (x)}{y}&=&\ip{y}{D(x)}\\
&=&\ip{y}{\J{b}{a}{x}-\J{a}{b}{x}}\\
&=&\pair{\Jc{b}{J(a)}{x}}{y}-\pair{\Jc{J(a)}{b}{x}}{y}\\
&=&\pair{\delta(b,J(a)) (x)}{y}
\end{eqnarray*}
so that $\delta\in \mathcal Inn_t(A,A^*)$. Similarly, if $\delta\in \mathcal Inn_t(A,A^*)$,
then $D$ is an inner (linear) derivation on $A$.
\end{proof}

\begin{proposition}\label{p 3.8}
A spin factor is ternary weakly amenable if, and only if, it is finite dimensional.
\end{proposition}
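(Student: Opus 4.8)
The plan is to reduce the assertion about $\mathcal{D}_t(A,A^*)$ to a statement about linear ternary derivations on $A$ itself, using the two preceding lemmas. Since $J$ is a conjugate-linear isometric bijection of $A$ onto $A^*$, the assignment $D\mapsto\delta=J\circ D$ is a bijection from the linear maps $A\to A$ onto the conjugate-linear maps $A\to A^*$; Lemma \ref{l 3.6} shows it carries the linear ternary derivations on $A$ \emph{exactly} onto $\mathcal{D}_t(A,A^*)$, and Lemma \ref{l 3.7} shows it carries the inner ternary derivations on $A$ \emph{exactly} onto $\mathcal{I}nn_t(A,A^*)$. Hence $A$ is ternary weakly amenable if and only if every linear ternary derivation on $A$ is inner, that is, if and only if $A$ has the inner derivation property in the sense of \cite{BarFri} and \cite{HoMarPeRu}. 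The proposition thus becomes the statement that a spin factor has this property precisely when it is finite-dimensional. For the \emph{if} implication I would simply invoke Proposition \ref{p finite dimensional}.

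For the converse I would argue by contraposition: assuming $\dim A=\infty$, I would exhibit a linear ternary derivation $D$ on $A$ that is not inner, whence $J\circ D$ is the desired non-inner element of $\mathcal{D}_t(A,A^*)$. First I would determine which linear $D:A\to A$ are ternary derivations by substituting the explicit spin triple product into $D\{a,b,c\}=\{Da,b,c\}+\{a,Db,c\}+\{a,b,Dc\}$ and comparing coefficients of $a$, $c$ and $b^\sharp$. This should isolate two conditions: that $D$ is skew-adjoint for the inner product, $D^*=-D$, and that $D$ intertwines the conjugation up to a scalar, $D\sharp-\sharp D=\mu\,\sharp$ for some $\mu\in i\mathbb{R}$ (the multiplication $D=\mu\,\mathrm{Id}$ with $\mu\in i\mathbb{R}$ is already a derivation because the triple product is conjugate-linear in the middle variable).

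Next I would compute $\delta(b,a)=L(b,a)-L(a,b)$ explicitly and observe that it is the sum of an imaginary scalar multiple of the identity and a rank-at-most-four operator; consequently every element of $\mathcal{I}nn_t(A,A)$ has the form $\lambda\,\mathrm{Id}+F$ with $\lambda\in i\mathbb{R}$ and $F$ of finite rank. To finish, writing $A=A^\sharp\oplus iA^\sharp$ where $A^\sharp=\{x:x^\sharp=x\}$ is an infinite-dimensional real Hilbert space, I would take a bounded skew-symmetric operator on $A^\sharp$ of infinite rank (a direct sum of planar rotations, say) and extend it complex-linearly to $D:A\to A$. Then $D$ commutes with $\sharp$ (so $\mu=0$) and satisfies $D^*=-D$, hence is a ternary derivation; but it cannot equal $\lambda\,\mathrm{Id}+F$, since $\mu=0$ forces $\lambda=0$ while $D$ has infinite rank, so $D$ is not inner. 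Alternatively, since spin factors are exactly the Cartan factors of type IV, one may bypass this construction and quote \cite{HoMarPeRu}, where infinite-dimensional type IV factors are shown to carry non-inner derivations into themselves.

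The main obstacle I anticipate is the bookkeeping in the two classifications: extracting the precise pair of conditions (skew-adjointness together with the twisted intertwining $D\sharp-\sharp D=\mu\,\sharp$) from the coefficient comparison, and then pinning down the inner derivations as exactly ``imaginary scalar plus finite rank''. The delicate point is the scalar ambiguity: one must be certain that an infinite-rank derivation commuting with $\sharp$ genuinely lies outside the inner class, which is why the construction is arranged so that $\mu=0$ annihilates the only possible scalar contribution and leaves a finite-rank requirement that the chosen operator violates.
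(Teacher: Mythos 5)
Your proposal is correct, and its skeleton coincides with the paper's: the paper's entire proof of Proposition~\ref{p 3.8} is the observation that Lemmas~\ref{l 3.6} and~\ref{l 3.7} reduce ternary weak amenability of a spin factor to the inner derivation property, followed by two citations --- Proposition~\ref{p finite dimensional} for the finite-dimensional direction and \cite[Theorem 3]{HoMarPeRu} (spin factors being the type IV Cartan factors) for the infinite-dimensional one, exactly the ``alternative'' route you mention at the end. Where you genuinely differ is in offering a self-contained proof of the hard direction in place of the citation, and your sketch checks out: substituting the spin triple product into the derivation identity does yield precisely the two conditions $D^*=-D$ and $D\sharp-\sharp D=\mu\,\sharp$ (the constraint $\mu\in i\mathbb{R}$ is in fact forced by skew-adjointness together with $(\sharp D\sharp)^*=\sharp D^*\sharp$, so it need not be imposed separately); each $\delta(b,a)=L(b,a)-L(a,b)$ is indeed $2i\,\mathrm{Im}\ip{b}{a}\cdot\frac12\,\mathrm{Id}$ plus an operator of rank at most four, since $x\mapsto\ip{b}{x^\sharp}a^\sharp$ is linear of rank one; and your complexified infinite-rank rotation commutes with $\sharp$, so any representation $D=\lambda\,\mathrm{Id}+F$ with $\lambda\in i\mathbb{R}$ and $F$ finite rank would force $2\lambda\sharp=\sharp F-F\sharp$, whence $\lambda=0$ and a contradiction with infinite rank. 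In effect you reprove the type IV case of \cite[Theorem 3]{HoMarPeRu} by the same method the paper uses for Hilbert spaces in Proposition~\ref{p derivations from real Hibert space to its dual} and Corollary~\ref{c inner derivations real Hilbert} (classify all derivations as skew operators, classify inner ones as scalar-plus-finite-rank). What your route buys is a paper that is self-contained on this point and makes the scalar subtlety ($i\,\mathrm{Id}$ is a derivation because the middle variable is conjugate-linear) explicit; what the paper's route buys is brevity and the transfer of all coefficient bookkeeping to an established reference.
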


\begin{proof}
Combining Lemmas \ref{l 3.6} and \ref{l 3.7}, $A$ is ternary weakly amenable if, and only if, $A$ has the inner derivation property.
Thus, applying \cite[Theorem 3]{HoMarPeRu} and the fact that every finite dimensional JB$^*$-triple
has the inner derivation property (cf. Proposition \ref{p finite dimensional}), we get the desired equivalence.
\end{proof}

The real forms of (complex) spin factors are called real spin factors. The next corollary is a direct consequence of Propositions \ref{p 3.8} and \ref{p complexification}.

\begin{corollary}\label{c 3.8 real}
A real spin factor is ternary weakly amenable if, and only if, it is finite dimensional.$\hfill\Box$
\end{corollary}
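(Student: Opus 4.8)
The plan is to chain together the two results immediately preceding this corollary. First I would unfold the definition: a real spin factor $E$ is, by definition, a real form of a complex spin factor, so there is a complex spin factor together with a conjugation $\tau$ exhibiting $E$ as $\widehat{E}^{\tau}$. The one structural point I would need to pin down is that the complexification $\widehat{E} = E \oplus iE$ really is a (complex) spin factor — that is, that the complex JB$^*$-triple structure furnished by \cite[Proposition 2.2]{IsKaRo95} on $\widehat{E}$ carries the inner product and conjugation of the spin factor structure. I expect this to be immediate from the definition of ``real spin factor'' (a real form of a spin factor) rather than to require any fresh computation.

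Granting that, the argument is a two-step equivalence. By Proposition~\ref{p complexification}, the real JB$^*$-triple $E$ is ternary weakly amenable if and only if its complexification $\widehat{E}$ is ternary weakly amenable. By Proposition~\ref{p 3.8}, the complex spin factor $\widehat{E}$ is ternary weakly amenable if and only if it is finite dimensional. Composing the two, $E$ is ternary weakly amenable if and only if $\widehat{E}$ is finite dimensional.

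Finally I would observe that finite dimensionality passes freely between $E$ and $\widehat{E}$: since $\widehat{E} = E \oplus iE$, the complex dimension of $\widehat{E}$ equals the real dimension of $E$, so one is finite precisely when the other is. Substituting this into the equivalence from the previous paragraph yields that $E$ is ternary weakly amenable if and only if $E$ itself is finite dimensional, which is exactly the assertion of the corollary.

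The main (and essentially the only) obstacle is the bookkeeping verification that the complexification of a real spin factor is again a spin factor, so that Proposition~\ref{p 3.8} legitimately applies to $\widehat{E}$; once that identification is in hand, the corollary is a purely formal composition of the two cited equivalences with the trivial transfer of finite dimensionality.
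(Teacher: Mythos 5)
Your proposal is correct and follows exactly the paper's route: the paper derives this corollary as a direct consequence of Proposition~\ref{p 3.8} and Proposition~\ref{p complexification}, with the identification of the complexification $\widehat{E}=E\oplus iE$ with the ambient complex spin factor being immediate from the definition of a real spin factor as a real form (via the uniqueness in \cite[Proposition 2.2]{IsKaRo95}), just as you anticipated. Your closing observation that $\dim_{\CC}\widehat{E}=\dim_{\RR}E$, so finiteness of dimension transfers, completes the argument in the same way the paper leaves implicit.
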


The following questions have been intractable up to this moment.

\begin{problem}
Are Cartan factors of type II and III ternary weakly amenable?
\end{problem}

\begin{problem}
Does there exist an infinite-rank rectangular Cartan factor of type I
which is ternary weakly amenable?
\end{problem}

\section{Commutative JB$^*$-triples are almost ternary weakly amenable}\bigskip

In this section we prove that every commutative real or complex
JB$^*$-triple is almost ternary weakly amenable. More concretely,
we prove that every ternary derivation from a commutative real or
complex JB$^*$-triple into its dual can be approximated in norm by
an inner derivation.\smallskip

We shall make use of the Gelfand representation theory for
commutative JB$^*$-triples (cf. \cite[\S 1]{Ka} and \cite{FriRus83}).
Let us denote $\mathbb{T} := \{ \alpha\in \mathbb{C}: |\alpha|=1\}$.
Given a commutative (complex) JB$^*$-triple $E$, there exists a principal $\mathbb{T}$-bundle $\Lambda =\Lambda(E)$, i.e.
a locally compact Hausdorff space $\Lambda$ together with a continuous mapping $\mathbb{T} \times \Lambda \to \Lambda$, $(t,\lambda) \mapsto t \lambda$ such that $s(t\lambda) = (s t ) \lambda$, $1 \lambda = \lambda$ and $t \lambda = \lambda \Rightarrow t=1$, satisfying that $E$ is JB$^*$-triple isomorphic to $$\mathcal{C}_{0}^{\mathbb{T}} (\Lambda) := \{f\in
\mathcal{C}_{0} (\Lambda) : f(t \lambda) = t f(\lambda),
\forall t\in \mathbb{T}, \lambda\in \Lambda \}. $$ 
We notice that $\mathcal{C}_{0}^{\mathbb{T}} (\Lambda) $ is a
JB$^*$-subtriple of the commutative C$^*$-algebra
$\mathcal{C}_{0} (\Lambda)$. Every commutative JB$^*$-triple is 
a $C_{_{\sum}}$-space and hence a complex Lindenstrauss space in the terminology of Olsen \cite{Ol74}.\smallskip

An element $e$ in a Jordan triple $E$ is called \emph{tripotent}
if $\J eee =e$. Each tripotent $e$ in $E$ induces a
decomposition of $E$ (called \emph{Peirce decomposition}) in the
form: $$E=E_0(e)\oplus E_1(e)\oplus E_2(e),$$ where
$E_k(e)=\{x\in E:L(e,e)x=\frac k2 x\}$ for $k=0,1,2$.\smallskip

The Peirce space $E_2 (e)$ is a unital JB$^*$-algebra with unit $e$,
product $x\circ_e y := \J xey$ and involution $x^{\sharp_e} := \J
exe$, respectively.\smallskip

A tripotent $e$ in $E$ is said to be \emph{unitary} if $L(e,e)$
coincides with the identity map on $E,$ equivalently, $E_2 (e) =
E$. When $E_0 (e) =\{0\}$, the tripotent $e$ is called
\emph{complete}.\smallskip

The proof given in Lemma \ref{l delta(1) anti-symmetric} remains
valid in the following setting:

\begin{lemma}
\label{l delta(1) anti-symmetric unitary} Let $E$ be a
JB$^*$-triple containing a unitary tripotent $u$. Every ternary
derivation $\delta $ in $\mathcal{D}_t (E,E^*)$ satisfies the
identity $\delta(u)^{\sharp_u} =- \delta(u)$, that is,
$\overline{\delta(u) (a^{\sharp_u})} =- \delta(u) (a) ,$ for every
$a$ in $E$.$\hfill\Box$
\end{lemma}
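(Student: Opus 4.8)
The plan is to mimic verbatim the proof of Lemma~\ref{l delta(1) anti-symmetric}, with the unitary tripotent $u$ playing the role of the unit $1$ and the Peirce involution $\sharp_u$ playing the role of $*$. The starting point is that $u$ is a tripotent, so $\J uuu = u$, and hence applying the ternary derivation identity $(\ref{eq triple derivation})$ yields
$$\delta(u) = \delta\J uuu = \J{\delta(u)}{u}{u} + \J{u}{\delta(u)}{u} + \J{u}{u}{\delta(u)}.$$
The substance of the argument is then to evaluate each of the three terms on the right at an arbitrary $a\in E$, using the module products on $E^*$ recorded in $(\ref{eq module product dual 1})$ and $(\ref{eq module product dual 2})$.

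For the two outer terms I would invoke $(\ref{eq module product dual 1})$, which gives $\J{\delta(u)}{u}{u}(a) = \J{u}{u}{\delta(u)}(a) = \delta(u)\J{u}{u}{a}$. The key structural input here is that $u$ is \emph{unitary}, so $L(u,u) = \operatorname{Id}_{E}$ and therefore $\J{u}{u}{a} = a$; each outer term thus collapses to $\delta(u)(a)$. For the middle term I would use $(\ref{eq module product dual 2})$, obtaining $\J{u}{\delta(u)}{u}(a) = \overline{\delta(u)\J{u}{a}{u}}$. Since $E = E_2(u)$ and the involution of this unital JB$^*$-algebra is precisely $a^{\sharp_u} = \J{u}{a}{u}$, this term equals $\overline{\delta(u)(a^{\sharp_u})}$, which is by definition $\delta(u)^{\sharp_u}(a)$.

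Substituting these three evaluations back gives $\delta(u)(a) = 2\,\delta(u)(a) + \delta(u)^{\sharp_u}(a)$ for every $a\in E$, whence $\delta(u)^{\sharp_u} = -\delta(u)$, as claimed. I expect no genuine obstacle: the one point requiring care, and the reason the earlier proof transfers unchanged, is to observe that unitarity of $u$ supplies both ingredients at once, namely $\J{u}{u}{a} = a$ (from $L(u,u) = \operatorname{Id}_E$) and the fact that $\J{u}{a}{u}$ is the honest JB$^*$-algebra involution on $E = E_2(u)$. Note that continuity of $\delta$ is not even needed for this identity, since the whole computation is carried out pointwise.
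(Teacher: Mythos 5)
Your proposal is correct and is exactly the paper's argument: the paper proves this lemma simply by remarking that the proof of Lemma~\ref{l delta(1) anti-symmetric} remains valid in this setting, which is precisely the transfer you carry out, with $L(u,u)=\operatorname{Id}_E$ collapsing the outer terms via $(\ref{eq module product dual 1})$ and $\J uau=a^{\sharp_u}$ identifying the middle term via $(\ref{eq module product dual 2})$. You merely supply the details that the paper leaves implicit, and your observation that unitarity of $u$ furnishes both needed ingredients is the right justification for why the earlier proof transfers unchanged.
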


The following lemma summarises some basic properties of
commutative JB$^*$-triples, an implicit proof can be found combining
Theorems 2 and 4 in \cite{FriRus82}.

\begin{lemma}\label{l unital abelian JB*-triple}
Let $u$ be a norm-one element in a commutative JB$^*$-triple
$E\cong \mathcal{C}_{0}^{\mathbb{T}} (\Lambda (E))$. The following statements are equivalent:
\begin{enumerate}[$(a)$] \item $u$ is a complete tripotent;
\item $u$ is a unitary element;
\item $u$ is an extreme point of the unit ball of $E$.
\end{enumerate} If $u$ satisfies one of the above conditions
then $E$ is a commutative C$^*$-algebra with unit $u$, product and involution
given by $a\circ_u b := \J aub$ and $a^{\sharp_u} := \J
uau$ ($a,b\in E$), respectively.$\hfill\Box$
\end{lemma}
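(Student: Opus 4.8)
The plan is to work entirely inside the Gelfand representation $E\cong\mathcal{C}_{0}^{\mathbb{T}}(\Lambda)$, where $\Lambda=\Lambda(E)$ is the associated principal $\mathbb{T}$-bundle and the triple product is the one inherited pointwise from the commutative C$^*$-algebra $\mathcal{C}_{0}(\Lambda)$, namely $\J{f}{g}{h}(\lambda)=f(\lambda)\overline{g(\lambda)}h(\lambda)$. First I would identify the tripotents: since $\J{u}{u}{u}(\lambda)=|u(\lambda)|^2u(\lambda)$, the equation $\J{u}{u}{u}=u$ forces $u(\lambda)\in\{0\}\cup\mathbb{T}$ for every $\lambda$, so that $S:=\{\lambda : u(\lambda)\neq 0\}=u^{-1}(\mathbb{T})$ is a clopen (indeed compact, as $u$ vanishes at infinity) subset of $\Lambda$ on which $|u|\equiv 1$. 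Because $L(u,u)x$ acts as multiplication by $|u|^2$, the eigenvalue equation $L(u,u)x=\frac{k}{2}x$ solves pointwise, giving $E_2(u)=\{x\in E : x|_{\Lambda\setminus S}=0\}$, $E_0(u)=\{x\in E : x|_{S}=0\}$, and crucially $E_1(u)=\{0\}$.

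Given $E=E_0(u)\oplus E_1(u)\oplus E_2(u)$ together with $E_1(u)=\{0\}$, the equivalence $(a)\Leftrightarrow(b)$ is immediate: within $E=E_0(u)\oplus E_2(u)$, completeness $E_0(u)=\{0\}$ and unitarity $E_2(u)=E$ are complementary, hence equivalent. Since the representation is faithful, the functions in $E$ have no common zero, so $E_2(u)=E$ forces $\Lambda\setminus S=\emptyset$; thus $(b)$ is equivalent to the concrete condition $|u(\lambda)|=1$ for all $\lambda\in\Lambda$ (and $\Lambda$ is then compact). For $(b)\Leftrightarrow(c)$ I would argue directly: if $|u|\equiv 1$ and $u=\tfrac12(f+g)$ with $\|f\|,\|g\|\le 1$, then at each $\lambda$ the point $u(\lambda)$ lies on the unit circle and is the midpoint of $f(\lambda),g(\lambda)$ in the closed unit disc of $\CC$; as circle points are extreme in the disc, $f(\lambda)=g(\lambda)=u(\lambda)$, whence $f=g=u$ and $u$ is extreme. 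Conversely, if $|u(\lambda_0)|<1$ for some $\lambda_0$, I would produce a nonzero $h\in\mathcal{C}_{0}^{\mathbb{T}}(\Lambda)$ supported on the $\mathbb{T}$-invariant neighbourhood where $|u|$ stays bounded away from $1$, with $\|h\|$ small enough that $\|u\pm h\|\le 1$; then $u=\tfrac12\big((u+h)+(u-h)\big)$ is a nontrivial convex decomposition, so $u$ is not extreme.

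The step I expect to be the main obstacle is precisely this last construction: the bump $h$ must respect the equivariance $h(t\lambda)=t\,h(\lambda)$, so an ordinary Urysohn or partition-of-unity function does not qualify, and one must build $h$ from local sections of the bundle (equivalently, one may invoke the extreme-point characterisation of complete tripotents for general JB$^*$-triples, which is exactly the content being extracted from Theorems 2 and 4 of \cite{FriRus82}).

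Finally, assuming $u$ unitary so that $|u|\equiv 1$, the last assertion follows by transport of structure. The Peirce-$2$ calculus already makes $E=E_2(u)$ a unital JB$^*$-algebra with unit $u$, product $a\circ_u b=\J{a}{u}{b}$ and involution $a^{\sharp_u}=\J{u}{a}{u}$; commutativity of $E$ as a triple, via the abelian identity $\J{a}{u}{\J{b}{u}{c}}=\J{\J{a}{u}{b}}{u}{c}$, makes $\circ_u$ associative, and an associative JB$^*$-algebra is a commutative C$^*$-algebra. Concretely, I would verify that $\Phi\colon E\to\mathcal{C}(\Lambda/\mathbb{T})$, $\Phi(a)=\overline{u}\,a$ (well defined and $\mathbb{T}$-invariant since $|u|=1$), carries $\circ_u$ to pointwise multiplication and $\sharp_u$ to complex conjugation, exhibiting $(E,\circ_u,\sharp_u)$ as a commutative C$^*$-algebra with unit $u$.
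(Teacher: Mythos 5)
Your proposal is essentially correct, but it takes a genuinely different route from the paper, which offers no argument at all for this lemma: the statement is closed with $\Box$ and the surrounding text only remarks that ``an implicit proof can be found combining Theorems 2 and 4 in \cite{FriRus82}'', i.e.\ it is extracted from the Friedman--Russo theory of contractive projections on $C_0(K)$. You instead give a self-contained verification inside the Gelfand model, and the computations check out: $\J uuu=|u|^2u$ does force $u(\Lambda)\subseteq\{0\}\cup\mathbb{T}$, so $S=\{|u|=1\}$ is clopen and compact; $L(u,u)$ is multiplication by the idempotent $|u|^2=\chi_{S}$, whence the Peirce spaces are the restriction ideals you describe and $E_1(u)=\{0\}$ --- which is the genuinely commutative content making completeness and unitarity complementary halves of $E=E_0(u)\oplus E_2(u)$; the pointwise strict-convexity argument for $(b)\Rightarrow(c)$ is sound; and the transport of structure via $\Phi(a)=\overline{u}\,a$ is correct and even surjective onto $\mathcal{C}^{1}(\Lambda)\cong\mathcal{C}(\Lambda/\mathbb{T})$, since $c\mapsto uc$ is a right inverse, which identifies $(E,\circ_u,\sharp_u)$ concretely as a unital commutative C$^*$-algebra with unit $u$. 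The one step you rightly flag as the obstacle --- the equivariant bump in $(c)\Rightarrow(b)$ --- closes with a device the paper itself uses in the proof of Proposition \ref{prop surjectivity}: the Haar-averaging projection $\pi(g)(\lambda)=\int_{\mathbb{T}}t^{-1}g(t\lambda)\,d\mu(t)$ maps $\mathcal{C}_0(\Lambda)$ onto $E$ and preserves supports contained in $\mathbb{T}$-invariant sets (and $\{|u|<1-\varepsilon\}$ is invariant because $|u|$ is $\mathbb{T}$-invariant), so averaging an ordinary Urysohn bump produces your $h$ --- with the caveat that on the orbit $\mathbb{T}\lambda_0$ the bump must restrict to the character $t\lambda_0\mapsto t$ rather than to a constant, since a constant averages to zero under $\pi$ (a slip that, incidentally, also occurs in the paper's own proof of Proposition \ref{prop surjectivity}). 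What your approach buys is transparency and independence from \cite{FriRus82}; what the citation buys is generality: $(a)\Leftrightarrow(c)$ holds in \emph{every} JB$^*$-triple (extreme points of the closed unit ball are exactly the complete tripotents), so the only commutative input needed is $E_1(u)=\{0\}$, precisely the fact your pointwise computation isolates.
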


\begin{corollary}\label{l unital abelian JB*-triple b}
Every commutative JB$^*$-triple, $E,$ containing a complete
tripotent $u$ is ternary weakly amenable. Further, every ternary
derivation $\delta: E\to E^*$ can be written in the form $\delta =
-\frac{1}{2}\delta(u,\delta(u))= \J {\delta (u)}{u}{.}$.
\end{corollary}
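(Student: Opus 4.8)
The plan is to reduce the entire statement to the unital commutative C$^*$-algebra case already disposed of in Section~3, using the complete tripotent $u$ as a unit. By Lemma~\ref{l unital abelian JB*-triple} a complete tripotent is unitary, so $E_2(u)=E$ and $E$ is a unital commutative C$^*$-algebra under the product $a\circ_u b=\J aub$ and involution $a^{\sharp_u}=\J uau$. The first thing I would record is that, on this C$^*$-algebra, the original JB$^*$-triple product of $E$ coincides with the elementary product $\J abc=\frac12\big(a\circ_u b^{\sharp_u}\circ_u c+c\circ_u b^{\sharp_u}\circ_u a\big)$; this is the standard recovery of the triple product from the Peirce-$2$ JB$^*$-algebra structure, which is exactly the identification furnished by Lemma~\ref{l unital abelian JB*-triple}. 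Consequently $\mathcal{D}_t(E,E^*)$ is literally the set of ternary derivations of the unital commutative C$^*$-algebra $(E,\circ_u,\sharp_u)$ into its dual, and Corollary~\ref{c 3.4.6} gives at once that $E$ is ternary weakly amenable, settling the first assertion.

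For the explicit form I would apply Lemma~\ref{l 3.4.3} to $(E,\circ_u,\sharp_u)$, reading $u$ for $1$ and $\sharp_u$ for $*$. This writes an arbitrary $\delta\in\mathcal{D}_t(E,E^*)$ as $\delta=\delta_0+\delta_1$, where $\delta_0\in\mathcal{D}_t^o(E,E^*)$ and $\delta_1(a)=\delta(u)\circ_u a^{\sharp_u}$ is precisely the inner derivation $-\frac12\delta(u,\delta(u))$. The crux is to show that $\delta_0$ vanishes. By Lemma~\ref{lem:4.5}, $\delta_0$ corresponds to a Jordan $^*$-derivation $D_0=\delta_0\circ *$ from $E$ into $E^*$; but a commutative C$^*$-algebra is Jordan weakly amenable, since every Jordan derivation into the dual is continuous by \cite[Cor.\ 21]{PeRu}, hence associative by Johnson's theorem \cite{John96}, hence inner by Haagerup \cite{Haa83}, and inner derivations vanish in the commutative setting. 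Therefore $D_0=0$, so $\delta_0=0$ and $\delta=\delta_1=-\frac12\delta(u,\delta(u))$.

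It then remains only to recognise $\delta_1$ as the map $\J{\delta(u)}{u}{\cdot}$, and here I would simply unwind the dual module product~(\ref{eq module product dual 1}). Writing $\varphi=\delta(u)$, one computes $\J{\varphi}{u}{a}(x)=\varphi\J uax=\varphi\big(a^{\sharp_u}\circ_u x\big)=\big(\varphi\circ_u a^{\sharp_u}\big)(x)=\delta_1(a)(x)$, where $\J uax=a^{\sharp_u}\circ_u x$ because $u$ is the unit. Combined with the previous paragraph this yields $\delta=-\frac12\delta(u,\delta(u))=\J{\delta(u)}{u}{\cdot}$, as claimed.

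I do not anticipate a serious obstacle, because once $u$ plays the role of the unit the argument is a faithful transcription of the results of Section~3. The one point deserving care is the very first step: one must be certain that the JB$^*$-triple product of $E$ agrees with the C$^*$-triple product attached to $(E,\circ_u,\sharp_u)$, so that Lemmas~\ref{l 3.4.3} and~\ref{lem:4.5} apply without modification. This agreement is guaranteed by Lemma~\ref{l unital abelian JB*-triple} together with the behaviour of the triple product on the Peirce-$2$ space of a unitary tripotent.
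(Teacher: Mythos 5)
Your proposal is correct and follows essentially the same route as the paper: Lemma~\ref{l unital abelian JB*-triple} turns $E$ into a unital commutative C$^*$-algebra with unit $u$, the decomposition $\delta=\delta_0+\delta_1$ of Lemma~\ref{l 3.4.3} applies, and $\delta_0$ is killed via Lemma~\ref{lem:4.5} together with Jordan weak amenability of commutative C$^*$-algebras (the paper compresses exactly this into its citation ``by the proof of Lemma~\ref{l 3.4.3} (see also Corollary~\ref{c 3.4.6})'', so your spelling it out is faithful, not new). The only cosmetic divergence is the final identity: you obtain $\delta_1=\J{\delta(u)}{u}{\cdot}$ by directly unwinding the dual module product using commutativity, whereas the paper computes $-\frac12\delta(u,\delta(u))(a)(b)$ and invokes the anti-symmetry $\delta(u)^{\sharp_u}=-\delta(u)$ from Lemma~\ref{l delta(1) anti-symmetric unitary} --- both are valid and of comparable length.
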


\begin{proof} Lemma \ref{l unital abelian JB*-triple} shows
that $E$ is a commutative  C$^*$-algebra with product and
involution given by  $a\circ_u b := \J aub$ and $a^{\sharp_u} := \J uau$ ($a,b\in E$),
respectively. By the proof of  Lemma \ref{l 3.4.3} (see also Corollary \ref{c 3.4.6}) every ternary
derivation $\delta: E\to E^*$ may be written in the form $\delta =
-\frac{1}{2}\delta(u,\delta(u))$. Given $a,b$ in $E$, since $\{uba\}^{\sharp_u}=\{uab\}$,  we have
$$\delta (a) (b) =  -\frac{1}{2}\delta(u,\delta(u))  (a) (b) = -\frac{1}{2} \J {u}{\delta (u)}{a} (b)
+ \frac{1}{2} \J {\delta (u)}{u}{a} (b)$$ $$= \frac12 \left(
\delta(u) \J uab - \overline{\delta(u) \J uba} \right)= \frac12
\left( \delta(u) \J uab - {\delta(u)^{\sharp_u} \J uab} \right)
$$ $$=\hbox{ (by Lemma \ref{l delta(1) anti-symmetric unitary}) } \delta(u) \J uab
=\J {\delta (u)}{u}{a} (b),$$ which proves the last
identity.\end{proof}

\begin{corollary}\label{c commutative JBW*-triples} Every commutative
JBW$^*$-triple $E$ is (isometrically JB$^*$-triple isomorphic to)
a commutative von Neumann algebra, and thus it is ternary weakly
amenable. Moreover, every ternary derivation $\delta: E\to E^*$
can be written in the form $\delta =
-\frac{1}{2}\delta(u,\delta(u))= \J {\delta (u)}{u}{.}$, where $u$
is any complete tripotent in $E$.
\end{corollary}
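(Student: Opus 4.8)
The plan is to exploit the fact that a JBW$^*$-triple, being a dual Banach space, carries an abundance of tripotents, and then to reduce everything to Corollary~\ref{l unital abelian JB*-triple b}. First I would produce a complete tripotent in $E$. Since $E$ is a JBW$^*$-triple, its closed unit ball is convex and weak$^*$-compact, so by the Krein--Milman theorem it has an extreme point $u$. By the equivalences recorded in Lemma~\ref{l unital abelian JB*-triple}, an extreme point of the unit ball of a commutative JB$^*$-triple is precisely a complete (equivalently, unitary) tripotent; hence this $u$ is a complete tripotent in $E$.

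With such a $u$ in hand, Lemma~\ref{l unital abelian JB*-triple} shows that $E$ is, isometrically, a commutative C$^*$-algebra with unit $u$, product $a\circ_u b := \J aub$ and involution $a^{\sharp_u} := \J uau$. Because the underlying Banach space of $E$ is a dual space, this commutative C$^*$-algebra is in fact a commutative von Neumann algebra, which establishes the first assertion of the corollary.

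The remaining assertions then follow at once from Corollary~\ref{l unital abelian JB*-triple b}. Since $E$ contains the complete tripotent $u$, that corollary guarantees both that $E$ is ternary weakly amenable and that every ternary derivation $\delta: E\to E^*$ admits the representation $\delta = -\frac12\,\delta(u,\delta(u)) = \J{\delta(u)}{u}{\cdot}$. As Corollary~\ref{l unital abelian JB*-triple b} applies verbatim to \emph{any} complete tripotent, this formula holds for every complete tripotent $u$ in $E$, as claimed.

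The only genuinely non-routine point is the very first step: securing a complete tripotent. Everything hinges on the dual-space structure of a JBW$^*$-triple, which furnishes a weak$^*$-compact unit ball and hence, via Krein--Milman, an extreme point, together with the identification of extreme points with complete tripotents supplied by Lemma~\ref{l unital abelian JB*-triple}. Once this is in place the statement is a direct consequence of the preceding corollary, and no further computation is needed.
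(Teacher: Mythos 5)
Your proof is correct and follows essentially the same route as the paper: Krein--Milman applied to the weak$^*$-compact unit ball yields an extreme point, Lemma~\ref{l unital abelian JB*-triple} identifies it as a complete tripotent (and makes $E$ a commutative C$^*$-algebra, hence, being a dual space, a von Neumann algebra by Sakai's theorem --- the paper delegates this first assertion to a reference), and Corollary~\ref{l unital abelian JB*-triple b} then supplies both the ternary weak amenability and the formula $\delta = -\frac{1}{2}\delta(u,\delta(u))$ for any complete tripotent $u$. No gaps.
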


\begin{proof} Let $E$ be a commutative JBW$^*$-triple.  The first assertion follows, indirectly, from
 \cite[Remark 2.7]{FriRus83} (see also \cite{FriRus82} or Lemma \ref{l unital abelian JB*-triple}). Since $E$ is a dual Banach space,
it follows from the Krein-Milman Theorem that the closed unit ball
of $E$, contains an extreme point. Lemma \ref{l unital abelian JB*-triple} above
assures that every extreme point of the unit ball of ${E}$ is a
complete tripotent in $E$. Therefore $E$ is a commutative
JB$^*$-triple containing a complete tripotent and the desired statement
follows from Corollary \ref{l unital abelian JB*-triple b}.
\end{proof}

\begin{corollary}\label{c commutative JB*-triples weakly amenable with unitary in bidual}
Let $E$ be a commutative JB$^*$-triple. Then every derivation
$\delta$ in $\mathcal{D}_t (E,E^*)$ may be written  in the form $\delta =
-\frac{1}{2}\delta(u,\delta^{**}(u))= \J {\delta^{**} (u)}{u}{.}$,
where $u$ is any complete (unitary) tripotent in $E^{**}$ and
$\delta^{**} (u)\in E^{*}$.
\end{corollary}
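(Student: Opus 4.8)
The plan is to lift $\delta$ to the bidual, apply the result already established for commutative JBW$^*$-triples, and then restrict back to $E$. First I would invoke Proposition \ref{p bidual extensions} to pass from $\delta \in \mathcal{D}_t(E, E^*)$ to its bitranspose $\delta^{**} : E^{**} \to E^{***}$, which is a weak$^*$-continuous ternary derivation satisfying $\delta^{**}(E^{**}) \subseteq E^*$. In particular, for whatever element $u \in E^{**}$ we select, $\delta^{**}(u) \in E^*$, which is one of the two assertions to be proved.

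The key structural step is the observation that $E^{**}$ is itself a commutative JBW$^*$-triple. Since $E$ is a JB$^*$-triple, $E^{**}$ is a JBW$^*$-triple whose triple product extends that of $E$ and is separately weak$^*$-continuous. Commutativity of $E$ amounts to the identity $L(a,b)L(c,d) = L(c,d)L(a,b)$ holding on $E$; fixing the arguments one at a time and using the separate weak$^*$-continuity of the triple product together with the weak$^*$-density of $E$ in $E^{**}$ (Goldstine's Theorem), exactly as in the propagation argument in the proof of Proposition \ref{p bidual extensions}, this identity extends to all of $E^{**}$. Hence $E^{**}$ is a commutative JBW$^*$-triple.

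Next I would apply Corollary \ref{c commutative JBW*-triples} to the (continuous) ternary derivation $\delta^{**} \in \mathcal{D}_t(E^{**}, E^{***})$, noting that $E^{***} = (E^{**})^*$. Since $E^{**}$ is a dual Banach space, the Krein--Milman Theorem furnishes an extreme point of its unit ball, which by Lemma \ref{l unital abelian JB*-triple} is a complete (equivalently unitary) tripotent $u \in E^{**}$. That corollary then yields, for any such $u$, the representation $\delta^{**} = -\frac{1}{2}\delta(u,\delta^{**}(u)) = \J{\delta^{**}(u)}{u}{\cdot}$ as maps on $E^{**}$, where here $\delta^{**}(u)$ plays the role of the value at $u$ of the derivation under consideration.

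Finally I would restrict to $E$. For each $a \in E \subseteq E^{**}$ we have $\delta^{**}(a) = \delta(a)$, so $\delta(a) = \J{\delta^{**}(u)}{u}{a}$ for every $a \in E$, which is precisely the claimed formula; and $\delta^{**}(u) \in E^*$ by the first step. I expect the only delicate point to be the second step—verifying that commutativity is inherited by $E^{**}$—since this must be extracted from the separate weak$^*$-continuity of the triple product rather than quoted outright; the remaining steps are a direct transcription of the JBW$^*$-triple corollary followed by restriction to $E$. It is worth emphasizing that the resulting map $-\frac{1}{2}\delta(u,\delta^{**}(u))$ is inner on $E^{**}$ but, as $u$ generally lies outside $E$, its restriction to $E$ need not be inner, which is exactly the ``almost'' weak amenability phenomenon this section addresses.
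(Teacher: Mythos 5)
Your proposal is correct and takes essentially the same route as the paper's proof: extend $\delta$ to $\delta^{**}$ via Proposition \ref{p bidual extensions}, observe that $E^{**}$ is a commutative JBW$^*$-triple by Goldstine's Theorem and the separate weak$^*$-continuity of the triple product in $E^{**}$, apply Corollary \ref{c commutative JBW*-triples}, and restrict to $E$. The only difference is expository, in that you spell out the propagation of commutativity to $E^{**}$ and the final restriction step, which the paper leaves implicit.
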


\begin{proof} Let  $\delta: E \to E^*$
be a ternary derivation. Since the triple product of $E^{**}$ is
separately weak$^*$-continuous, it follows from Goldstine's
Theorem that $E^{**}$ is a commutative JBW$^*$-triple. Proposition
\ref{p bidual extensions} guarantees that $\delta^{**} : E^{**}
\to E^{***}$ is a weak$^*$-continuous ternary derivation with
$\delta^{**} (E^{**}) \subseteq E^*$. Corollary \ref{c commutative
JBW*-triples} gives the desired statement.
\end{proof}

From now on, let $E$ be a commutative JB$^*$-triple which
is identified with $\mathcal{C}_{0}^{\mathbb{T}}
(\Lambda)$. For later use, we highlight the following properties: Let
$\mathcal{C}_{0}^{1} (\Lambda)$ denote the C$^*$-subalgebra of
$\mathcal{C}_{0} (\Lambda)$ of all $\mathbb{T}$-invariant
functions, that is,
$$\mathcal{C}_{0}^{1} (\Lambda) := \{f\in
\mathcal{C}_{0} (\Lambda) : f(t \lambda) = f(\lambda), \forall
t\in \mathbb{T}, \lambda\in \Lambda \}.$$

It is clear that for every $a,b$ in $E$ and $c$ in $\mathcal{C}_{0}^{1} (\Lambda)$,
the products $a b^*$ and $a c$ lie in $\mathcal{C}_{0}^{1} (\Lambda)$
and in $E$,  respectively.\smallskip

The mapping $$E\times E \to \mathcal{C}_{0}^{1} (\Lambda),$$
$$(a,b)\mapsto a b^*$$ is sesquilinear and positive ($a a^* \geq 0$
and $a a^* = 0\Longleftrightarrow a=0$). The products $$E\times
\mathcal{C}_{0}^{1} (\Lambda)\to E,$$ $$(a,c)\mapsto ac$$ and
$$E^*\times \mathcal{C}_{0}^{1} (\Lambda)\to E^*,$$ $$(\phi,c)
\mapsto (\phi c) (a) = \phi (ac)$$ equip $E$ and $E^*$ with a
structure of Banach $\mathcal{C}_{0}^{1} (\Lambda)$-bimodules.
We also have two mappings $E^*\times E\to \mathcal{C}_{0}^{1}
(\Lambda)^*$ and $\mathcal{C}_{0}^{1}
(\Lambda)^*\times E\to E^*$ defined by $\phi a (c) := \phi (ac)$
and $\psi a (b) = \psi (a^* b)$ ($\phi\in E^*$, $\psi \in \mathcal{C}_{0}^{1}
(\Lambda)^*$, $c\in \mathcal{C}_{0}^{1} (\Lambda)$, $a,b\in E$), respectively.\smallskip

We shall regard $E=\mathcal{C}_{0}^{\mathbb{T}}
(\Lambda)$ and $\mathcal{C}_{0}^{1} (\Lambda)$ as norm-closed JB$^*$-subtriples
of the C$^*$-algebra $A =\mathcal{C}_{0} (\Lambda)$. We shall identify
the weak$^*$ closure, in $A^{**}$, of a closed subspace
$Y$ of $A$ with $Y^{**}$. It follows from the separate weak$^*$-continuity
of the triple product in $A^{**}$, that for every $a,b$ in $E^{**}$ and $c$ in $\mathcal{C}_{0}^{1} (\Lambda)^{**}$, 
the products $a b^*$ and $a c$ lie in $\mathcal{C}_{0}^{1} (\Lambda)^{**}$
and in $E^{**}$,  respectively. Clearly the mappings defined in the previous paragraph extend
to $E^{**}\times E^{**}$ and $E^{**}\times
\mathcal{C}_{0}^{1} (\Lambda)^{**}$ and $E^*\times E^{**}$, respectively. \smallskip

One of the main consequences of the Gelfand theory for JB$^*$-triples provides
a structure theorem for the JB$^*$-subtriples generated by a single element.
Concretely speaking, for each element $a$ in a JB$^*$-triple $F$,
the JB$^*$-subtriple of $F$ generated
by the element $a$, $F_a$, is JB$^*$-triple isomorphic (and hence
isometric) to $C_0 (L)$ for some locally compact Hausdorff space
$L$ contained in $(0,\|a\|],$ such that $L\cup \{0\}$ is compact,
where $C_0 (L)$ denotes the Banach space of all complex-valued
continuous functions vanishing at $0.$ It is also known that there
exists a triple isomorphism  $\Psi$ from $F_a$ onto $C_{0}(L),$
such that $\Psi (a) (t) = t$ $(t\in L)$ (cf. \cite[Corollary
1.15]{Ka}). Consequently, for each element $a$ in $F$ there exists a (unique) element
$b\in F_a$ satisfying $\J bbb = a$. This element $b$ is usually called the
\emph{cube root} of $a$.\smallskip

The following proposition shows that, in the parlance of TROs (ternary rings of operators \cite{KauRua02}), 
$\mathcal{C}_{0}^{1} (\Lambda)$ identifies with the left and right linking $C^*$-algebras of the TRO $ \mathcal{C}_{0}^{\mathbb{T}}
(\Lambda)$. The proof is included here for completeness reasons.

\begin{proposition}\label{prop surjectivity} Let $E=\mathcal{C}_{0}^{\mathbb{T}}
(\Lambda)$ be a commutative JB$^*$-triple. The norm-closed
linear span of the set $$ \overline{E}\cdot E := \{ a^* b : a,b\in
E\}$$ coincides with $\mathcal{C}_{0}^{1} (\Lambda)$. 
\end{proposition}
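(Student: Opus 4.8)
The plan is to establish the nontrivial inclusion $\mathcal{C}_0^1(\Lambda)\subseteq \overline{\mathrm{span}}\{a^*b : a,b\in E\}$, since the reverse inclusion is the pointwise identity $(a^*b)(t\lambda)=\overline{t}\,t\,(a^*b)(\lambda)=(a^*b)(\lambda)$ essentially recorded before the statement. Write $S$ for the closed linear span of $\{a^*b:a,b\in E\}$, and regard $B:=\mathcal{C}_0^1(\Lambda)$ as a commutative C$^*$-algebra with Gelfand spectrum $\Omega$ (concretely the orbit space $\Lambda/\mathbb{T}$), so that $B\cong\mathcal{C}_0(\Omega)$. The strategy is not to argue that $S$ is a subalgebra and invoke Stone--Weierstrass, but rather to exhibit inside $S$ a closed ideal of $B$ and then show that ideal is all of $B$.

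First I would observe that $S$ contains a whole principal ideal attached to each element of $E$. Fix $a\in E$ and $c\in\mathcal{C}_0^1(\Lambda)$; since $E\cdot\mathcal{C}_0^1(\Lambda)\subseteq E$ (recorded just before the statement), the element $ca$ lies in $E$, whence
$$a^*(ca)=\overline{a}\,c\,a=c\,|a|^2\in S.$$
Thus $\{c\,|a|^2:c\in B\}\subseteq S$, and the closure of this set is the closed ideal of $B\cong\mathcal{C}_0(\Omega)$ generated by the positive element $a^*a=|a|^2$, namely $\mathcal{C}_0(U_a)$ with $U_a:=\{\omega\in\Omega:|a|^2(\omega)\neq0\}$. (This is the usual description of principal ideals in a commutative C$^*$-algebra: for $f\in\mathcal{C}_c(U_a)$ the quotient $f/|a|^2$ extends continuously by zero and lies in $B$, so $f=|a|^2\cdot(f/|a|^2)$, and $\mathcal{C}_c(U_a)$ is dense in $\mathcal{C}_0(U_a)$.) Summing over $a$ and closing gives $S\supseteq\overline{\sum_{a\in E}\mathcal{C}_0(U_a)}=\mathcal{C}_0\big(\bigcup_{a\in E}U_a\big)$.

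It then remains to prove the nondegeneracy statement $\bigcup_{a\in E}U_a=\Omega$, equivalently that the positive elements $\{a^*a:a\in E\}$ have no common zero. This is the only place where the fine structure of $E$ is used: because $E=\mathcal{C}_0^{\mathbb{T}}(\Lambda)$ consists of \emph{all} continuous $\mathbb{T}$-equivariant functions vanishing at infinity, and the $\mathbb{T}$-action on $\Lambda$ is free (and locally trivial, $\Lambda$ being a principal $\mathbb{T}$-bundle), for each $\lambda_0\in\Lambda$ one builds, using a local section together with Urysohn's lemma on the orbit space, an element $a\in E$ with $a(\lambda_0)\neq0$; then $|a|^2$ does not vanish at the orbit of $\lambda_0$. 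Hence $\bigcup_a U_a=\Omega$ and $S\supseteq\mathcal{C}_0(\Omega)=\mathcal{C}_0^1(\Lambda)$, yielding equality. The hard part will be exactly this nondegeneracy: one must be certain that the representation $E\cong\mathcal{C}_0^{\mathbb{T}}(\Lambda)$ leaves no point of $\Lambda$ at which every function of $E$ vanishes, a structural feature of the Friedman--Russo Gelfand representation rather than a formal consequence of the module identities.
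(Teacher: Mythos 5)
Your argument is correct in substance but takes a genuinely different route from the paper's. The paper shows that the closed linear span $B$ of $\overline{E}\cdot E$ is a norm-closed $^*$-subalgebra of $\mathcal{C}_{0}^{1}(\Lambda)\cong\mathcal{C}_{0}(\Lambda/\mathbb{T})$ (via $(a^*b)(c^*d)=a^*(bc^*d)$ with $bc^*d\in E$), then proves that $B$ separates the points of $\Lambda/\mathbb{T}$ and vanishes nowhere, and concludes by Stone--Weierstrass. You instead exploit the module identity $E\cdot\mathcal{C}_{0}^{1}(\Lambda)\subseteq E$ to get $a^*(ca)=c\,|a|^2$ inside the span, so that the span contains the closed principal ideal $\mathcal{C}_{0}(U_a)$ for each cozero set $U_a$ of $|a|^2$; since closed ideals of $\mathcal{C}_{0}(\Omega)$ are exactly the spaces $\mathcal{C}_{0}(U)$ with $U$ open, everything reduces to nonvanishing and \emph{no separation argument is needed at all}. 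That is a genuine economy: the separation step is the least transparent part of the paper's proof (it routes through cube roots of elements of $E$), and your ideal-theoretic argument dispenses with it while still using only facts the paper records before the statement. Your verifications of the principal-ideal description (extending $f/|a|^2$ by zero for $f\in\mathcal{C}_c(U_a)$) and of the identity $\overline{\sum_a \mathcal{C}_{0}(U_a)}=\mathcal{C}_{0}(\bigcup_a U_a)$ are sound.

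The one step you should shore up is the nondegeneracy. You appeal to a local cross-section, asserting local triviality because ``$\Lambda$ is a principal $\mathbb{T}$-bundle''; but in this paper a principal $\mathbb{T}$-bundle is merely a locally compact Hausdorff space with a free continuous $\mathbb{T}$-action --- local triviality is \emph{not} part of the definition. It is in fact automatic (by Gleason's theorem, a free action of a compact Lie group on a completely regular space admits local cross-sections), but invoking that is heavier than necessary, and as written your sketch has an unjustified hypothesis. The paper's own device is elementary and slots directly into your proof: for $\lambda_0\in\Lambda$ the map $t\mapsto t\lambda_0$ is a homeomorphism of $\mathbb{T}$ onto the orbit (freeness plus compactness of $\mathbb{T}$), so $t\lambda_0\mapsto t$ is a well-defined continuous function on the orbit; extend it by Tietze to $f\in\mathcal{C}_c(\Lambda)$ and average against Haar measure, $a(\lambda):=\int_{\mathbb{T}} t^{-1} f(t\lambda)\,d\mu(t)$. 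Then $a$ is $\mathbb{T}$-equivariant, hence lies in $E$, and $a(\lambda_0)=\int_{\mathbb{T}} t^{-1}\,t\,d\mu(t)=1$, so $|a|^2$ is nonzero at the orbit of $\lambda_0$. (Note in passing that the extension must satisfy $f(t\lambda_0)=t$, not $f(t\lambda_0)=1$, on the orbit --- with the constant choice the average is $0$; the paper's wording at this point needs the same correction.) With this replacement, $\bigcup_a U_a=\Lambda/\mathbb{T}$ follows from the representation alone, without any bundle-triviality input, and your proof is complete.
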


\begin{proof} Let $B$ denote the norm-closed linear span of the set
$\overline{E}\cdot E.$ Given $a,b,c$ and $d$ in $E$
the product $bc^*d$ lies in $E$ and hence $(a^* b) (c^* d)= a^*(bc^*d)$ belongs to $\overline{E}\cdot E$.
Thus, $ \overline{E}\cdot E$ is multiplicatively closed and clearly self-adjoint (i.e. $\left(\overline{E}\cdot E\right)^*
=  \overline{E}\cdot E)$. We deduce that $B$ is a norm-closed *-subalgebra of $\mathcal{C}_{0}^{1} (\Lambda)$.\smallskip

We observe that $\mathcal{C}_{0}^{1} (\Lambda)$ is triple isometrically isomorphic to $\mathcal{C}_{0} (\Lambda/\mathbb{T})$
(via the canonical identification $c\mapsto \widehat{c}$, where $\widehat{c} (\lambda+\mathbb{T}) := c(\lambda),$ for every
$c\in  \mathcal{C}_{0}^{1} (\Lambda)$, $\lambda \in \Lambda$). We shall identify $\mathcal{C}_{0}^{1} (\Lambda)$ and
$\mathcal{C}_{0} (\Lambda/\mathbb{T})$. We claim that, under this identification, $B$ is a norm-closed *-subalgebra
of $\mathcal{C}_{0} (\Lambda/\mathbb{T})$, separates the points of $\Lambda/\mathbb{T}$ and vanishes nowhere.\smallskip

To this end, we claim first that $E$ separates the points of $\Lambda$ and vanishes nowhere, that is,
given $\lambda\in \Lambda$ there exists $a\in E$ 
with $a(\lambda)=1$. By Urysohn's lemma, there exists $f\in \mathcal{C}_{0} (\Lambda)$ satisfying $f(t\lambda ) =1,$ 
for every $t\in \mathbb{T}$. Let $d\mu$ denote the unit Haar measure on $\mathbb{T}$, the assignment 
$g\mapsto \pi (g) (\lambda) :=\int_{\mathbb{T}} t^{-1} g(t \lambda) d\mu(t)$ defines a contractive projection on $\mathcal{C}_{0} (\Lambda)$ 
whose image coincides with $E$. It is clear that $a=\pi (f)\in E$ and $a(\lambda)= \pi (f) (\lambda) = 1$. Take $\lambda_1\neq \lambda_2$ in $\Lambda$. We may assume that $\lambda_1 +\mathbb{T} \neq \lambda_2 +\mathbb{T}$, that is, the orbits of $\lambda_1$ and $\lambda_2$ are two compact disjoint subsets of $\Lambda$. Applying Urysohn's lemma, we find an element $f\in \mathcal{C}_{0} (\Lambda)$ satisfying $f(t\lambda_1 ) =1$ and $f(t\lambda_2 ) =0$,
for every $t\in \mathbb{T}$. The element $a= \pi (f)$ satisfies $a(\lambda_1)=1$ and $a(\lambda_2)=0.$\smallskip

Let us now  take $\lambda_1 + \mathbb{T}\neq \lambda_2 + \mathbb{T}$ in $\Lambda/\mathbb{T}$. Suppose that
$a^*b (\lambda_1) = a^*b (\lambda_2)$, for every $a,b\in E$. In particular, $a^*a (\lambda_1) = a^*a (\lambda_2)$,
and hence $aa^*a (\lambda_1) = aa^*a (\lambda_2)$, for every $a\in E$. Since every element $b$ in $E$
admits a cube root $a\in E_b$ satisfying $\J aaa =b$, we deduce that $b (\lambda_1) = b (\lambda_2)$, for every $b\in E$,
which is impossible because $\lambda_1 \neq \lambda_2$.
By the same argument, $B$ vanishes nowhere, so  the Stone-Weierstrass theorem assures
that $B= \mathcal{C}_{0}^{1} (\Lambda)$.
\end{proof}

\begin{theorem}\label{t commutative JB*-triples almost weakly amenable}
Every commutative (real or complex) JB$^*$-triple $E$ is almost ternary weakly
amenable, that is, $\mathcal{I}nn_{t} (E,E^*)$ is a norm-dense subset of
$\mathcal{D}_t (E,E^*)$.
\end{theorem}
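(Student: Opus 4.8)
The plan is to reduce an arbitrary $\delta\in\mathcal{D}_t(E,E^*)$ to the ``unital'' form afforded by the bidual, and then to replace the (generally non-existent) unit of $E$ by a bounded net drawn from $E$ itself, controlling the error in the \emph{operator} norm by exploiting that the whole derivation is encoded in the single functional $\psi:=\delta^{**}(u)\in E^{*}$. By Proposition~\ref{p complexification} it suffices to treat the complex case, so I fix a commutative complex JB$^*$-triple $E\cong\mathcal{C}_0^{\mathbb{T}}(\Lambda)$ and a $\delta\in\mathcal{D}_t(E,E^*)$. Applying Corollary~\ref{c commutative JB*-triples weakly amenable with unitary in bidual} I would write $\delta=\J{\psi}{u}{\cdot}$, where $u$ is a complete (unitary) tripotent of the commutative von Neumann algebra $E^{**}$ and $\psi=\delta^{**}(u)\in E^{*}$. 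Since $\delta=-\tfrac12\delta(u,\psi)$, the definition of an inner derivation gives $\J{u}{\psi}{a}=-\J{\psi}{u}{a}$; equivalently this is the antisymmetry $\psi^{\sharp_u}=-\psi$ of Lemma~\ref{l delta(1) anti-symmetric unitary}. The heuristic is then transparent: were some $v\in E$ equal to $u$, the map $-\tfrac12\delta(v,\psi)$ would be a genuine element of $\mathcal{I}nn_t(E,E^*)$ coinciding with $\delta$; the task is to achieve this approximately.

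Next I would produce a net $(v_\nu)$ in $E$ with $\|v_\nu\|\le 1$ and set $\delta_\nu:=-\tfrac12\delta(v_\nu,\psi)\in\mathcal{I}nn_t(E,E^*)$. Expanding $\delta_\nu(a)=\tfrac12\bigl(\J{\psi}{v_\nu}{a}-\J{v_\nu}{\psi}{a}\bigr)$ and using the dual--module formulas $(\ref{eq module product dual 1})$ and $(\ref{eq module product dual 2})$ together with $\J{u}{\psi}{a}=-\J{\psi}{u}{a}$, a short commutative computation in $E^{**}$ (where $\J pqr=p\,q^{\sharp_u}\,r$) collapses the difference $\delta(a)-\delta_\nu(a)$, evaluated at $x\in E$, into expressions of the form $\psi\bigl((u-v_\nu)\,a^{\sharp_u}x\bigr)$ and $\overline{\psi\bigl((u-v_\nu)\,x^{\sharp_u}a\bigr)}$. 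Here Proposition~\ref{prop surjectivity} is used to know that the products $a^{\sharp_u}x$ and $x^{\sharp_u}a$ lie in $\mathcal{C}_0^{1}(\Lambda)^{**}$, so that the module action of such coefficients on $E^*$ is legitimate. Taking the supremum over $\|a\|,\|x\|\le 1$ therefore reduces everything to the single estimate
$$\|\psi v_\nu-\psi u\|=\sup_{\|c\|\le 1,\ c\in\mathcal{C}_0^{1}(\Lambda)}\bigl|\psi\bigl((v_\nu-u)\,c\bigr)\bigr|\longrightarrow 0,$$
where $\psi v_\nu,\psi u\in\mathcal{C}_0^{1}(\Lambda)^{*}$.

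The genuinely substantive step is the existence of such a net, and this is where the \emph{norm} topology is gained rather than the merely pointwise convergence of the classical Barton--Friedman type approximations. The idea is that $\psi$ is a \emph{fixed} functional: through the Gelfand picture $\mathcal{C}_0^{1}(\Lambda)\cong\mathcal{C}_0(\Lambda/\mathbb{T})$ the quantity above is controlled by the $L^{1}$-norm of $u-v_\nu$ against the finite regular measure determined by $\psi$. Since $u$ is a bounded equivariant unimodular element and the continuous equivariant functions vanishing at infinity (i.e.\ $E$) are dense in $L^{1}$ of that measure, I can choose $v_\nu\in E$ with $\|v_\nu\|\le\|u\|=1$, $v_\nu\to u$ in $L^{1}(|\psi|)$, and (multiplying by a bounded approximate identity $g_\nu$ of $\mathcal{C}_0^{1}(\Lambda)$, using finiteness of $|\psi|$ to discard the tail) vanishing at infinity; a dominated-convergence argument then yields the displayed limit. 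I expect the main obstacle to be precisely this construction: assembling the approximating net in $E$ and verifying the $L^{1}$/regularity estimate while respecting the $\mathbb{T}$-equivariance, this being the one place where one must pass from weak$^*$ to norm control. It is also exactly the reason the conclusion is only density and not exactness, since in general $u\notin E$, so $\delta$ itself need not be inner. Finally, $\|\delta-\delta_\nu\|\to 0$ gives the asserted norm-density of $\mathcal{I}nn_t(E,E^*)$ in $\mathcal{D}_t(E,E^*)$.
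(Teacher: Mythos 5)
Your first half coincides with the paper's: the reduction to the complex case, the use of Corollary \ref{c commutative JB*-triples weakly amenable with unitary in bidual} to write $\delta(a)(b)=-\tfrac12\bigl(\overline{\psi(ub^*a)}-\psi(ua^*b)\bigr)$ with $\psi=\delta^{**}(u)\in E^*$, and your algebraic reduction of $\bigl\|\delta+\tfrac12\delta(v_\nu,\psi)\bigr\|$ to $\sup\{|\psi((u-v_\nu)c)| : c\in\mathcal{C}_0^{1}(\Lambda),\ \|c\|\le 1\}$ are all correct (one small slip: the memberships $a^*x,\,x^*a\in\mathcal{C}_0^{1}(\Lambda)$ for $a,x\in E$ are the elementary observation preceding Proposition \ref{prop surjectivity}, not that proposition itself). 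From there you genuinely diverge from the paper, which never approximates $u$ at all: it applies Cohen's factorisation theorem to the functional $c\mapsto\psi(uc)$ on the C$^*$-algebra $\mathcal{C}_0^{1}(\Lambda)$ to write $\psi(uc)=\varphi(dc)$ with $d\in\mathcal{C}_0^{1}(\Lambda)$, then approximates $d$ in norm by $\sum_j y_j^*x_j$ via Proposition \ref{prop surjectivity} (Stone--Weierstrass), ending with the finite sums $-\tfrac12\sum_j\delta(x_j,\varphi y_j^*)$ of inner derivations. You instead keep $\psi$ fixed and try to approximate the bidual unitary $u$ itself by a bounded net in $E$.

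Your justification of the one substantive estimate, however, has a genuine gap. First, $u$ is an abstract element of $E^{**}\subseteq\mathcal{C}_0(\Lambda)^{**}$, not a function on $\Lambda$: to make sense of ``$v_\nu\to u$ in $L^{1}(|\psi|)$'' you must fix a Hahn--Banach extension of $\psi$ to $\mathcal{C}_0(\Lambda)$, represent it by a regular measure $\mu$, and pass to the normal $^*$-homomorphism $\rho:\mathcal{C}_0(\Lambda)^{**}\to L^{\infty}(|\mu|)$, under which $\psi((u-v)c)=\int\rho(u-v)\,c\,h\,d|\mu|$ with $h=d\mu/d|\mu|$ unimodular. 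Second, and more seriously, the density claim you invoke is false: $E$ need not be dense in $L^{1}(|\mu|)$ --- for $\Lambda=\mathbb{T}$ acting on itself one has $E=\mathcal{C}^{\mathbb{T}}(\mathbb{T})=\CC\cdot\mathrm{id}$, one-dimensional --- and dominated convergence gives nothing for a net with no dominating function. Fortunately no density is needed: by Goldstine take $(w_\lambda)$ in the closed unit ball of $E$ with $w_\lambda\to u$ weak$^*$ (the paper's identification of $E^{**}$ with the weak$^*$ closure of $E$ in $\mathcal{C}_0(\Lambda)^{**}$ makes the topologies compatible); since $\rho$ is weak$^*$-continuous, $\rho(w_\lambda)\to\rho(u)$ in $\sigma(L^{\infty}(|\mu|),L^{1}(|\mu|))$, hence weakly in $L^{1}(|\mu|)$ because $|\mu|$ is finite; by Mazur's theorem and convexity of the ball there exist $v_n$ in the unit ball of $E$ with $\|\rho(v_n)-\rho(u)\|_{L^{1}(|\mu|)}\to 0$, which is exactly your displayed limit. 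With this repair your proof is complete, avoids both Cohen factorisation and Proposition \ref{prop surjectivity}, and even sharpens the theorem slightly: every $\delta\in\mathcal{D}_t(E,E^*)$ is a norm limit of the one-generator inner derivations $-\tfrac12\delta(v_n,\psi)$ with $\psi$ fixed, whereas the paper produces finite sums of such terms.
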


\begin{proof} By Proposition \ref{p complexification}, we may assume that $E$
is a commutative complex JB$^*$-triple. We write $E =\mathcal{C}_0^{\mathbb{T}} (\Lambda (E))$
and $A =\mathcal{C}_{0} (\Lambda (E))$.
Let $\delta: E \to E^*$ be a ternary derivation. By
Corollary \ref{c commutative JB*-triples weakly amenable with
unitary in bidual}, $\delta^{**}=
-\frac{1}{2}\delta(u,\delta^{**}(u)) ,$ where $u$ is
a unitary in $E^{**}\subseteq A^{**}$ and $\psi=\delta^{**} (u) \in E^*$. In this case
$$\delta (a) (b) = -\frac12 \left(\overline{\psi (u b^* a)}- \psi (u a^* b)
\right),$$ for every $a,b\in E$, where the products are taken in
$\mathcal{C}_{0} (\Lambda (E))^{**}$.\smallskip

The mapping $c\mapsto \psi (u c)$ defines a functional in the dual
of $\mathcal{C}_{0}^{1} (\Lambda (E))$. Since the latter is a
C$^*$-algebra, by Cohen's factorisation theorem (cf. \cite[Theorem
VIII.32.22]{HewRoss}), there exist $\varphi\in \mathcal{C}_{0}^{1}
(\Lambda (E))^*$ and $d\in \mathcal{C}_{0}^{1} (\Lambda (E))$ such
that $\psi (u c) = \varphi (d c),$ for every $c\in
\mathcal{C}_{0}^{1} (\Lambda (E))$. Therefore, for each $a,b\in E$ we have
$$\delta  (a) (b) =
-\frac12 \left(\overline{\psi (u b^* a)}- \psi (u a^* b) \right) =
-\frac12 \left(\overline{\varphi (d b^* a)}- \varphi (d a^* b)
\right).$$\smallskip

 Given $\varepsilon >0$, by Proposition \ref{prop surjectivity}
there exist $x_1,y_1, \ldots, x_n,y_n\in E$ satisfying $\| d -\sum_{j=1}^{n} y_j^* x_j \|
<\varepsilon$. Let $\phi_j = \varphi y_j^*\in E^*$ ($j=1,\ldots , n$).
The sum $-\frac12 \sum_{j=1}^{n} \delta(x_j,\phi_j)$ defines a inner ternary
derivation from $E$ to $E^*$. Given $a,b\in E$, we have:
$$\left|\delta (a)(b) +\frac{1}{2} \sum_{j=1}^{n} \delta(x_j,\phi_j) (a) (b) \right| $$ $$= \left| -\frac12 \left(\overline{\varphi (d b^* a)}- \varphi (d a^* b)
\right) +\frac12 \sum_{j=1}^{n}  \left(\overline{\phi_j (x_j b^* a)}- \phi_j (x_j a^* b) \right)\right|$$
$$= \left| -\frac12 \left(\overline{\varphi (d b^* a)}- \varphi (d a^* b)
\right) +\frac12 \sum_{j=1}^{n}  \left(\overline{\varphi (y_j^* x_j b^* a)}- \varphi (y_j^* x_j a^* b) \right)\right|$$
$$= \left| \frac12 \ \overline{\varphi \left( \left(\sum_{j=1}^{n} y_j^* x_j-d \right) b^* a\right)} + \frac12 \ {\varphi \left( \left(d-\sum_{j=1}^{n}  y_j^* x_j \right) a^* b \right)}\right|$$ $$\leq\frac{1}{2} \|\varphi\| \ \|a\| \ \|b\| \ \left\| d -\sum_{j=1}^{n} y_j^* x_j \right\| < \varepsilon\|\varphi\| \ \|a\| \ \|b\|/2.$$
Thus,  $\left\|\delta-(-\frac{1}{2} \sum_{j=1}^{n} \delta(x_j,\phi_j))\right\|< \varepsilon\|\varphi\|/2.$
\end{proof}

The proof given in the above theorem shows that under additional
hypothesis on the set $E\cdot \overline{E} := \{ a b^* : a,b\in
E\}$ a commutative JB$^*$-triple $E$ is ternary weakly amenable.

\begin{corollary}\label{c commutative JB*-triples weakly amenable + surjectivity}
Let $E=\mathcal{C}_{0}^{\mathbb{T}} (\Lambda (E))$ be a
commutative JB$^*$-triple. Suppose that the linear span of the set
$E\cdot \overline{E} := \{ a b^* : a,b\in E\}$ coincides with
$\mathcal{C}_{0}^{1} (\Lambda (E))$. Then $E$ is ternary weakly
amenable.$\hfill\Box$
\end{corollary}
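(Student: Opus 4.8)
The plan is to rerun the proof of Theorem~\ref{t commutative JB*-triples almost weakly amenable} essentially verbatim, observing that the standing hypothesis upgrades the final approximation step into an exact identity. By Proposition~\ref{p complexification} I may assume $E$ is complex and write $E=\mathcal{C}_0^{\mathbb{T}}(\Lambda(E))$ and $A=\mathcal{C}_0(\Lambda(E))$. Given $\delta\in\mathcal{D}_t(E,E^*)$, I would first apply Corollary~\ref{c commutative JB*-triples weakly amenable with unitary in bidual} to obtain a unitary tripotent $u\in E^{**}$ and $\psi:=\delta^{**}(u)\in E^*$ with $\delta^{**}=-\frac12\delta(u,\delta^{**}(u))$, so that
\[
\delta(a)(b)=-\frac12\left(\overline{\psi(ub^*a)}-\psi(ua^*b)\right),\qquad a,b\in E,
\]
the products being taken in $A^{**}$. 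Exactly as in the theorem, applying Cohen's factorisation theorem to the bounded functional $c\mapsto\psi(uc)$ on the C$^*$-algebra $\mathcal{C}_0^1(\Lambda(E))$ produces $\varphi\in\mathcal{C}_0^1(\Lambda(E))^*$ and $d\in\mathcal{C}_0^1(\Lambda(E))$ with $\psi(uc)=\varphi(dc)$, whence $\delta(a)(b)=-\frac12(\overline{\varphi(db^*a)}-\varphi(da^*b))$.

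The one new ingredient is to factor $d$ without an error term. Here I would invoke the hypothesis that the (algebraic) linear span of $E\cdot\overline{E}=\{ab^*:a,b\in E\}$ equals $\mathcal{C}_0^1(\Lambda(E))$. Because the underlying algebra is commutative, $ab^*$ and $b^*a$ coincide as functions, so $E\cdot\overline{E}=\overline{E}\cdot E$ as subsets of $\mathcal{C}_0^1(\Lambda(E))$; in particular $d$ lies in the linear span of $\overline{E}\cdot E$ and can be written as a genuine finite sum $d=\sum_{j=1}^n y_j^*x_j$ with $x_j,y_j\in E$ and no remainder. Setting $\phi_j:=\varphi y_j^*\in E^*$, so that $\phi_j(z)=\varphi(y_j^*z)$, the final estimate of the theorem now holds with $\varepsilon=0$ and becomes an equality:
\[
\delta(a)(b)=-\frac12\sum_{j=1}^n\left(\overline{\phi_j(x_jb^*a)}-\phi_j(x_ja^*b)\right)=-\frac12\sum_{j=1}^n\delta(x_j,\phi_j)(a)(b)
\]
for all $a,b\in E$. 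Consequently $\delta=-\frac12\sum_{j=1}^n\delta(x_j,\phi_j)\in\mathcal{I}nn_t(E,E^*)$, proving ternary weak amenability.

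Since the analytic content is already carried by the theorem, there is no serious obstacle; the only point requiring care is the bookkeeping that converts the norm approximation into equality. Concretely, I would verify that the element $d$ delivered by Cohen's theorem genuinely belongs to $\mathcal{C}_0^1(\Lambda(E))$ (so that the hypothesis applies to it), and that the set identity $E\cdot\overline{E}=\overline{E}\cdot E$ is precisely what is needed to match the form $\sum_j y_j^*x_j$ occurring in the derivation formula. A final routine check confirms that each $\delta(x_j,\phi_j)$ is the inner derivation built from $x_j\in E$ and $\phi_j\in E^*$ via the module products on $E^*$, so that the displayed finite sum indeed lies in $\mathcal{I}nn_t(E,E^*)$.
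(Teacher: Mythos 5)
Your proposal is correct and coincides with the paper's own (implicit) proof: the corollary is stated with a $\Box$ precisely because the argument is the proof of Theorem~\ref{t commutative JB*-triples almost weakly amenable} rerun verbatim, with the hypothesis (plus the commutativity identity $ab^*=b^*a$, so that $E\cdot\overline{E}=\overline{E}\cdot E$) supplying an exact factorization $d=\sum_{j=1}^n y_j^* x_j$ that makes the final $\varepsilon$-estimate an equality. Your two bookkeeping checks (that Cohen's theorem yields $d\in\mathcal{C}_0^1(\Lambda(E))$, and that scalars in the linear span can be absorbed into the factors) are exactly the right points of care, and both go through.
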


The question clearly is whether the additional hypothesis in
Corollary \ref{c commutative JB*-triples weakly amenable +
surjectivity} is automatically satisfied for every commutative
JB$^*$-triple $E$. We do not know the answer, the best result we
could obtain in this line is Proposition \ref{prop surjectivity}.\smallskip

Related to this topic, we can say that given a commutative
JB$^*$-triple $E=\mathcal{C}_{0}^{\mathbb{T}} (\Lambda (E))$, the
mapping $E\times E \to \mathcal{C}_{0}^{1} (\Lambda (E)),$
$(a,b)\mapsto a b^*$ need not be, in general, surjective.
Indeed, let $L$ be a locally compact Hausdorff space. We shall say that
$L$ is a locally compact \emph{principal $\mathbb{T}$-bundle} if
there exists a continuous mapping $\mathbb{T}\times L \to L$,
$(t,\lambda)\mapsto t  \lambda$ satisfying $s(t \lambda) = (st)
\lambda$ and $1 \lambda = \lambda$, for every $s,t\in \mathbb{T}$,
$\lambda\in L$. We write $$\mathcal{C}_{0}^{\mathbb{T}} (L) :=\{
f\in \mathcal{C}_{0} (L) : f(t \ \lambda) = t \ f(\lambda)
\ (t\in\mathbb{T}, \lambda\in L)\}.$$ It
is known that $\mathcal{C}_{0}^{\mathbb{T}} (L)$ is isometrically
isomorphic to to $C_0 (L^{\prime})$ for some locally compact
$L^{\prime}$ if and only if $L$ is a trivial $\mathbb{T}$-bundle,
i.e. $L/\mathbb{T} \times \mathbb{T} \cong L$ (cf. \cite[Corollary
1.13]{Ka}). The set $S:= \{ z\in \mathbb{C}^{n+1} : \|z\|_2= 1\}$
is compact and a non-trivial principal $\mathbb{T}$-bundle. Let
$E= \mathcal{C}^{\mathbb{T}}(S)\subset \mathcal{C}(S)$ and
$\mathcal{C}^{1} (S) :=\{f\in\mathcal{C} (S) : f(t \ z) = f(z) \ (t\in\mathbb{T}, z\in S)\}.$
We can obviously identify $S$ with a closed subset of $\Lambda (E)$ which satisfies $\mathbb{T} S =S$.\smallskip

If the mapping $E\times E \to \mathcal{C}_{0}^{1} (\Lambda (E)),$
$(a,b)\mapsto a b^*$ were surjective, then, applying Urysohn's lemma,
there would exist functions $a,b\in E$ satisfying $a b^* = v$,
where ${v}\in \mathcal{C}_{0}^{1} (\Lambda (E))\cong \mathcal{C}_{0} (\Lambda (E)/\mathbb{T})$
is a function satisfying $v (z)=1$, for every $z\in S$.
In this case the function $z\mapsto u(z) = \frac{a(z)}{|a(z)|}$ $(z\in S),$
would be a unitary element in $E$, and hence, by Lemma \ref{l unital abelian
JB*-triple}, $E$ would be an abelian C$^*$-algebra, which is
impossible because $S$ is a non-trivial principal $\mathbb{T}$-bundle.\smallskip

\bigskip

\end{document}